\documentclass[letterpaper,12pt]{conm-p-l}

\usepackage{textcomp} 
\usepackage{graphics} 
\usepackage{amssymb} 
\usepackage{mathtools}
\usepackage{thmtools} 
\usepackage{mathrsfs} 
\usepackage{MnSymbol} 
\usepackage{extpfeil} 
\usepackage{verbatim} 
\usepackage{longtable} 
\usepackage{bigstrut} 
\usepackage[table]{xcolor} 
\usepackage{afterpage} 
\usepackage{xparse} 
\usepackage{xfrac} 
\usepackage{mathscinet} 
\usepackage{adjustbox} 
\usepackage[obeyDraft]{todonotes} 

\usepackage[final,                  
            pdftex,                 
            pdfpagelabels,
            pdfstartview = {FitH},  
            bookmarks,              
            pdfpagemode=UseNone,    
            colorlinks,             
            plainpages = false,     
            linktoc=all,            
            linkcolor=blue,         
            citecolor=red         
            ]{hyperref}
            
\hypersetup{pdftitle = {Some Examples of the p-Canonical Basis},
            pdfauthor = {Lars Thorge Jensen, Geordie Williamson},
            pdfkeywords = {Hecke Algebra} {Canonical Basis} {Categorification} {Parity Sheaves},
            pdfdisplaydoctitle      
            }

\usepackage[arrow, matrix, curve]{xy} 
\usepackage{tikz} 
\usetikzlibrary{arrows, fit, calc, shapes.misc, shapes.geometric, decorations.markings}
\usepackage{pgfplots}             
            
\allowdisplaybreaks 
            
\declaretheoremstyle[
spaceabove=4pt, spacebelow=4pt,
headfont=\normalfont\bfseries,
notefont=\mdseries, notebraces={(}{)},
bodyfont=\itshape
]{plain}

\declaretheoremstyle[
spaceabove=4pt, spacebelow=4pt,
headfont=\normalfont\bfseries,
notefont=\mdseries, notebraces={(}{)},
bodyfont=\itshape,
postheadspace=\newline
]{break}


\declaretheorem[title=Theorem, style=plain, numberwithin=section]{thm}
\declaretheorem[title=Proposition, style=plain, numberlike=thm]{prop}
\declaretheorem[title=Lemma, style=plain, numberlike=thm]{lem}

\declaretheorem[title=Assumption, style=plain, numberlike=thm]{ass}

\declaretheorem[title=Theorem, style=break, numberlike=thm]{thmlab}

\declaretheorem[title=Definition, style=definition, numberlike=thm]{defn}

\declaretheorem[title=Remark, style=remark, numberlike=thm]{remark}

\usepackage{cleveref} 
                          
\crefname{thm}{Theorem}{Theorems}
\crefname{prop}{Proposition}{Propositions}
\crefname{lem}{Lemma}{Lemmata}
\crefname{cor}{Corollary}{Corollaries}
\crefname{rem}{Reminder}{Reminders}
\crefname{defn}{Definition}{Definitions}

\crefname{thmlab}{Theorem}{Theorems}
\crefname{proplab}{Proposition}{Propositions}
\crefname{lemlab}{Lemma}{Lemmata}
\crefname{corlab}{Corollary}{Corollaries}
\crefname{remlab}{Reminder}{Reminders}
\crefname{conj}{Conjecture}{Conjectures}

\crefname{thmreflab}{Theorem}{Theorems}
\crefname{propreflab}{Proposition}{Propositions}
\crefname{lemreflab}{Lemma}{Lemmata}
\crefname{correflab}{Corollary}{Corollaries}
\crefname{remreflab}{Reminder}{Reminders}
\crefname{conjref}{Conjecture}{Conjectures}

\crefname{remark}{Remark}{Remarks}
\crefname{claim}{Claim}{Claims}
\crefname{ex}{Example}{Examples}

\crefname{figure}{Figure}{Figures}

\CompileMatrices


\entrymodifiers={+!!<0pt,\fontdimen22\textfont2>} 

\delimitershortfall=-1pt 


\def\clap#1{\hbox to 0pt{\hss#1\hss}}

\makeatletter
\def\underbracket{%
    \@ifnextchar[{\@underbracket}{\@underbracket [\@bracketheight]}%
}
\def\@underbracket[#1]{%
    \@ifnextchar[{\@under@bracket[#1]}{\@under@bracket[#1][0.4em]}%
}
\def\@under@bracket[#1][#2]#3{
    \mathop{\vtop{\m@th \ialign {##\crcr $\hfil \displaystyle {#3}\hfil $%
    \crcr \noalign {\kern 3\p@ \nointerlineskip }\upbracketfill {#1}{#2}
    \crcr \noalign {\kern 3\p@ }}}}\limits}

\def\upbracketfill#1#2{$\m@th \setbox \z@ \hbox {$\braceld$}
    \edef\@bracketheight{\the\ht\z@}\bracketend{#1}{#2}
    \leaders \vrule \@height #1 \@depth \z@ \hfill
    \leaders \vrule \@height #1 \@depth \z@ \hfill \bracketend{#1}{#2}$}

\def\bracketend#1#2{\vrule \topsepheight #2 width #1\relax}
\makeatother


\makeatletter
\def\thmt@refnamewithcomma #1#2#3,#4,#5\@nil{%
  \@xa\def\csname\thmt@envname #1utorefname\endcsname{#3}%
  \ifcsname #2refname\endcsname
    \csname #2refname\expandafter\endcsname\expandafter{\thmt@envname}{#3}{#4}%
  \fi
}
\makeatother

\makeatletter
\newcommand*\rel@kern[1]{\kern#1\dimexpr\macc@kerna}
\newcommand*\widebar[1]{%
  \begingroup
  \def\mathaccent##1##2{%
    \rel@kern{0.8}%
    \overline{\rel@kern{-0.8}\macc@nucleus\rel@kern{0.2}}%
    \rel@kern{-0.2}%
  }%
  \macc@depth\@ne
  \let\math@bgroup\@empty \let\math@egroup\macc@set@skewchar
  \mathsurround\z@ \frozen@everymath{\mathgroup\macc@group\relax}%
  \macc@set@skewchar\relax
  \let\mathaccentV\macc@nested@a
  \macc@nested@a\relax111{#1}%
  \endgroup
}
\makeatother


\setcounter{tocdepth}{2} 

\DeclareMathOperator{\Hom}{Hom}
\DeclareMathOperator{\End}{End}

\DeclareMathOperator{\id}{id}

\DeclareMathOperator{\ch}{ch}
\DeclareMathOperator{\Ex}{Ex}
\DeclareMathOperator{\df}{df}
\DeclareMathOperator{\rk}{rk}
\DeclareMathOperator{\grk}{grk}
\DeclareMathOperator{\Rep}{Rep}
\DeclareMathOperator{\ind}{ind}
\DeclareMathOperator{\Spec}{Spec}
\DeclareMathOperator{\supp}{supp}

\newcommand\T{\rule{0pt}{2.6ex}}       
\newcommand\B{\rule[-1.2ex]{0pt}{0pt}} 

\newcommand{\defeq}{\ensuremath{\coloneqq}}

\newcommand{\cat}[1]{\ensuremath{\mathcal{#1}}}

\newcommand{\karalg}[1]{\ensuremath{\cat{K}ar(#1)}}

\newcommand{\Galg}[1]{\ensuremath{[#1]}}

\newcommand{\Z}{\ensuremath{\mathbb{Z}}}
\newcommand{\N}{\ensuremath{\mathbb{N}}}
\newcommand{\Q}{\ensuremath{\mathbb{Q}}}
\newcommand{\R}{\ensuremath{\mathbb{R}}}
\newcommand{\C}{\ensuremath{\mathbb{C}}}

\newcommand{\K}{\ensuremath{\mathbb{K}}}
\newcommand{\rO}{\ensuremath{\mathbb{O}}}

\newcommand{\p}[1]{\ensuremath{\prescript{p}{}{#1}}}
\newcommand{\pre}[2]{\ensuremath{\prescript{#1}{}{#2}}}

\newcommand{\heck}[1][]{\ensuremath{\mathcal{H}_{#1}}}

\newcommand{\std}[1]{\ensuremath{H_{#1}}}
\newcommand{\kl}[1]{\ensuremath{\underline{H}_{#1}}}
\newcommand{\pcan}[2][p]{\ensuremath{\prescript{#1}{}{\underline{H}}_{#2}}}
\newcommand{\desc}[1]{\ensuremath{\mathcal{#1}}}

\newcommand{\expr}[1]{\ensuremath{\underline{#1}}}
\newcommand{\mult}[1]{\ensuremath{\expr{#1}_{\bullet}}}

\newcommand{\rt}[1]{\ensuremath{\alpha_{#1}}}
\newcommand{\cort}[1]{\ensuremath{\alpha_{#1}^{\vee}}}
\newcommand{\rts}{\ensuremath{\Phi}}
\newcommand{\corts}{\ensuremath{\Phi^{\vee}}}
\newcommand{\rtbasis}{\ensuremath{\Delta}}
\newcommand{\cortbasis}{\ensuremath{\Delta^{\vee}}}
\newcommand{\charlat}{\ensuremath{X}}
\newcommand{\cocharlat}{\ensuremath{X^{\vee}}}

\newcommand{\affGr}{\ensuremath{\mathcal{G}r_a}}

\newcommand{\affFl}{\ensuremath{\mathcal{F}l_a}}
\newcommand{\Fl}{\ensuremath{\mathcal{F}l}}

\newcommand{\NH}{\ensuremath{\mathcal{NH}}}

\newcommand{\HC}{\ensuremath{\mathbf{H}}}
\newcommand{\BS}{\ensuremath{\mathbf{H}_{BS}}}
\newcommand{\HCnl}[1]{\ensuremath{\HC^{\nless #1}}}
\newcommand{\Homnl}[2][]{
   \ifthenelse{ \equal{#1}{} } { \ensuremath{\Hom_{\nless #2}} } 
   { \ensuremath{\Hom_{\nless #2, #1}} } }
\newcommand{\Endnl}[2][]{
   \ifthenelse{ \equal{#1}{} } { \ensuremath{\End_{\nless #2}} } 
   { \ensuremath{\End_{\nless #2, #1}} } }

\newcommand{\LL}[2]{\ensuremath{\mathrm{LL}_{\expr{#1}, \expr{#2}}}}
\newcommand{\dL}[3]{\ensuremath{\mathbb{LL}_{#1, \expr{#2}, \expr{#3}}}} 

\newcommand{\cols}{\ensuremath{{\color{blue} s}}}
\newcommand{\colt}{\ensuremath{{\color{red} t}}}

\newcommand{\llrrbracket}[1]{
  [\![#1]\!]}
  
\newcommand{\llrrparen}[1]{
  (\!(#1)\!)}

\tikzset{%
  DynNode/.style={circle, inner sep=2pt, draw=black, fill=white},
  Greater/.style={pos=0.65, inner sep=0mm, outer sep=0mm},
  highlight/.style={rectangle,rounded corners,fill=red!15,draw=red,
    fill opacity=0.5,thick},
  bendBelow/.style={bend left=70, looseness=2},
  bendAbove/.style={bend right=70, looseness=2},
  object/.style={circle, fill, inner sep=1.5pt, outer sep=0mm},
  labeling/.style={outer sep=0mm, inner sep=0mm},
  1morph/.style={->, shorten >= 0.5pt, >=stealth'},
  2morph/.style={-implies,double,double equal sign distance,
                 shorten >=2pt, shorten <=3pt},
  spot/.style={color=black, thin, dashed},
  sline/.style={color=blue, line width=2pt},
  tline/.style={color=red, line width=2pt},
  uline/.style={color=green, line width=2pt},
  line/.style={color=#1, line width=2pt},
  line/.default=blue,
  sdot/.style={color=blue, thin, fill},
  tdot/.style={color=red, thin, fill},
  udot/.style={color=green, thin, fill},
  dot/.style={color=#1, thin, fill},
  dot/.default=blue
}




\begin{document}

\def \xdist {0.4cm}
\def \ydist {0.5cm}
\def \circSize {2.5pt} 
\def \armLen {0.4cm} 
\def \edgeShift {0.5mm} 
\def \wingLen {0.3cm} 

\title{The $p$-canonical basis for Hecke algebras}

\author{Lars Thorge Jensen}
\address{Max Planck Institute for Mathematics, Vivatsgasse 7, 53111 Bonn}
\curraddr{}
\email{ltjensen@mpim-bonn.mpg.de}
\thanks{}

\author{Geordie Williamson}
\address{Max Planck Institute for Mathematics, Vivatsgasse 7, 53111 Bonn}
\curraddr{}
\email{geordw@gmail.com}
\urladdr{http://people.mpim-bonn.mpg.de/geordie/}
\thanks{}

\subjclass[2000]{Primary 20C20, Secondary 20C08, 17B10, 14M15}

\date{\today}

\begin{abstract}
   We describe a positive characteristic analogue of the Kazhdan-Lusztig basis
   of the Hecke algebra of a crystallographic Coxeter system and investigate some 
   of its properties. Using Soergel calculus we describe an algorithm to calculate
   this basis. We outline some known or expected applications in modular 
   representation theory. We conclude by giving several examples.
\end{abstract}

\maketitle

\section{Introduction}
One consequence of categorification is that it often leads to a
``canonical'' basis in the structure being categorified. Canonical
bases have numerous remarkable applications in representation theory
and beyond.

The first examples of canonical bases are the Kazhdan-Lusztig basis
of Hecke algebras and the canonical basis of quantum groups. These
bases have geometric origins, arising as the shadow of a sheaf (an
intersection cohomology sheaf) on a variety. That one should obtain
interesting bases in this way is, in some sense, predicted by
Grothendieck's function-sheaf correspondence.\footnote{Grothendieck's
  correspondence can be seen as an early encouragement to pursue
  categorification, by replacing a function (an object of a set) by a
  sheaf or $D$-module (an object of a category). Despite many more examples
  illustrating the power of this correspondence (e.g. Lusztig's theory of
  character sheaves, the geometric Langlands program, \dots), it still
  seems remarkable that Grothendieck's prophecy really works!
}

More recently, considerable progress has been made via algebraic
approaches to categorification. Here the canonical basis arises as the
shadow of a simple or indecomposable projective module over an
algebra. For example, the canonical basis of the quantum group is
realized in simply laced type as the classes in the Grothendieck group 
of indecomposable projective modules for KLR algebras (see 
\cite{VV}). Similarly, the Kazhdan-Lusztig basis arises as the 
classes of indecomposable Soergel bimodules (see \cite{EW1}). The passage between algebra and
geometry is made by realizing an algebra as an extension algebra of
geometric origin \cite{RoKLR, VV, SLanglands}.

The canonical basis (or rather, the base change matrix between the canonical
basis and a standard basis) usually records multiplicities in categories
of interest in representation theory, which are defined over a field
of characteristic $0$. This is reflected by the choice of coefficients
of characteristic zero (both in the geometric and algebraic
settings). Usually it is only via appeal to powerful geometric theorems
(e.g. \cite{DWeil2,BBD,SaDecompThm,dCMDecompThmSurvey}) that one has an algorithm 
to compute the canonical basis (however see \cite{EW1}).

There are several questions in modular representation theory where certain
multiplicities are expected to be given in terms of a canonical basis. Famous
examples include Lusztig's conjecture for simple rational
modules for algebraic groups \cite{LuConj}, the LLT conjecture for representations
of Hecke algebras at roots of unity \cite{LLT,Ar}, and the James conjecture on
representations of the symmetric group \cite{JaConj}. 
These conjectures predict that the situation in characteristic $0$ agrees with
the one in characteristic $p \gg 0$, sometimes state an explicit bound
on the ``bad'' primes to exclude, but usually aren't so bold as to make
any statement about what happens for small $p$.

However recent work by several authors suggests a different way of thinking
about this problem. Namely, in important examples there
exists a ``$p$-canonical basis'' for each choice of prime number $p$.
The $p$-canonical basis has the same indexing set as the usual canonical basis, 
and each of its elements agrees with the corresponding element of the 
canonical basis for large enough $p$, but may differ from it for small $p$. One 
hopes that the $p$-canonical basis gives the correct answer in  representation
theory in characteristic $p$. Examples of this phenomenon include:
\begin{enumerate}
\item The work of Grojnowski, Ariki and others \cite{GrojAffineSLp, Ar} 
  identifying the Grothendieck group of the category of representations of 
  all symmetric groups with the basic representation of an affine Lie algebra. 
  Here the $p$-canonical basis is defined via the classes of indecomposable
  projective modules, and hence the $p$-canonical basis contains deep
  information in modular representation essentially by definition. (It
  is from \cite{GrojAffineSLp} that we first learnt the term $p$-canonical basis.)
\item The work of Soergel \cite{SIntCohPosChar}, connecting certain multiplicities in
  the rational representation theory of algebraic groups with
  indecomposable summands of Bott-Samelson sheaves. Subsequent work (e.g. \cite{JMW1, BW})
  shows that Soergel's results may be restated as giving these
  multiplicities in terms of coefficients of the $p$-canonical basis of the
  Hecke algebra of the finite Weyl group.
\item A recent conjecture of Riche and the second author, which
  predicts that the characters of indecomposable tilting modules for
  algebraic groups should be given in terms of coefficients of the
  $p$-canonical basis in the anti-spherical module of the affine Weyl
  group. By Schur-Weyl duality this conjecture implies that all
  decomposition numbers for symmetric groups occur as coefficients in
  this basis. In contrast to (1), this equivalence is not ``by
  definition'', and might lead to more efficient algorithms. In recent work, Riche
  and the second author have confirmed their conjecture for
  the general linear group.
\end{enumerate}

The above approach breaks problems in modular representation
theory into two subproblems. First one should connect the problem in
representation theory to the $p$-canonical basis, and then one should
calculate the $p$-canonical basis. In general the second step seems to
be extremely difficult, and it is possibly too optimistic to expect an
answer in general \cite{WTorsion}. However at the very least this approach has 
the potential to unify questions in modular representation theory (just as
different questions may have answers given by the same Kazhdan-Lusztig 
polynomials). Also, there are combinatorial constraints on the $p$-canonical 
basis which make the situation very rigid, and might merit further
investigation.

\subsection{}
In this paper we recall the definition of and study the $p$-canonical basis of the Hecke
algebra of a crystallographic Coxeter system $(W,S)$ (see \cite{WParityOberwolfach} where
the definition appears in print for the first time). To motivate its
definition, we first recall how the Kazhdan-Lusztig basis arises from
categorification. In the introduction, let us assume that $W$ is the Weyl
group of a complex reductive group $G$ with
 a maximal torus $T$. Let $B \subset G$ denote the Borel subgroup
 corresponding to the simple reflections $S \subset W$.

The Hecke algebra $\heck$ of $(W,S)$ has two (essentially equivalent)
categorifications, often referred to loosely as the Hecke category:
\begin{enumerate}
\item \emph{Geometric:} The additive, monoidal (under convolution) category of 
  $B$-biequivariant semi-simple complexes on $G$: the full subcategory of the 
  equivariant derived category $D^b_{B \times B}(G, \mathbb{Q})$ consisting of 
  direct sums of shifts of equivariant intersection cohomology complexes.
\item \emph{Algebraic:} The monoidal category of Soergel bimodules: a
  certain full subcategory of the monoidal category of graded $R$-bimodules, 
  where $R$ denotes the regular functions on the Lie algebra of the maximal torus
  $T$.
\end{enumerate}
In the first setting, the Kazhdan-Lusztig basis arises as the graded
dimensions of stalks of the intersection cohomology complexes (see \cite{KL2}). 
In the second case, the Kazhdan-Lusztig basis is realized as the 
characters of the indecomposable self-dual Soergel bimodules (see 
\cite{S3, EW1}).

In \cite{EW2} the monoidal category of Soergel bimodules is described
by generators and relations, following earlier work by Elias \cite{E3},
Elias-Khovanov \cite{EKh} and Libedinsky \cite{Li3} (we recall this description 
in detail below). The upshot is that there exists a graded
monoidal category $\HC$ which is defined over the integers, and
whose extension of scalars to $\mathbb{Q}$ is equivalent to Soergel
bimodules. Hence one can think of $\HC$ as an integral form of
the Hecke category. For any field $k$ we can consider the
extension of scalars $\pre{k}{\HC}$ to $k$ and it is proved in \cite{EW2}
that one has an canonical ``character'' isomorphism of
$\mathbb{Z}[v,v^{-1}]$-algebras
\[
\ch : [\pre{k}{\HC}] \stackrel{\sim}{\longrightarrow} \heck
\]
between the split Grothendieck group of $\pre{k}{\HC}$ and the Hecke
algebra. Hence for any field $\pre{k}{\HC}$ provides a
categorification of the Hecke algebra.\footnote{There is a
minor additional technical assumption if the characteristic of $k$
is 2. We ignore this point in the introduction.} (Note that while
the coefficients of $\HC$ change, the Grothendieck group
is always the same Hecke algebra over $\mathbb{Z}[v, v^{-1}]$.)

In \cite{EW2} the indecomposable objects of $\pre{k}{\HC}$ are
classified, following Soergel's classification of the indecomposable
Soergel bimodules in \cite{S4}. It turns out that for all $w \in W$ there exists an
indecomposable object $\pre{k}{B}_w \in \pre{k}{\HC}$, and that any
indecomposable object is isomorphic to a grading shift of $\pre{k}{B}_w$ 
for some $w \in W$. The $p$-canonical basis is defined as the character of this
indecomposable object:
\[
\pcan{w} := \ch(\pre{k}{B}_w)
\]
where $p$ denotes the characteristic of $k$.
From basic properties of $\pre{k}{\HC}$ it is easy to see that $\{
\pcan{w} \; | \; w \in W \}$ is a basis for the Hecke algebra which
only depends on the characteristic of $k$, and
that its structure constants are positive (see \Cref{propPCanProps}). Moreover,
because $\pre{\Q}{\HC}$ is equivalent to Soergel bimodules with
$\mathbb{Q}$-coefficients, $\{\pcan[0]{w} \; | \; w \in W \}$ is the
Kazhdan-Lusztig basis.

\subsection{}

In this subsection we outline the connection between the $p$-canonical basis and 
parity sheaves on (affine) flag varieties. The reader unfamiliar with affine 
flag varieties may keep the important case of a (finite) flag variety in mind. 

To any based root datum we can associate a connected, reductive algebraic group
scheme $G$ over $\Z$ with Borel subgroup $B \subseteq G$ and  maximal torus 
$T \subseteq B \subseteq G$. In the finite case, let $X$ denote the $\C$-points
of the flag variety $\Fl \defeq G / B$ with its classical metric topology. 

In the affine setting, define the loop group $LG$ (resp. positive loop group $L^+ G$) of $G$ 
as the $\Z$-functor given by $R \mapsto G(R\llrrparen{t})$ (resp. $R \mapsto G(R\llrrbracket{t})$). 
Denote by $I$ the Iwahori subgroup given by the inverse image of $B$ under the
morphism $L^+ G \rightarrow G$ induced by $t \mapsto 0$. In this case we define $X$
to be the $\C$-points of the affine flag variety $\affFl \defeq LG / I$ viewed 
as ind-variety (see \cite{Goer} for more information).

In both settings we have an (Iwahori)-Bruhat decomposition expressing the 
corresponding flag variety as a disjoint union of left $B(\C)$ (resp. 
$I( \C )$)-orbits indexed by the (extended affine) Weyl group. The closure
relation is given by the Bruhat order. Note that the affine flag variety is 
isomorphic as an ind-variety to a suitable disjoint union of Kac-Moody flag 
varieties (see e.g. \cite{KuKMGrps}).

Fix a field $k$ of coefficients. We may consider $D^b_{H}(X)$ the $H$-equivariant 
bounded derived category of $k$-sheaves where $H$ is either $B(\C)$ or 
$I(\C)$ depending on the setting (see \cite{BL} for more information about 
equivariant derived categories).
In \cite[\S 4.1]{JMW1} Juteau, Mautner and the second author introduce and prove 
the existence of parity sheaves on generalized flag varieties, a class of 
objects in $D^b_{H}(X)$ whose stalks satisfy a cohomological parity 
vanishing condition (for the trivial pariversity function). Their work was 
motivated by Soergel's idea to consider another class of objects as 
``replacements'' for intersection cohomology complexes with positive 
characteristic coefficients (see \cite{SIntCohPosChar}). Observe that while $X$ is 
still a variety (resp. an ind-variety) over $\C$ equipped with its classical 
topology, the coefficients of the sheaves we are studying may lie in a field of 
positive characteristic. 

The theory of parity sheaves parallels the theory of perverse sheaves in the 
following points:
\begin{enumerate}
   \item indecomposable parity sheaves are classified analogously to simple perverse
         sheaves, being up to isomorphism the unique extension of an irreducible 
         local system on a stratum;
   \item in our setting, if the coefficients are a field of characteristic zero 
         the intersection cohomology sheaves are parity sheaves. 
\end{enumerate}
But while the decomposition theorem for perverse sheaves fails in positive
characteristic, the pushforward along a proper, even, stratified map preserves
parity sheaves and it is possible to calculate the multiplicities
of the occurring indecomposable parity sheaves via intersection forms (see
\cite[\S 3.3.]{JMW1}). Thus parity sheaves are particularly interesting in the 
case of positive characteristic coefficients.

Parity sheaves on various varieties have also been used for categorification.
In \cite{Mak} Maksimau realizes Lusztig's integral form of the positive half of the 
quantum group associated to a Dynkin quiver as a coalgebra geometrically, by 
considering parity sheaves on quiver moduli spaces. In our setting, 
parity sheaves also give canonical bases of the Hecke algebra. If the 
coefficients are a field of characteristic zero, then the graded dimensions  
of stalks of parity sheaves give the Kazhdan-Lusztig basis
(as mentioned above, see \cite{KL2, SpIntCoh}). For a field of positive 
characteristic the indecomposable parity sheaves realize the $p$-canonical basis
in this way (see \cite{BW} in the setting of the flag variety)\footnote{In many
settings this statement follows from \cite{S4,FW,ARi}. In the generality discussed
in this paper this result will appear in \cite{RW}.}.

From this perspective, analyzing when the $p$-canonical basis differs from
the Kazhdan-Lusztig basis has several interesting geometric implications. First 
of all, it allows one to study the failure of the decomposition theorem in the 
modular setting (see \cite[\S3]{JMW1}). Secondly, there are close connections 
between the decomposition matrix for intersection cohomology complexes and the 
base change matrix between the Kazhdan-Lusztig and the $p$-canonical basis. In 
\cite[Theorem 2.6]{ARModPervShI} Achar and Riche show that the base change 
matrix gives a $q$-refinement of the decomposition matrix on the Langlands 
dual flag variety. Moreover, in \cite{WRedCharVarTypeA} the second author 
proves that certain base change coefficients and decomposition numbers coincide 
and uses this to give an example of a reducible characteristic variety in 
type $A$. Non-trivial decomposition numbers for an intersection cohomology 
complex can only occur when the characteristic variety of the corresponding 
regular holonomic $D_X$-module is reducible (see \cite{VW}).

\subsection{}
This article gives a survey on known results about the $p$-canonical basis
and its main purpose is to discuss an algorithm to calculate
the $p$-canonical basis. This algorithm relies heavily on the
diagrammatic description of the Hecke category $\HC$. Motivated by 
de Cataldo and Migliorini's proof of the decomposition theorem (see \cite{dCMDecompThmSurvey}),
the second author showed in \cite[\S3.3]{JMW1} together with Juteau and Mautner
that certain intersection forms govern the decomposition behaviour of
the pushforward of the constant sheaf along a surjective, stratified,
even resolution of singularities. In the diagrammatic framework this 
algorithm becomes more feasible, thanks to an explicit description 
of bases of $\Hom$ spaces. This leads to the algorithm described in this paper.
We conclude the paper by giving many examples to show that the $p$-canonical 
basis may depend in a subtle way on $p$.

\subsection{Structure of the Paper:}

\begin{description}
   \item[\Cref{secBack}] We introduce notation and recall important results about
         the Hecke algebra and Soergel calculus.
   \item[\Cref{secIntForm}] After recalling the definition of the $p$-canonical 
         basis, we explain how to calculate it using intersection forms.
   \item[\Cref{secProp}] The elementary properties of the $p$-canonical basis are
         stated and proved.
   \item[\Cref{secEx}] We give several new and interesting examples of the $p$-ca\-non\-ical 
         basis.
\end{description}

\subsection{Acknowledgements}
We would like to thank Ben Elias for very valuable comments and some minor corrections.
We would also like to thank Nicolas Libedinsky for his feedback and the Max Planck Institute
of Mathematics for financial support.

\section{Background}
\label{secBack}

\subsection{Coxeter Systems and Based Root Data}
\label{secCox}
Let $S$ be a finite set and $(m_{s, t})_{s, t \in S}$ be a matrix with entries in
$\N \cup \{\infty\}$ such that $m_{s,s} = 1$ and $ m_{s, t} = m_{t, s} \geqslant 2$ 
for all $s \neq t \in S$. Denote by $W$ the group generated by $S$ subject to the relations 
$(st)^{m_{s, t}} = 1$ for $s, t \in S$ with $m_{s, t} < \infty$. We say that $(W, S)$
is a \emph{Coxeter system} and $W$ a \emph{Coxeter group}. The Coxeter group $W$
comes equipped with a length function $l: W \rightarrow \N$ and the Bruhat order
$\leqslant$ (see \cite{Hum} for more details). A Coxeter system $(W, S)$ is 
called \emph{crystallographic} if $m_{s, t} \in \{2, 3, 4, 6, \infty\}$ for 
all $s, t \in S$.

Define an \emph{expression} to be a finite sequence of elements in $S$. We denote by
\[ \Ex(S) \defeq \{\varnothing\} \cup \bigcup_{i \in \N \setminus \{0\}} \underbrace{S \times 
   \dots \times S}_{i\text{-times}} \]
the set of all expressions in $S$. For an expression $\expr{w} = (s_1, s_2, \dots, s_n)$
denote its \emph{length} by $l(\expr{w}) = n$. The multiplication gives a canonical map
$\Ex(S) \rightarrow W$, $\expr{w} \mapsto \mult{w}$. An expression $\expr{w}$ in $S$
is called \emph{reduced} if $l(\expr{w}) = l(\mult{w})$. For an expression 
$\expr{w} = (s_1, s_2, \dots, s_n)$ in $S$ a \emph{subexpression} of $\expr{w}$ is
a sequence $\expr{w}^{\expr{e}} = (s_1^{e_1}, s_2^{e_2}, \dots s_n^{e_n})$ 
where $e_i \in \{0, 1\}$ for all $i$. The sequence $\expr{e} = (e_1, e_2, \dots, e_k)$ 
is called the \emph{associated $01$-sequence}. We usually decorate $\expr{e}$ as follows: 
For $1 \leqslant k \leqslant n$ denote by $\expr{w}_{\leqslant k} \defeq (s_1, s_2, \dots, s_k)$ 
the first $k$ terms and set $w_k \defeq 
(\expr{w}_{\leqslant k}^{\expr{e}_{\leqslant k}})_{\bullet}$. Assign to $e_i$ a 
decoration $d_i \in \{U, D\}$ where $U$ stands for \emph{Up} and $D$ for 
\emph{Down} as follows:
\[ d_i \defeq 
   \begin{cases}
      U & \text{if } w_{i-1} s_i > w_{i-1} \text{,} \\
      D & \text{if } w_{i-1} s_i < w_{i-1} \text{.}
   \end{cases}
\]
We often write the decorated sequence as $(d_1 e_1, d_2 e_2, \dots, d_n e_n)$. 
The \emph{defect} of $\expr{e}$ is defined to be
\[ \df(e) \defeq \vert \{ i \; \vert \; d_i e_i = U0 \}\vert - 
            \vert \{ i \; \vert \; d_i e_i = D0 \} \vert \text{.}
\]
To illustrate the definitions, consider for example the case $S = \{s, t\}$ 
and $m_{s,t} = m_{t, s} = 3$ (i.e. type $A_2$). The reduced expression $(s, t, s)$
admits two decorated $01$-sequences expressing $s$:
\begin{align*}
   &(U1, U0, D0) \quad \text{of defect } 0\\
   &(U0, U0, U1) \quad \text{of defect } 2\\
\end{align*}

Recall that given an abstract root datum  $\Psi = (\charlat, \rts, \cocharlat, \corts)$ 
and a basis $\rtbasis \subseteq \rts$ the quadruple 
$\Psi_0 = (\charlat, \rtbasis, \cocharlat, \cortbasis)$ is called a \emph{based
root datum} where $\cortbasis$ is the set of simple coroots
(see \cite[\S 7.4]{SpLinAlgGrps} for the definition of a root datum).
From now on, fix a based root datum $\Psi_0$. The matrix $(\langle \rt{s}, \cort{t} 
\rangle)_{s, t \in S}$ is called the \emph{Cartan matrix} associated to this 
based root datum. By the existence theorem (see \cite[Expos\'e XXV, Th\'eor\`eme 1.1]{SGA3}), 
starting from $\Psi_0$ we get $G$, a split connected reductive
algebraic group scheme over $\Z$, together with a Borel subgroup $B \subseteq G$ 
and a maximal torus $T \subseteq B \subseteq G$ such that the root datum 
determined by $(G, T)$ and the basis given by the simple roots whose root 
groups are contained in $B$ give the corresponding based root datum $\Psi_0$. 
For a root $\alpha \in \rts$ we define the corresponding reflection on 
$\charlat$ via 
\[ s_{\alpha} \lambda \defeq \lambda - \langle \lambda, \alpha^{\vee} \rangle \alpha
   \qquad \text{for all } \lambda \in X \text{.} \]
We denote the Weyl group of $\rts$ by $W \defeq \langle s_{\alpha} \; \vert \; 
\alpha \in \rts \rangle$. 

For $S \defeq \{ s_{\alpha} \; \vert \; \alpha \in \rtbasis \}$ the triple 
$(\cocharlat, \rtbasis, \cortbasis)$ gives a (not necessarily symmetric) faithful 
realization of the Coxeter system $(W, S)$ over $\Z$ (as defined in the 
appendix of \cite{E3}). Fix a commutative ring $k$. Then ${}^k V \defeq 
\cocharlat \otimes_{\Z} k$ yields a (potentially non-faithful) realization of 
$(W, S)$ over $k$. Set ${}^k V^{\ast} \defeq \Hom_{k}({}^k V, k)$ and note 
that ${}^k V^{\ast}$ is isomorphic to $\charlat \otimes_{\Z} k$. Throughout we 
will assume the following:
%
%
%
\begin{ass}[Demazure Surjectivity]
   The maps $\rt{s}: {}^k V \rightarrow k$ and $\cort{s}: {}^k V^{\ast} 
   \rightarrow k$ are surjective for all $s \in S$.
\end{ass}
\noindent This is automatically satisfied if $2$ is invertible in $k$ or if the
Coxeter system $(W, S)$ is of simply-laced type and of rank $\lvert S \rvert \geqslant 2$.

We denote by $R = S({}^k V^{\ast})$ the symmetric algebra of ${}^k V^{\ast}$
over $k$ and view it as a graded ring with ${}^k V^{\ast}$ in degree $2$. The action
of $W$ on ${}^k V$ induces an action on $R$ by functoriality. For any $s \in S$ 
we define the \emph{Demazure operator} $\partial_s: R \rightarrow R(-2)$ via
\[ \partial_s(f) \defeq \frac{f - s(f)}{\rt{s}} \]
where $(1)$ denotes the grading shift down by one: Given a graded $R$-bimodule
$B = \bigoplus_{i\in \Z} B^i$, we denote by $B(1)$ the shifted bimodule with
$B(1)^i = B^{i+1}$. Observe that $\partial_s$ is a well-defined graded 
$R^s$-bimodule homomorphism (see \cite[\S 3.3]{EW2} for more details).

\subsection{The Hecke Algebra}
The Hecke algebra is the free $\Z[v, v^{-1}]$-algebra with $\{ \std{w} \; \vert \; 
w \in W \}$ as basis and multiplication determined by:
\begin{alignat*}{2}
   \std{s}^2 &= (v^{-1} - v) \std{s} + 1  \qquad && \text{for all } s \in S \text{,} \\
   \std{x} \std{y} &= \std{xy} && \text{if } l(x) + l(y) = l(xy) \text{.}
\end{alignat*}

There is a unique $\Z$-linear involution $\widebar{(-)}$ on $\heck$ satisfying
$\widebar{v} = v^{-1}$ and $\widebar{\std{x}} = \std{x^{-1}}^{-1}$. The Kazhdan-Lusztig 
basis element $\kl{x}$ is the unique element in $\std{x} + \sum_{y < x} v\Z[v] H_y$ 
that is invariant under $\widebar{(-)}$. This is Soergel's normalization from 
\cite{S2} of a basis introduced originally in \cite{KL}.

\subsection{Soergel Calculus}

We define an \emph{$S$-graph} to be a finite, decorated, planar graph with boundary
properly embedded in the planar strip $\R \times [0,1]$ whose edges are coloured by $S$
and all of whose vertices are of the following types:
\begin{center}
   \begin{tabular}{c l l}
      \label{tabGenMorph} \\
      \centering
      \begin{tikzpicture}[baseline=-0.5ex]
         \draw[spot] (0,0) circle (0.5cm);
         \draw[sline] (0,-0.5) -- (0,0);
         \draw[sdot] (0,0) circle (\circSize);
      \end{tikzpicture} & univalent vertices (``dots'') & of degree $1$, \\[0.5cm]
      \begin{tikzpicture}[baseline=-0.5ex]
         \draw[spot] (0,0) circle (0.5cm);
         \clip (0,0) circle (0.5cm);
         \draw[sline] (-1,-1) -- (0,0);
         \draw[sline] (1,-1) -- (0,0);
         \draw[sline] (0,0) -- (0,1);
      \end{tikzpicture} & trivalent vertices & of degree $-1$, \\[0.5cm]
      \begin{tikzpicture}[baseline=-0.5ex, outer sep=0.1mm, inner sep=0mm]
         \draw[spot] (0,0) circle (0.5cm);
         \clip (0,0) circle (0.5cm);         
         \draw[sline] (0,0) -- (210:0.5cm);
         \draw[tline] (0,0) -- (230:0.5cm);
         \draw[sline] (0,0) to (250:0.5cm);
         
         \draw[tline] (0,0) -- (150:0.5cm);
         \draw[sline] (0,0) -- (130:0.5cm);
         \draw[tline] (0,0) to (110:0.5cm);
         
         \draw[dotted, thick] (0, -0.3) to (0.25, -0.3);
         \draw[dotted, thick] (0, 0.3) to (0.25, 0.3);
         
      \end{tikzpicture} & $2m_{\cols, \colt}$-valent vertices & of degree $0$, \\[0.5cm]
   \end{tabular} \\
\end{center}
\noindent
where we require the $2m_{\cols, \colt}$-valent vertex to have exactly
$2m_{s, t}$ edges, coloured alternately by $\cols$ and $\colt$ around
the vertex.

The regions of an $S$-graph (i.e. the connected components of the complement of 
the graph in $\R \times [0,1]$) may be \emph{decorated} by homogeneous elements of $R$. 
The degree of a decorated $S$-graph is defined as the sum of the degrees
of its vertices and of the degrees of the polynomials decorating its regions.

Next, we introduce the diagrammatic category of Soergel bimodules. The main reference
for this is \cite{EW2} (see also \cite{E3} in the dihedral case and \cite{EKh} 
in type $A$).

Let $\BS$ be the strict monoidal category with $\Z$-graded $\Hom$-spaces which 
is monoidally generated by the elements in $S$. Thus the objects of $\BS$ are
given by $\Ex(S)$ and the monoidal structure on the level of objects is given by
concatenation of sequences in $S$. For any $\expr{x}, \expr{y} \in \Ex(S)$,
$\Hom_{\BS}(\expr{x}, \expr{y})$ is defined to be the free $R$-module generated
by isotopy classes of decorated $S$-graphs with bottom boundary $\expr{x}$ and top
boundary $\expr{y}$ modulo the local relations below. The composition (resp. 
tensor product) of two morphisms is given by vertical (resp. horizontal) 
concatenation of diagrams.

We now recall the relations defining $\BS$:

\subsubsection{One-colour Relations} For all $\cols \in S$ we have:

\begin{itemize}
   \item Frobenius Unit:
\begin{equation}
   \label{eqUnit}
   \begin{tikzpicture}[baseline=(current  bounding  box.center), 
                       inner sep=0mm, outer sep=0mm]      
      \draw[spot] (0,0) circle (0.75cm);
      \draw[sline] (0,-0.75) to (0,0.75);
      \draw[sline] (0,0) to (0.3,0);
      \draw[sdot] (0.3,0) circle (\circSize);
   \end{tikzpicture} 
   \quad = \quad
   \begin{tikzpicture}[baseline=(current  bounding  box.center), 
                       inner sep=0mm, outer sep=0mm]      
      \draw[spot] (0,0) circle (0.75cm);
      \draw[sline] (0,-0.75) to (0,0.75);
   \end{tikzpicture}
\end{equation}

   \item Frobenius Associativity:
\begin{equation}
   \label{eqAss}
   \begin{tikzpicture}[baseline=(current  bounding  box.center), 
                       inner sep=0mm, outer sep=0mm]
      
      \draw[spot] (0,0) circle (0.75cm);      
      \draw[sline] (-45:0.75cm) -- (0.25,0) -- (45:0.75cm);
      \draw[sline] (135:0.75cm) -- (-0.25,0) -- (-135:0.75cm);
      \draw[sline] (-0.25,0) to (0.25,0);
   \end{tikzpicture}
   \quad = \quad
   \begin{tikzpicture}[baseline=(current  bounding  box.center), 
                    inner sep=0mm, outer sep=0mm, rotate=90]
      
      \draw[spot] (0,0) circle (0.75cm);      
      \draw[sline] (-45:0.75cm) -- (0.25,0) -- (45:0.75cm);
      \draw[sline] (135:0.75cm) -- (-0.25,0) -- (-135:0.75cm);
      \draw[sline] (-0.25,0) to (0.25,0);
   \end{tikzpicture}
\end{equation}

   \item Needle Relation:
\begin{equation}
   \label{eqNeedle}
   \begin{tikzpicture}[baseline=(current  bounding  box.center), 
                       inner sep=0mm, outer sep=0mm]      
      \draw[spot] (0,0) circle (0.75cm);
      \draw[sline] (0,0) circle (0.25cm);
      \draw[sline] (0,-0.75) to (0,-0.25);
      \draw[sline] (0,0.25) to (0,0.75);
   \end{tikzpicture} 
   \quad = \quad 0
\end{equation}

   \item Barbell Relation:
\begin{equation}
   \label{eqAlpha}
   \begin{tikzpicture}[baseline=(current  bounding  box.center), 
                       inner sep=0mm, outer sep=0mm]      
      \draw[spot] (0,0) circle (0.75cm);
      \draw[sline] (0,-0.3) to (0,0.3);
      \draw[sdot] (0,0.3) circle (\circSize);
      \draw[sdot] (0,-0.3) circle (\circSize);
   \end{tikzpicture} \quad = \quad
   \begin{tikzpicture}[baseline=(current  bounding  box.center), 
                       inner sep=0mm, outer sep=0mm]  
      \draw[spot] (0,0) circle (0.75cm);
      \node (v) at (0,0) {$\rt{\cols}$};
   \end{tikzpicture}
\end{equation}

   \item Nil Hecke Relation:
\begin{equation}
   \label{eqPolSlide}
   \begin{tikzpicture}[baseline=(current  bounding  box.center), 
                       inner sep=0mm, outer sep=1mm]  
      \draw[spot] (0,0) circle (0.75cm);
      \clip (0,0) circle (0.75cm);
      \draw[sline] (0,-1) to node[left, color=black] (f) {\small$f$} (0,1);
   \end{tikzpicture} \quad = \quad
   \begin{tikzpicture}[baseline=(current  bounding  box.center), 
                       inner sep=0mm, outer sep=1mm]  
      \draw[spot] (0,0) circle (0.75cm);
      \clip (0,0) circle (0.75cm);
      \draw[sline] (0,-1) to node[right, color=black] (sf) {\small$\cols f$} (0,1);
   \end{tikzpicture} \quad + \quad
   \begin{tikzpicture}[baseline=(current  bounding  box.center), 
                       inner sep=0mm, outer sep=1mm]  
      \draw[spot] (0,0) circle (0.75cm);
      \clip (0,0) circle (0.75cm);
      \draw[sline] (0,-1) to (0,-0.4);
      \draw[sdot] (0,-0.4) circle (\circSize);
      \draw[sline] (0,1) to (0,0.4);
      \draw[sdot] (0,0.4) circle (\circSize);
      \node (parsf) at (0,0) {\small$\partial_{\cols}(f)$};
   \end{tikzpicture}
\end{equation}
\end{itemize}

\subsubsection{Two-colour Relations} There are two colour relations
for all pairs $\cols, \colt \in S$ such that $m_{\cols, \colt} <
\infty$ (so that the $2m_{st}$-valent vertex is defined).

The first two-colour relation is called \emph{Two-colour Associativity} and describes
what happens what happens when we pull a trivalent vertex through a 
$2m_{\cols, \colt}$-valent vertex. We give it for $m_{\cols, \colt} \in \{2, 3, 4\}$
and let the reader guess the general form (see \cite[(6.12)]{E3}):
\begin{alignat*}{2}
   \begin{tikzpicture}[baseline=(current  bounding  box.center), scale=0.75]
      \draw[spot] (0,0) circle (1cm);     
      \draw[tline] (-45:1) -- (135:1);
      \draw[sline] (-135:1) -- (45:0.3);
      \draw[sline] (45:0.3) -- (20:1);
      \draw[sline] (45:0.3) -- (80:1);
   \end{tikzpicture} 
   \enspace &= \enspace
   \begin{tikzpicture}[baseline=(current  bounding  box.center), scale=0.75]
      \draw[spot] (0,0) circle (1cm);
      \draw[tline] (-45:1) -- (135:1);
      \draw[sline] (-135:1) -- (-135:0.5);
      \draw[sline] (-135:0.5) -- (20:1);
      \draw[sline] (-135:0.5) -- (80:1); 
   \end{tikzpicture} 
   \qquad && \text{if  } m_{\cols, \colt} = 2 \quad (\text{type } A_1 \times A_1) \text{,} \\[0.5cm]
   \begin{tikzpicture}[baseline=(current  bounding  box.center), scale=0.75]
      \draw[spot] (0,0) circle (1cm);
      \coordinate (a1) at (30:1);
      \coordinate (a2) at (60:1);
      \coordinate (a3) at (90:1);
      \coordinate (a4) at (135:1);
      \coordinate (a5) at (-135:1);
      \coordinate
(a6) at (-90:1);
      \coordinate (a7) at (-45:1);
      \coordinate (l1) at (45:0.4); 
      \coordinate (l0) at (0:0);
      \draw[tline] (a5) -- (l0) -- (a3);
      \draw[tline] (l0) -- (a7);
      \draw[sline] (a1) -- (l1) -- (a2);
      \draw[sline] (l1) -- (l0) -- (a4);
      \draw[sline] (l0) -- (a6);
   \end{tikzpicture} 
   \enspace &= \enspace
   \begin{tikzpicture}[baseline=(current  bounding  box.center), scale=0.75]
      \draw[spot] (0,0) circle (1cm);
      \coordinate (a1) at (30:1);
      \coordinate (a2) at (60:1);
      \coordinate (a3) at (90:1);
      \coordinate (a4) at (135:1);
      \coordinate (a5) at (-135:1);
      \coordinate
(a6) at (-90:1);
      \coordinate (a7) at (-45:1);
      \coordinate (r1) at (120:0.4); 
      \coordinate (r2) at (-60:0.4); 
      \coordinate (r3) at (-135:0.7);
      \draw[sline] (a2) to (r1) to (a4);
      \draw[sline] (r1) to[out=-90,in=150] (r2) to (a6);
      \draw[sline] (r2) to (a1);
      \draw[tline] (a3) to (r1) to[out=-150,in=90] (r3) to (a5);
      \draw[tline] (r1) to[out=-30,in=90, looseness=1.5] (r2) to (a7);
      \draw[tline] (r2) to[out=-150,in=0] (r3);
   \end{tikzpicture} 
   && \text{if } m_{\cols, \colt} = 3 \quad (\text{type } A_2) \text{,} \\[0.5cm]
   \begin{tikzpicture}[baseline=(current  bounding  box.center), scale=0.75]
      \draw[spot] (0,0) circle (1cm);
      \coordinate (a0) at (35:1); 
      \coordinate (a1) at (55:1);
      \coordinate (a2) at (75:1);
      \coordinate (a3) at (105:1); 
      \coordinate (a4) at (135:1);
      \coordinate (a5) at (-45:1);
      \coordinate (a6) at (-75:1);
      \coordinate (a7) at (-105:1); 
      \coordinate (a8) at (-135:1);
      \coordinate (l1) at (45:0.4); 
      \coordinate (l0) at (0:0);
      \draw[sline] (a0) -- (l1) -- (a1);
      \draw[sline] (l1) -- (l0) -- (a3);
      \draw[sline] (a6) -- (l0) -- (a8);
      \draw[tline] (a5) -- (l0) -- (a7);
      \draw[tline] (a2) -- (l0) -- (a4);
   \end{tikzpicture} 
   \enspace &= \enspace
   \begin{tikzpicture}[baseline=(current  bounding  box.center), scale=0.75]
      \draw[spot] (0,0) circle (1cm);
      \coordinate (a0) at (35:1);
      \coordinate (a1) at (55:1);
      \coordinate (a2) at (75:1);
      \coordinate (a3) at (105:1);
      \coordinate (a4) at (135:1);
      \coordinate (a8) at (-45:1);
      \coordinate (a7) at (-75:1);
      \coordinate (a6) at (-105:1);
      \coordinate (a5) at (-135:1);
      \coordinate (r1) at (105:0.4); 
      \coordinate (r2) at (-75:0.4); 
      \coordinate (r3) at (-135:0.7);
      \draw[sline] (a1) -- (r1) -- (a3);
      \draw[sline] (r1) to[out=-135,in=90] (r3) to (a5);
      \draw[sline] (r3) to[out=0,in=-135 ] (r2) to (a7);
      \draw[sline] (r1) to[out=-75,in=105] (r2);
      \draw[sline] (r2) to (a0);
      \draw[tline] (a2) -- (r1) -- (a4);
      \draw[tline] (r1) to[out=-105, in=135, looseness=2] (r2)  to[out=75, in=-45, looseness=2] (r1);
      \draw[tline] (a6) to (r2) to (a8);
   \end{tikzpicture}
   && \text{if } m_{\cols, \colt} = 4 \quad (\text{type } B_2) \text{.}
\end{alignat*}

The next two-colour relation is called \emph{Jones-Wenzl Relation} and expresses
a $2m_{s,t}$-valent with a dot on one strand as a linear combination of diagrams
in which only dots and trivalent vertices appear. We state it only for 
$m_{\cols, \colt} \in \{2, 3, 4\}$ and refer the reader to \cite{E3} for more detail:
\begin{alignat*}{2}
   \begin{tikzpicture}[baseline=(current  bounding  box.center)]
      \draw[spot] (0,0) circle (0.75);     
      \draw[tline] (-45:0.75) -- (135:0.75);
      \draw[sline] (-135:0.3) -- (45:0.75);
      \draw[sdot] (-135:0.3) circle (\circSize);
   \end{tikzpicture} 
   \enspace &= \enspace
   \begin{tikzpicture}[baseline=(current  bounding  box.center)]
      \draw[spot] (0,0) circle (0.75);
      \draw[tline] (-45:0.75) -- (135:0.75);
      \draw[sline] (45:0.3) -- (45:0.75);
      \draw[sdot] (45:0.3) circle (\circSize);
   \end{tikzpicture} 
   \qquad && \text{if  } m_{\cols, \colt} = 2 \text{,} 
   \\[0.5cm]
   \begin{tikzpicture}[baseline=(current  bounding  box.center)]
      \draw[spot] (0,0) circle (0.75cm);
      \foreach \r in {30, 150, -90}
         \draw[tline] (0,0) -- (\r:0.75);
      \foreach \r in {90,-30}
         \draw[sline] (0,0) -- (\r:0.75);
      \draw[sline] (0,0) -- (-150:0.45);
      \draw[sdot] (-150:0.45) circle (\circSize); 
   \end{tikzpicture} 
   \enspace &= \enspace
   \begin{tikzpicture}[baseline=(current  bounding  box.center)]
      \draw[spot] (0,0) circle (0.75cm);
      \foreach \r in {30, 150, -90}
         \draw[tline] (0,0) -- (\r:0.75);
      \foreach \r in {90,-30}
{ 
         \draw[sline] (\r:0.375) -- (\r:0.75);
         \draw[sdot] (\r:0.375) circle (\circSize);}
   \end{tikzpicture} \; + \;
   \begin{tikzpicture}[baseline=(current  bounding  box.center)]
      \draw[spot] (0,0) circle (0.75cm);
      \draw[tline] (-90:0.75) to[out=90,in=-30] (150:0.75);
      \draw[sline] (90:0.75) to[out=-90,in=150] (-30:0.75);
      \draw[tline] (30:0.375) -- (30:0.75);
      \draw[tdot] (30:0.375) circle (\circSize);
   \end{tikzpicture}
   \qquad && \text{if } m_{\cols, \colt} = 3 \text{,} 
   \\[0.5cm]
   \begin{split}
   \begin{tikzpicture}[baseline=(current  bounding  box.center)]
      \draw[spot] (0,0) circle (0.75cm);
      \foreach \r in {0, 90, 180, -90}
         \draw[tline] (0,0) -- (\r:0.75);
      \foreach \r in {45, 135, -45}
         \draw[sline] (0,0) -- (\r:0.75);
      \draw[sline] (0,0) -- (-135:0.45);
      \draw[sdot] (-135:0.45) circle (\circSize); 
   \end{tikzpicture} 
   \enspace &= \enspace
   \begin{tikzpicture}[baseline=(current  bounding  box.center)]
      \def \r {0.45}
      \draw[spot] (0,0) circle (0.75cm);
      \foreach \a in {-90, 45, 135}
         \draw[sline] (\a:0.75) to (\a:\r);
      \foreach \a in {90, -45}
         \draw[tline] (\a:0.75) to (\a:\r);
      \draw[tline] (-120:0.75) to[out=80, in=-45] (157.75:0.75);
      \draw[sline] (-90:\r) to[out=90, in=-45] (135:\r);
      \draw[tline] (90:\r) to[out=-90, in=135] (-45:\r);
      \draw[sdot] (45:\r) circle (\circSize); 
   \end{tikzpicture} \; + \;
   \begin{tikzpicture}[baseline=(current  bounding  box.center)]
      \def \r {0.45};
      \draw[spot] (0,0) circle (0.75cm);
      \foreach \a in {-90, 45, 135}
         \draw[sline] (\a:0.75) to (\a:\r);
      \foreach \a in {90, -45}
         \draw[tline] (\a:0.75) to (\a:\r);
      \draw[tline] (-120:0.75) to (157.75:0.75);
      \draw[tline] (-135:\r) to (-135:0.6);
      \draw[sline] (-90:\r) to[out=90, in=-135] (45:\r);
      \draw[tline] (-135:\r) to[out=45, in=-90] (90:\r);
      \draw[sdot] (135:\r) circle (\circSize); 
      \draw[tdot] (-45:\r) circle (\circSize); 
   \end{tikzpicture} \; + \;
   \begin{tikzpicture}[baseline=(current  bounding  box.center)]
      \def \r {0.45};
      \draw[spot] (0,0) circle (0.75cm);
      \foreach \a in {-90, 45, 135}
         \draw[sline] (\a:0.75) to (\a:\r);
      \foreach \a in {90, -45}
         \draw[tline] (\a:0.75) to (\a:\r);
      \draw[tline] (-120:0.75) to (157.75:0.75);
      \draw[tline] (-135:\r) to (-135:0.6);
      \draw[sline] (135:\r) to[out=-45,in=-135] (45:\r);
      \draw[tline] (-135:\r) to[out=45,in=135] (-45:\r);
      \draw[tdot] (90:\r) circle (\circSize); 
      \draw[sdot] (-90:\r) circle (\circSize); 
   \end{tikzpicture} \\ & - \langle \rt{\cols}, \cort{\colt} \rangle \;
   \begin{tikzpicture}[baseline=(current  bounding  box.center)]
      \def \r {0.45}
      \draw[spot] (0,0) circle (0.75cm);
      \foreach \a in {-90, 45, 135}
         \draw[sline] (\a:0.75) to (\a:\r);
      \foreach\a in {90, -45}
         \draw[tline] (\a:0.75) to (\a:\r);
      \draw[tline] (-120:0.75) to (157.75:0.75);
      \draw[tline] (-135:\r) to (-135:0.6);
      \foreach \a in {-135, -45, 90}
         \draw[tline] (\a:\r) to (0,0);
      \draw[sdot] (45:\r) circle (\circSize);
      \draw[sdot] (-90:\r) circle (\circSize);
      \draw[sdot] (135:\r) circle (\circSize); 
   \end{tikzpicture} \; - \langle \rt{\colt}, \cort{\cols} \rangle \;
   \begin{tikzpicture}[baseline=(current  bounding  box.center)]
      \def \r {0.45};
      \draw[spot] (0,0) circle (0.75cm);
      \foreach \a in {-90, 45, 135}
         \draw[sline] (\a:0.75) to (\a:\r);
      \foreach \a in {90, -45}
      \draw[tline] (\a:0.75) to (\a:\r);
      \draw[tline] (-120:0.75) to[out=80,in=-45] (157.75:0.75);
      \foreach \a in {135, 45, -90}
         \draw[sline] (\a:\r) to (0,0);
      \foreach \a in {-45, 90}
         \draw[tdot] (\a:\r) circle (\circSize); 
    \end{tikzpicture}
    \end{split}
    \qquad && \text{if } m_{\cols, \colt} = 4 \text{.} 
\end{alignat*}

\subsubsection{Three-Colour Relations}
We do not repeat the definition of the Zamolodchikov relations or ``higher 
braid relations'' here. The reader can find them in \cite[\S 1.4.3]{EW2} 
and is referred to \cite{EW3} for more detail on the topological origins of 
the Zamolodchikov relations.

\subsection{Light Leaves and Double Leaves}
\label{secLL}
In this section we briefly discuss how to describe bases for morphism spaces 
in $\BS$. Fix an expression $\expr{w}$ and a reduced expression $\expr{x}$. 
In \cite[\S 6.1]{EW2} it is described how one may associate a ``light leaves 
morphism''  $\LL{w}{e} \in \Hom_{\BS}(\expr{w}, \expr{x})$ to each 
subexpression $\expr{e}$ of $\expr{w}$ such that $(\expr{w}^{\expr{e}})_{\bullet} = x$. 
We will not need the explicit construction here, but the reader is encouraged 
to consult \cite[\S 6.1]{EW2} to follow our examples. The construction of 
light leaves follows a construction of Libedinsky for Soergel bimodules 
\cite{Li1} and depends on certain additional non-canonical choices.

In the special case of $x = e$, the identity of $W$, we get (see \cite[Proposition 6.12]{EW2}):
\begin{prop}
   The set of all light leaves indexed by subsequences $\expr{e}$ of $\expr{w}$ expressing
   the identity of $W$ gives an $R$-basis of $\Hom_{\BS}(\expr{w}, \varnothing)$.
\end{prop}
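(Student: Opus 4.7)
The plan is to proceed by induction on the length $n = l(\expr{w})$. The base case $n = 0$ is immediate: $\Hom_{\BS}(\varnothing, \varnothing) = R$ (a region decorated by a polynomial, with no strands), and the empty subexpression is the unique subexpression expressing the identity of $W$, with associated light leaf $\id_R$. For the inductive step, write $\expr{w} = \expr{w}' s$ where $s \in S$. The subexpressions $\expr{e}$ of $\expr{w}$ with $(\expr{w}^{\expr{e}})_\bullet = e$ split according to the value $x' := (\expr{w}'^{\expr{e}'})_\bullet$ of the truncation $\expr{e}' = \expr{e}_{\leqslant n-1}$: either $e_n = 0$ (and then $x' = e$) or $e_n = 1$ (and then $x' = s$). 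From the inductive construction of light leaves recalled in \cite[\S 6.1]{EW2}, $\LL{w}{e}$ is obtained from $\LL{w'}{e'}$ by postcomposing with a specific elementary morphism in $\Hom_{\BS}(\expr{x}' s, \expr{x})$ determined by the four possible decorations $U0, U1, D0, D1$ of $d_n e_n$.

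To establish that these light leaves form an $R$-basis of $\Hom_{\BS}(\expr{w}, \varnothing)$, I would treat spanning and linear independence separately. For spanning, the Frobenius structure on $s$ (encoded by the Unit relation \eqref{eqUnit} and the Barbell relation \eqref{eqAlpha}) provides an adjunction, so that capping the last strand with a dot yields an $R$-linear isomorphism
\[
   \Hom_{\BS}(\expr{w}' s, \varnothing) \;\cong\; \Hom_{\BS}(\expr{w}', s).
\]
This reduces spanning to the analogous statement for morphisms from $\expr{w}'$ to the expression $s$. The general ``double leaves'' statement (not just for target $\varnothing$) is proved simultaneously by strong induction, which is the natural inductive setup; the diagrammatic relations suffice to reduce any composition of generators on top of $\expr{w}'$ to a linear combination of the inductively constructed morphisms.

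For linear independence, the cleanest route is via a pairing argument: one constructs, for each subexpression $\expr{e}$, a ``dual'' (upside-down) light leaf $\LL{w}{e}^\ast \in \Hom_{\BS}(\varnothing, \expr{w})$, and shows that composition gives a matrix
\[
   \bigl( \LL{w}{e'}^\ast \circ \LL{w}{e} \bigr)_{\expr{e},\expr{e}'} \in \End_{\BS}(\varnothing) = R
\]
which, after a suitable ordering of subexpressions (e.g. by the sequence of intermediate $w_k$'s together with decoration data), is upper triangular with nonzero diagonal entries arising as explicit monomials in the $\rt{s}$. Independence then follows from the nondegeneracy of this pairing. An equivalent route, available once one is willing to invoke the character map $\ch \colon \Galg{\BS} \to \heck$, is to compare the graded rank of $\Hom_{\BS}(\expr{w}, \varnothing)$ coming from the Soergel Hom formula with the graded count $\sum_{\expr{e}} v^{-\df(\expr{e})}$ of light leaves; equality of these graded ranks together with spanning forces independence.

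The main obstacle is the independence step: spanning is a direct consequence of the local relations combined with the adjunction, but independence cannot be extracted from the relations alone and requires either the explicit construction of dual morphisms (which involves a careful case analysis for each of the four decorations $U0, U1, D0, D1$ and a verification that the triangularity survives the two-colour Jones--Wenzl relations) or the input of a nontrivial Hom-space dimension computation.
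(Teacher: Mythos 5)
The paper itself does not prove this statement: it is quoted from \cite[Proposition 6.12]{EW2} (building on Libedinsky's light leaves), where spanning is shown by a diagrammatic induction close in spirit to your sketch, but linear independence is obtained by applying the \emph{localization} functor: the localized Bott--Samelson object splits into standard summands indexed by \emph{all} subexpressions, and the localized light leaves are upper triangular with respect to the path dominance order with diagonal entries given by products of roots, hence invertible over the fraction field. Your overall architecture (simultaneous induction on the length, adjunction to peel off the last letter, a triangularity argument for independence) has the right shape, and the spanning half, though only asserted, is the part that genuinely is a relation-chasing induction.

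The gap is in your independence step, and the specific mechanism you propose fails. The Gram matrix of light leaves against flipped light leaves is \emph{not} triangular with nonzero diagonal in any ordering: already for $\expr{w}=(\cols,\cols)$ the two subexpressions expressing the identity are $(U1,D1)$, whose light leaf is the cap (trivalent vertex followed by a dot), and $(U0,U0)$, whose light leaf is a pair of dots; pairing the cap with its own flip produces a needle with dots at both ends, which vanishes by \eqref{eqNeedle}, so the pairing matrix is
\[
\begin{pmatrix} 0 & \rt{\cols} \\ \rt{\cols} & \rt{\cols}^2 \end{pmatrix},
\]
and since reordering the subexpressions only permutes the diagonal entries, no ordering makes it triangular with nonzero diagonal. (More generally, the possible degeneracy of exactly these pairings is what produces the $p$-canonical basis, so unitriangularity with monomial diagonal is too much to hope for; nondegeneracy of the full pairing over $\operatorname{Frac}(R)$ is true but is itself established via the localization argument, so it cannot serve as the elementary input you want.) Your fallback is also not available: for the diagrammatic category $\BS$ over an arbitrary realization and field, the graded rank of $\Hom_{\BS}(\expr{w},\varnothing)$ and the character isomorphism $\ch$ of \Cref{thmDiagProps} are \emph{consequences} of the light/double leaves theorem in \cite{EW2}, so invoking the Soergel Hom formula here is circular. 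Finally, a small slip: as written, $\LL{w}{e'}^{\ast}\circ\LL{w}{e}$ is an endomorphism of $\expr{w}$, not an element of $\End_{\BS}(\varnothing)=R$; you want the composite $\varnothing\rightarrow\expr{w}\rightarrow\varnothing$.
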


For an $S$-graph $D$ denote by $\overline{D}$ the $S$-graph obtained by flipping 
the diagram upside down. This induces a contravariant equivalence on the monoidal 
category $\BS$ fixing all objects. 

Out of light leaves one can construct double leaves as follows. Let $\expr{x}$ 
and $\expr{y}$ be arbitrary expressions in $S$. For any subsequences $\expr{e}$ 
(resp. $\expr{f}$) of $\expr{x}$ (resp. $\expr{y}$) both expressing $w \in W$ 
define $\dL{w}{e}{f} \defeq \overline{\LL{y}{f}} \circ \LL{x}{e}$. The following
result can be found in \cite[Theorem 6.11]{EW2} (and was proved earlier
in the setting of Soergel bimodules by Libedinsky in \cite{LiLLLusConj}):

\begin{thm}
   \label{thmdL}
   The set of all double leaves ranging over all $w \in W$ and pairs of
   subsequences $\expr{e}$ (resp. $\expr{f}$) of $\expr{x}$ (resp. $\expr{y}$)
   both expressing $w$ gives an $R$-basis of $\Hom_{\BS}(\expr{x}, \expr{y})$.
\end{thm}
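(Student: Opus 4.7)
The plan is to deduce the theorem from the preceding proposition (the light-leaves basis for $\Hom_{\BS}(\expr{w}, \varnothing)$) by exploiting the self-duality of each generator $s \in S$ coming from the Frobenius structure, and then performing a triangular change of basis.

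First I would establish that every object of $\BS$ is self-dual. Each generator $s$ is self-biadjoint via cup and cap morphisms assembled from dots and trivalent vertices (the Frobenius Unit relation \eqref{eqUnit} and Frobenius Associativity \eqref{eqAss} are precisely what make this work). Iterating and tensoring, these cups and caps yield, for any two expressions $\expr{x}$ and $\expr{y}$, a canonical $R$-linear isomorphism
\[
   \Phi_{\expr{x}, \expr{y}} : \Hom_{\BS}(\expr{x}, \expr{y}) \xrightarrow{\;\sim\;} \Hom_{\BS}(\expr{x}\expr{y}^{\mathrm{rev}}, \varnothing),
\]
obtained by "rotating" the top strands of a diagram down to the bottom. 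Here $\expr{y}^{\mathrm{rev}}$ denotes $\expr{y}$ read in reverse. Applying the proposition to $\expr{x}\expr{y}^{\mathrm{rev}}$ produces an $R$-basis of the right-hand side indexed by subexpressions of $\expr{x}\expr{y}^{\mathrm{rev}}$ that express $e \in W$. Combinatorially such a subexpression splits as a pair $(\expr{e}, \expr{f})$, where $\expr{e}$ is a subexpression of $\expr{x}$ expressing some $w$, and $\expr{f}$ is a subexpression of $\expr{y}$ expressing $w^{-1} = w$ (since $w$ is an involution in the sense that inversion corresponds to reading the word backwards). This matches the indexing set for double leaves in the theorem.

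Next I would compare, for each pair $(\expr{e}, \expr{f})$, the element $\Phi_{\expr{x}, \expr{y}}(\dL{w}{e}{f})$ with the light leaf $\LL{(xy^{\mathrm{rev}})}{(e,f)} \in \Hom_{\BS}(\expr{x}\expr{y}^{\mathrm{rev}}, \varnothing)$. These will not be equal on the nose: the light-leaf construction of \cite[\S 6.1]{EW2} for $\expr{x}\expr{y}^{\mathrm{rev}}$ processes the two halves in sequence with the intermediate state evolving in the Bruhat order, whereas the double leaf factors literally through the fixed reduced expression of $w$. However, the difference is expressible via the Soergel-calculus relations in terms of light-leaves whose intermediate element $w'$ is strictly below $w$ in the Bruhat order. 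I would formalize this as a unitriangularity statement with respect to the Bruhat filtration: ordering index pairs by the value $w$, the transition matrix from double leaves to light leaves is unitriangular. Invertibility of such a matrix is automatic, so the proposition's basis transports to a basis of double leaves.

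The main obstacle is the triangularity argument in the third step: one must show rigorously that the diagrammatic discrepancy between $\Phi(\dL{w}{e}{f})$ and $\LL{(xy^{\mathrm{rev}})}{(e,f)}$ lies in the $R$-span of light leaves indexed by pairs whose common intermediate element is $w' < w$. This requires inducting on $l(\expr{x}) + l(\expr{y})$ and carefully tracking how the two recipes diverge at each $U/D$ decoration, using the one-colour relations \eqref{eqAlpha} and \eqref{eqPolSlide} to absorb the "correction terms" into diagrams that either factor through shorter expressions of strictly smaller Bruhat elements or are themselves subsequent light leaves in the basis. Everything else — the reduction to $\varnothing$ as target and the bijection of indexing sets — is a formal consequence of the Frobenius structure and the combinatorics of subexpressions.
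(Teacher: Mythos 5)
Your overall skeleton --- bending a morphism $\expr{x} \rightarrow \expr{y}$ into a morphism $\expr{x}\expr{y}^{\mathrm{rev}} \rightarrow \varnothing$ using caps (which here are just vertex-free arcs, with the zigzag identities holding by isotopy), and matching the two indexing sets --- is sound, apart from one slip: the subexpression of $\expr{y}^{\mathrm{rev}}$ must express $w^{-1}$, and the bijection with subexpressions of $\expr{y}$ expressing $w$ comes from reading the word backwards, not from any claim that ``$w^{-1}=w$''. The genuine gap is your third step. Asserting that the bent double leaf equals the corresponding light leaf of $\expr{x}\expr{y}^{\mathrm{rev}}$ with coefficient exactly $1$, modulo light leaves whose midpoint is strictly smaller in the Bruhat order, is not a bookkeeping matter that the one-colour relations can absorb: both constructions depend on choices of reduced expressions for the intermediate elements and of sequences of braid moves implemented by $2m_{s,t}$-valent vertices, and comparing two such choices requires the two-colour and Zamolodchikov relations. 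The statement that different choices of these ``rex moves'' agree up to morphisms factoring through strictly smaller elements (in the path-dominance order) is a substantial lemma, and together with a linear-independence argument it \emph{is} the proof of the theorem in \cite{EW2} (and, for bimodules, in \cite{LiLLLusConj}): there one shows directly that double leaves span (by an induction in which exactly this rex-move control is the key input) and that they are linearly independent (by evaluating in a localized standard category, where the pairing matrix is triangular with invertible entries). Your proposal defers all of this content to an unproved assertion, and an induction ``on $l(\expr{x})+l(\expr{y})$ using the barbell and nil Hecke relations'' will not reach it.

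There is also a circularity problem with the reduction itself. The Proposition you take as input --- that light leaves give an $R$-basis of $\Hom_{\BS}(\expr{w}, \varnothing)$ --- is literally the special case $\expr{y}=\varnothing$ of the theorem you are proving (the paper introduces it with ``in the special case $x=e$ \dots''), and in \cite{EW2} it is obtained as a consequence of the double-leaves theorem, not proved beforehand. So unless you supply an independent proof of that Proposition, your argument assumes a special case of the statement whose general case you then claim to deduce; and any independent proof of the Proposition would already require the same spanning-plus-independence machinery described above, at which point the bending trick buys nothing. In short: the reduction to $\varnothing$ and the indexing bijection are fine but formal, while the unitriangularity claim and the input Proposition together conceal, rather than replace, the actual proof.
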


\subsection{The Diagrammatic Category: Properties}

Note that all relations in $\BS$ are homogeneous for our grading on $S$-graphs
and thus $\BS$ is a category enriched in graded left $R$-modules; multiplying
an $S$-graph $D$ with a homogeneous polynomial $f \in R$ from the left is defined
by decorating the leftmost region of $D$ with $f$.

Let $\HC$ be the Karoubian envelope of the graded version of the additive closure
of $\BS$, in symbols $\HC = \karalg{\BS}$. We call $\HC$ the 
\emph{diagrammatic category of Soergel bimodules}. In other words, in the passage 
from $\BS$ to $\HC$ we first allow direct sums and grading shifts (restricting 
to degree preserving homomorphisms) and then the taking of direct summands. The 
following properties can be found in \cite[Lemma 6.24, Theorem 6.25 and 
Corollary 6.26]{EW2}:

\begin{thmlab}[Properties of $\HC$]
   \label{thmDiagProps}
   Let $k$ be a complete local ring (e.g. a field or the $p$-adic 
   integers $\Z_p$).
   \begin{enumerate}
      \item $\HC$ is a Krull-Schmidt category.
      \item For all $w \in W$ there exists a unique, indecomposable object $B_w 
            \in \HC$ which is a direct summand of $\expr{w}$ for any reduced 
            expression $\expr{w}$ of $w$ and which is not isomorphic to a grading 
            shift of any direct summand of any expression $\expr{v}$ for $v < w$. 
            The object $B_w$ does not depend up to isomorphism on the reduced 
            expression $\expr{w}$ of $w$.
      \item The set $\{ B_w \; \vert \; w \in W\}$ gives a complete set of 
            representatives of the isomorphism classes of indecomposable objects 
            in $\HC$ up to grading shift.
      \item There exists a unique isomorphism of $\Z[v, v^{-1}]$-algebras
            \[ \ch: \Galg{\HC} \longrightarrow \heck \]
            sending $[B_s]$ to $\kl{s}$ for all $s \in S$, where $\Galg{\HC}$ 
            denotes the split Grothendieck group of $\HC$. (We view $\Galg{\HC}$ as a
            $\Z[v, v^{-1}]$-algebra as follows: the monoidal structure
            on $\HC$ induces a unital, associative multiplication and $v$
            acts via $v[B] \defeq [B(1)]$ for an object $B$ of $\HC$.)
   \end{enumerate}
\end{thmlab}

It should be noted that we do not have a diagrammatic presentation of $\HC$ 
as determining the idempotents in $\BS$ is usually extremely difficult. 

Observe that $\overline{(-)}$ extends to a contravariant equivalence of the
graded, $R$-linear, additive, monoidal category $\HC$ sending $B_w (n)$ to
$B_w (-n)$ for all $n \in \Z$ and $w \in W$.

In order to explicitly give the isomorphism in the last part of \Cref{thmDiagProps}, 
we need to introduce some more notation. For $x \in W$, let $\HCnl{x}$ be the
quotient category of $\HC$ by the $2$-sided ideal of morphisms factoring 
through any grading shift of a reduced expression $\expr{y}$ for some $y < x$. 
Write $\Homnl{x}(-,-)$ for homomorphism spaces in $\HCnl{x}$. In $\HCnl{x}$ any
two reduced expressions for $x$ become canonically isomorphic. 
We denote the image of any reduced expression for $x$ in $\HCnl{x}$ by $x$ 
as well. Under the assumptions of \Cref{thmDiagProps} we can define the 
\emph{diagrammatic character} on an object $B$ of $\HC$ as follows:
\begin{alignat*}{1}
   \ch: \Galg{\HC} &\longrightarrow \heck \text{,}\\
               [B] &\longmapsto \sum_{w \in W} \grk 
               \Homnl{w}^{\bullet}(B, w) \std{w}
\end{alignat*}
and extend $\Z[v, v^{-1}]$-linearly. In the last definition $\grk$ denotes 
the graded rank of the free $R$-module of homomorphisms of all degrees  
\[ \Homnl{w}^{\bullet}(B_x, w) \defeq \bigoplus_{n \in \Z} 
   \Homnl{w}(B_x, w(n)) \text{.} \]

\section{The \texorpdfstring{$p$}{p}-Canonical Basis and Intersection Forms}
\label{secIntForm}

In this section we recall the definition of the $p$-canonical basis of 
the Hecke algebra (see \cite{WParityOberwolfach}) and explain how to 
calculate it using intersection forms. Fix a field $k$ of characteristic 
$p \geqslant 0$ and the realization ${}^{k} V$ of $(W, S)$. We use 
this realization to define $\HC$.

It is an interesting question what basis of the Hecke algebra the classes of
the self-dual indecomposable objects in $\HC$ correspond to.
The answer is given for $k = \R$ by \emph{Soergel's conjecture} which
Elias and the second author proved in \cite{EW1}. 
\begin{thmlab}[Elias-W. 2013]
   \label{thmSoergelConj}
   $\ch([B_w]) = \kl{w}$ for all $w \in W$.
\end{thmlab}

This illustrates that the basis of self-dual indecomposable objects in $\HC$ 
gives an extremely interesting basis of $\heck$ for $k = \R$ and 
motivates our definition of the $p$-canonical basis.

\begin{defn}
   Define $\pcan{w} = \ch([B_w])$ for all $w \in W$ where $\ch: \Galg{\HC} 
   \overset{\cong}{\longrightarrow} \heck$ is the isomorphism of $\Z[v, v^{-1}]$-algebras
   introduced earlier, and $p$ denotes the characteristic of $k$ as above.
\end{defn}

\Cref{thmDiagProps} implies that $\{ \pcan{w} \; \vert \; w \in W \}$
gives a basis of $\heck$, called the \emph{$p$-canonical basis}. As will become 
clearer later, the $p$-canonical basis depends only on the type of the
root system chosen and on the characteristic $p$ of the field $k$.

Next, we are going to explain how to use intersection forms to explicitly calculate
the $p$-canonical basis.  In order to calculate $\pcan{w}$ we proceed by 
induction on $l(w)$. The induction start is given by $\pcan{e} = \kl{e} = \std{e}$. 
Assume that we have already calculated $\pcan{v}$ for all $v < w$. Let 
$\expr{w}$ be an arbitrary reduced expression for $w$. According to 
\Cref{thmDiagProps}, we need to decompose $\expr{w}$ into indecomposable 
objects $B_x(n)$ for $x \in W$ and $n \in \Z$ in $\HC$. 
For this we need local intersection forms:

Write $\Homnl[k]{x}(-,-)$ for the homomorphism spaces in $k \otimes_R \HCnl{x}$
where we kill the action of the unique maximal ideal of all polynomials of positive
degree. Since $\Homnl{x}^{\bullet}(\expr{w},x)$ is a graded free $R$-module with 
basis (see \Cref{secLL}):
\[ \{ \LL{w}{e} \; \vert \; \expr{e} \text{ is a subexpression of } \expr{w}
 \text{ expressing } x \} \text{,} \]
$\Homnl[k]{x}^{\bullet}(\expr{w},x)$ is a graded $k$-vector space on the 
same basis.

For an arbitrary expression $\expr{w}$ in $S$ and $x \in W$, consider
the $k$-bilinear $\Hom$-pairing
\begin{alignat*}{2}
   \Homnl[k]{x}^{\bullet}(x, \expr{w}) &\times \Homnl[k]{x}^{\bullet}(\expr{w},x) 
      &&\longrightarrow \Endnl[k]{x}^{\bullet}(x) = k \text{,} \\
   (f &, g) &&\longmapsto g \circ f \text{.}
\end{alignat*}
Observe that $\Endnl[k]{x}^{\bullet}(x)$ is concentrated in degree $0$ and that 
the duality $\overline{(-)}$ on $\HC$ gives an isomorphism between 
$\Homnl[k]{x}^{\bullet}(x, \expr{w})$ and 
$\overline{\Homnl[k]{x}^{\bullet}(\expr{w},x)}$. This allows us to define:
 
\begin{defn}
   The \emph{local intersection form of $\expr{w}$ at $x$} is the $k$-bilinear
   pairing on the graded free $k$-vector space 
   $\Homnl[k]{x}^{\bullet}(\expr{w},x)$ given by
   \begin{alignat*}{2}
      I_{\expr{w}, x}: \Homnl[k]{x}^{\bullet}(\expr{w},x) &\times 
                       \Homnl[k]{x}^{\bullet}(\expr{w},x) &&\longrightarrow 
                       \Endnl[k]{x}(x) = k \text{,}\\
                       (f &, g) &&\longmapsto g \circ \overline{f} \text{.}
   \end{alignat*}
\end{defn}

The local intersection form of $\expr{w}$ at $x$ can be split up into degree pieces
as follows: Since $\Endnl[k]{x}^{\bullet}(x)$ is concentrated in degree $0$,
a homomorphism in $\Homnl[k]{x}^{\bullet}(\expr{w},x(d))$ for some $d \in \Z$
can only pair non-trivially with elements of $\Homnl[k]{x}^{\bullet}(\expr{w},x(-d))$.
The \emph{$d$-th grading piece of the intersection form} can thus be defined
as:
\begin{alignat*}{2}
  I_{\expr{w}, x}^d: \Homnl[k]{x}(\expr{w}, x(-d)) &\times \Homnl[k]{x}(\expr{w},x(d))
        &&\longrightarrow \Endnl[k]{x}(x(d)) = k
\end{alignat*}

Finally, the \emph{graded rank} of $I_{\expr{w}, x}$ is denoted by 
$n_{x, w} \in \Z_{\geqslant 0}[v, v^{-1}]$ and defined as
\[ n_{x, w} \defeq \sum_{d \in \Z} \rk(I_{\expr{w}, x}^d) v^d \text{.} \]
The following lemma illustrates the importance of intersection
forms for the calculation of the $p$-canonical basis and follows 
from an argument similar to \cite[Lemma 3.1]{JMW1}: 

\begin{lem}
   The multiplicity of $B_x$ in $\expr{w}$ in $\HC$ is given
   by the graded rank of $I_{\expr{w}, x}$.
\end{lem}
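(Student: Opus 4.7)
The plan is to reduce the computation of the multiplicity to a rank calculation for the composition pairing via the Krull-Schmidt decomposition of $\expr{w}$. First I write
\[ \expr{w} \;\cong\; \bigoplus_{y \in W,\, n \in \Z} B_y(n)^{\oplus m_{y,n}} \]
in $\HC$; the target identity is that $n_{x, w}$ equals the graded multiplicity polynomial of $B_x$ in $\expr{w}$, namely $\sum_n m_{x, n} v^n$.

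Next I would pass to $\HCnl{x}$ and base-change to $k$. By construction of the quotient, $B_y$ becomes zero whenever $y < x$, while $B_x$ is canonically identified with the image of any reduced expression for $x$; crucially $\Endnl[k]{x}(x) = k$ is concentrated in degree $0$. Because $\overline{B_y(n)} \cong B_y(-n)$, the pairing $(f, g) \mapsto g \circ \overline{f}$ decomposes into blocks indexed by summands of $\expr{w}$, and the block at $(y, n)$ is the composition pairing
\[ \Homnl[k]{x}^{\bullet}(x, B_y(-n)) \times \Homnl[k]{x}^{\bullet}(B_y(-n), x) \longrightarrow \Endnl[k]{x}(x) = k. \]

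The key categorical input is a Krull-Schmidt argument showing that this block vanishes identically for $y \neq x$. Indeed, if $\phi \colon x \to B_y(-n)$ and $\psi \colon B_y(-n) \to x$ satisfied $\psi \circ \phi \neq 0$, then $\psi \circ \phi$ would be a unit in $k$, so after rescaling $\psi$ one has $\psi \circ \phi = \id_x$, exhibiting $x$ as a direct summand of $B_y(-n)$ in the base-changed quotient. Lifting this idempotent through the surjections $\End_{\HC}(B_y) \twoheadrightarrow \Endnl{x}(B_y) \twoheadrightarrow \Endnl[k]{x}(B_y)$ — possible because $\End_{\HC}(B_y)$ is a (graded) local ring with residue field $k$, so idempotents lift modulo the maximal ideal — would realize $B_x$ as a direct summand of $B_y$ in $\HC$, contradicting indecomposability of $B_y$ together with $B_x \not\cong B_y$.

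For the remaining block at $y = x$, the object $B_x(-n)$ is identified with $x(-n)$ in the base-changed quotient, and the composition pairing is a one-dimensional non-degenerate pairing sitting in the correct graded piece. Summing the contributions over all $m_{x, n}$ copies of $B_x(n)$ (and tracking how shift-by-$n$ translates to the graded rank via the convention $v[B] = [B(1)]$) yields $n_{x, w} = \sum_n m_{x, n} v^n$. The main obstacle is the idempotent-lifting step: one must use the local structure of $\End_{\HC}(B_y)$ to argue that a splitting in the base-changed quotient forces one in $\HC$. Once that is established, the remaining verification reduces to grading bookkeeping and the perfect-pairing property for the $(x, n)$-blocks, as in the analogous argument of \cite[Lemma 3.1]{JMW1}.
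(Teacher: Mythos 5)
The paper offers no argument for this lemma beyond the citation of \cite[Lemma 3.1]{JMW1}, and the overall shape of your proof --- decompose $\expr{w}$ by Krull--Schmidt, block-diagonalize the pairing with respect to the summands $B_y(n)$, kill the blocks with $y \neq x$, and read off the $B_x(n)$-blocks --- is exactly the shape of that argument. The problem is the step that is supposed to kill the off-diagonal blocks. From $\psi \circ \phi = \id_x$ you form the idempotent $e = \phi \circ \psi$ in $\Endnl[k]{x}(B_y(-n))$ and propose to lift it to $\End_{\HC}(B_y)$ in order to split $B_x$ off $B_y$. But $\End_{\HC}(B_y)$ is local (this is precisely the content of $B_y$ being indecomposable in a Krull--Schmidt category), and any quotient of a local ring is local, so both $\End_{\HC}(B_y)$ and $\Endnl[k]{x}(B_y(-n))$ contain no idempotents other than $0$ and $1$. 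Hence lifting $e$ can never exhibit a proper summand; what your computation actually shows is that $e = 1$, i.e.\ that $\phi$ and $\psi$ are mutually inverse isomorphisms between the images of $B_y(-n)$ and of $B_x$ in $k \otimes_R \HCnl{x}$. Nothing in your argument excludes this possibility, and excluding it is exactly the point at issue, so the contradiction you claim does not yet exist.

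The repair, which is how the argument of \cite[Lemma 3.1]{JMW1} runs, is to work in $\HC$ with the radical rather than with idempotents in the quotient. Lift $\phi$ and $\psi$ to degree-zero morphisms $B_x \rightarrow B_y(-n)$ and $B_y(-n) \rightarrow B_x$ in $\HC$, using that $\expr{x} \cong B_x \oplus (\text{summands } B_z(m) \text{ with } z < x)$, so that the image of $\expr{x}$ in the quotient is the image of $B_x$. The evaluation map $\End_{\HC}(B_x)^0 \rightarrow \Endnl[k]{x}(x) = k$ is a surjection of rings onto a field; its kernel is a proper two-sided ideal of a local ring, hence consists of non-units, hence equals the radical. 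Therefore a composite $B_x \rightarrow B_y(-n) \rightarrow B_x$ whose image in $k$ is nonzero is already invertible in $\End_{\HC}(B_x)$, which makes $B_x$ a direct summand of $B_y(-n)$ in $\HC$ and contradicts the indecomposability of $B_y$ together with $B_x \not\cong B_y(m)$ for $y \neq x$. Equivalently: by Fitting's lemma any endomorphism of $B_x$ factoring through $B_y(-n)$ with $y \neq x$ is a non-isomorphism, hence lies in the radical, hence dies in $\Endnl[k]{x}(x) = k$. With this substitution your remaining bookkeeping --- the $y = x$ blocks each contributing a one-dimensional perfect pairing in the degree prescribed by the shift, giving $n_{x,w} = \sum_n m_{x,n} v^n$ --- goes through.
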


After calculating the graded ranks of all $I_{\expr{w}, x}$ for $x < w$,
we can write for $\expr{w} = s_1 s_2 \dots s_n$:
\[ \kl{s_1} \kl{s_2} \dots \kl{s_n} = \pcan{w} + \sum_{x < w} n_{x, w} \pcan{x} \text{.} \]

\begin{remark}
By comparing the intersection forms over $\Q$ and $k$, one may deduce that 
one only needs to calculate the graded ranks of $I_{\expr{w}, x}$
for those $x$ such that $\kl{x}$ occurs with a non-trivial coefficient
when expressing $\kl{s_1} \kl{s_2} \dots \kl{s_n}$ in terms of the Kazhdan-Lusztig 
basis.
\end{remark}

In order to determine $\pcan{w}$ we have to invert the matrix 
$(n_{x, y})_{x, y \leqslant w}$ which is upper triangular with ones on the 
diagonal in any total order refining the Bruhat order.

Finally, it should be noted that in practice one calculates the intersection 
form once over $\Z$ and reduces modulo different primes.

\begin{remark}
   There are other ways to calculate the $p$-canonical basis (which, however, 
   are much more difficult in practice).
   \begin{enumerate}
      \item If one can describe the geometry of the corresponding Schubert
            varieties quite explicitly, one can do calculations using parity
            sheaves (see \cite[appendix]{BW}).
      \item In \cite{FW} Fiebig and the second author show that for a field $k$
            of characteristic $p$ (or more generally a complete local PID), the 
            Braden-MacPherson algorithm on the Bruhat graph allows one to compute 
            the $p$-canonical basis.
   \end{enumerate}
\end{remark}

\subsection{Calculations in the nil Hecke Ring}
\label{secNilHeck}

In \cite{HW} Xuhua He and the second author reduce the calculation of certain
entries in the intersection form to a simple formula in the nil Hecke ring. Instead
of going into too much detail, we will try to give a survey of these results.

First, recall the definition of the nil Hecke ring. Let $Q$ be the field of fractions
of $R$. Denote by $Q * W$ the smash product. In other words, $Q * W$ is
the free left $Q$-module with basis $\{\delta_w \; \vert \; w \in W \}$ and
multiplication given by
\[ (f \delta_x) (g \delta_y) = f(xg) \delta_{xy} \]
for $f, g \in Q$ and $x, y \in W$. Inside $Q * W$, we consider the elements
\[ D_s = \frac{1}{\rt{s}} (\delta_{e} - \delta_s) 
       = (\delta_{e} + \delta_{s}) \frac{1}{\rt{s}} \]
which satisfy the following relations:
\begin{align*}
   D_s^2 &= 0 \text{,}\\
   D_s f &= (sf) D_s + \partial_s(f) \quad \text{for all } f \in Q \text{,}\\
   \underbrace{D_s D_t D_s \dots}_{m_{s, t} \text{ terms}} &= 
      \underbrace{D_t D_s D_t \dots}_{m_{s,t} \text{ terms}} \ \text{.}
\end{align*}
The last relation ensures that for $y \in W$ and any reduced expression
$\expr{y} = (s_1, s_2, \dots, s_n)$ of $y$ we get well-defined elements
\[ D_y = D_{s_1} D_{s_2} \dots D_{s_n} \in Q * W  \text{.}\]
The nil Hecke ring $\NH$ is the left $R$-submodule of $Q * W$ generated by 
$\{ D_y \; \vert \; v \in W\}$.

Next, we briefly introduce gobbling morphisms. For any expression 
$\expr{w} = (s_1, s_2, \dots, s_n)$ in $S$, consider the following
$01$-sequence $\expr{e}$ with:
\[ e_i = 
   \begin{cases}
      1 & \text{if } w_{i-1} s_i > w_{i-1} \text{,} \\
      0 & \text{otherwise,}
   \end{cases}
\] where at each step $w_i$ is defined as in \Cref{secCox}.
Note that $(\expr{w}^{\expr{e}})_{\bullet}$ is the maximal element in $W$ expressible
as a subexpression of $\expr{w}$, and that the decoration of $\expr{e}$ consists 
entirely of $U1$'s and $D0$'s. Therefore any choice of light leaf morphism
$\LL{w}{e}$ has degree $l((\expr{w}^{\expr{e}})_{\bullet}) - l(\expr{w})$ and 
consists only of $2m_{st}$-valent and trivalent vertices. Denote by $G_{\expr{w}}$ the
image of $\LL{w}{e}$ in $\HCnl{(\expr{w}^{\expr{e}})_{\bullet}}$. The morphism 
$G_{\expr{w}}$ is called a \emph{gobbling morphism} and can be characterized 
as follows (see \cite[Proposition 3.4]{HW}):

\begin{prop}
   Let $\expr{w}$, $\expr{e}$ be as above. Any morphism 
   $\expr{w} \rightarrow (\expr{w}^{\expr{e}})_{\bullet}$ in 
   $\HCnl{(\expr{w}^{\expr{e}})_{\bullet}}$ given by diagrams consisting only 
   of $2m_{st}$-valent vertices and $l(\expr{w}) - 
   l((\expr{w}^{\expr{e}})_{\bullet})$ trivalent vertices is equal to $G_{\expr{w}}$.
\end{prop}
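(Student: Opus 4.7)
I would approach this uniqueness statement in three steps, with the combinatorics of the light leaves basis doing the main work.

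First, I would verify the degree of any such diagram. With exactly $l(\expr{w}) - l(x)$ trivalent vertices (each of degree $-1$), arbitrarily many $2m_{\cols, \colt}$-valent vertices (each of degree $0$), and no polynomial decorations or dots, the total degree is $l(x) - l(\expr{w})$, where $x = (\expr{w}^{\expr{e}})_{\bullet}$. This matches $\deg G_{\expr{w}} = \df(\expr{e})$, since the greedy $\expr{e}$ consists only of $U1$'s and $D0$'s, with $|D0| = l(\expr{w}) - l(x)$.

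Second, I would show that the $k$-vector space $\Homnl{x}^{l(x) - l(\expr{w})}(\expr{w}, x)$ is one-dimensional, spanned by (the image of) $G_{\expr{w}}$. By \Cref{thmdL}, applied in $\HCnl{x}$, the space $\Homnl{x}^{\bullet}(\expr{w}, x)$ is a free graded $R$-module with basis $\{\LL{w}{e'}\}$ indexed by subexpressions $\expr{e}'$ of $\expr{w}$ with $(\expr{w}^{\expr{e}'})_{\bullet} = x$, and $\deg \LL{w}{e'} = \df(\expr{e}')$. Denoting the counts of $U1, U0, D0, D1$ decorations of such an $\expr{e}'$ by $a, b, c, d$, one has $a + b + c + d = l(\expr{w})$, $a - d = l(x)$, and $\df(\expr{e}') = b - c$. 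A basis element contributes to degree $l(x) - l(\expr{w})$ only with a polynomial coefficient of degree $l(x) - l(\expr{w}) - \df(\expr{e}') \geq 0$; substituting the relations above, this becomes $2(b + d) \leq 0$, forcing $b = d = 0$. Hence $\expr{e}' = \expr{e}$ is the greedy subexpression, the polynomial coefficient lies in $R^{0} = k$, and the space is one-dimensional.

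Third, any diagram $D$ of the specified form therefore represents $c \cdot G_{\expr{w}}$ in $\HCnl{x}$ for some $c \in k$, and the remaining task is to show $c = 1$. For this I would observe that the only relations in $\BS$ that rewrite one diagram consisting purely of trivalent and $2m_{\cols, \colt}$-valent vertices to another of the same form, without introducing dots or polynomial decorations, are Frobenius associativity, two-colour associativity, and the Zamolodchikov (higher braid) relations — none of which introduce scalars. Any two diagrams of the specified form can, after projecting to $\HCnl{x}$, be connected through this type-preserving subsystem, so the scalar $c$ is intrinsic to the form and not to the particular choice of diagram. Since $\LL{w}{e} = G_{\expr{w}}$ is itself a diagram of the specified form with coefficient $1$ on the greedy light leaf by construction, we conclude $c = 1$.

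The main obstacle is this last step: pinning the coefficient to $1$ rather than merely to some element of $k$. The degree count and the defect inequality giving one-dimensionality are essentially bookkeeping once the light leaves basis is in hand, but rigorously establishing that type-preserving relations suffice to identify $c = 1$ — or equivalently, a direct pairing argument computing both $D \circ \overline{G_{\expr{w}}}$ and $G_{\expr{w}} \circ \overline{G_{\expr{w}}}$ inside $\End_{\HCnl{x}}(x) = k$ and comparing — is where real work is required.
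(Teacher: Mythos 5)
Your first two steps are correct and are indeed just bookkeeping once the light leaves basis is available: the degree of any dot-free, polynomial-free diagram with $l(\expr{w})-l(x)$ trivalent vertices is $l(x)-l(\expr{w})$, and writing an element of $\Homnl{x}^{\bullet}(\expr{w},x)$ (free over $R$ on the $\LL{w}{e'}$ of degree $\df(\expr{e}')$) in that degree forces, by your inequality $2(b+d)\leqslant 0$, that only the decorated sequence with no $U0$'s and no $D1$'s contributes; such a sequence is forced step by step to be the greedy $\expr{e}$, so the graded piece is one-dimensional and any diagram $D$ of the stated shape equals $c\,G_{\expr{w}}$ for a scalar $c$. (For the record, the paper does not prove this proposition at all; it quotes it from \cite[Proposition 3.4]{HW}, so there is no internal proof to compare with, and the normalization issue below is exactly the part that reference has to work for.)

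The genuine gap is the step you flag yourself, $c=1$, and neither of your two suggestions closes it. For (a), the assertion that any two such diagrams can be connected, after projecting to $\HCnl{x}$, by the ``scalar-free'' relations alone (Frobenius and two-colour associativity, Zamolodchikov) is unsubstantiated and is essentially the statement to be proved: diagram-independence of these morphisms modulo lower terms. In practice, rewriting one dot-free diagram into another uses relations (e.g.\ Jones--Wenzl) whose extra terms carry dots and only vanish in the quotient, so one cannot argue within a type-preserving subsystem without first proving a confluence statement of the same strength as the proposition. For (b), the pairing you propose is vacuous for degree reasons: both $\overline{G_{\expr{w}}}$ and $D$ have degree $l(x)-l(\expr{w})$, so $D\circ\overline{G_{\expr{w}}}$ and $G_{\expr{w}}\circ\overline{G_{\expr{w}}}$ lie in degree $2\bigl(l(x)-l(\expr{w})\bigr)$ of $\Endnl{x}^{\bullet}(x)\cong R$, which is concentrated in non-negative degrees; hence whenever $\expr{w}$ is not reduced both compositions are $0$ and the comparison says nothing about $c$. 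To pin down $c$ one needs a further input --- for instance evaluating the diagram under localization to standard bimodules and computing the matrix entry attached to the subexpression $\expr{e}$, or the inductive analysis carried out in \cite{HW} --- and that input is missing from your outline.
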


From this they deduce the canonicity of any light leaf morphism $\LL{w}{f}$
in $\HCnl{(\expr{w}^{\expr{f}})_{\bullet}}$ indexed by a $01$-sequence $\expr{f}$ 
without $D1$'s in its decoration. (This follows because the morphism is given 
as the composition of a sequence of dots on strands corresponding to $U0$'s 
followed by a gobbling morphism.)

Finally, we come to their formula in the nil Hecke ring for certain entries of 
the intersection form. Let $\expr{e^1}$ and $\expr{e^2}$ be two subexpressions of 
$\expr{w}$.  Assume that $\expr{e^1}$ and $\expr{e^2}$ both express the same element
$x \in W$ (i.e. $x = (\expr{w}^{\expr{e^1}})_{\bullet} = (\expr{w}^{\expr{e^2}})_{\bullet}$)
and that their decorations do not contain any $D1$. Define an element of the nil Hecke 
ring as the product $f(\expr{e^1}, \expr{e^2}) = f_1 f_2 \dots f_m$ where
\[ f_i = 
   \begin{cases}
      \rt{s_i} & \text{if } e^1_i = e^2_i = U0 \text{,} \\
      1 & \text{if exactly one of } e^1_i \text{ and } e^2_i \text{ is } U0 \text{,} \\
      D_{s_i} & \text{otherwise.}
   \end{cases}
\]
Denote by $d(\expr{e^1}, \expr{e^2}) \in R$ the coefficient of 
$D_{(\expr{w}^{\expr{e^1}})_{\bullet}}$ in $f(\expr{e^1}, \expr{e^2})$. The main
result is \cite[Theorem 5.1]{HW}:

\begin{thm}
   For $\expr{e^1}$ and $\expr{e^2}$ as above, we have
   \[ I_{\expr{w}, x}(\LL{w}{e^1}, \LL{w}{e^2}) = d(\expr{e^1}, \expr{e^2}) \text{.}\]
\end{thm}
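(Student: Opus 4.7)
My strategy is to compute the composition $\LL{w}{e^2} \circ \overline{\LL{w}{e^1}} \in \Endnl[k]{x}^{\bullet}(x) = k$ diagrammatically and identify it with the scalar $d(\expr{e^1}, \expr{e^2})$. The crucial input is the canonicity statement in the proposition preceding the theorem: since neither $\expr{e^1}$ nor $\expr{e^2}$ contains a $D1$, each light leaf $\LL{w}{e^j}$ in $\HCnl{x}$ factors uniquely as the dots on the $U0$-positions of $\expr{e^j}$ stacked beneath the gobbling morphism on the remaining $U1$- and $D0$-positions. This forces the composition, modulo lower Bruhat terms, into a shape which can be analysed position by position.

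The plan is then to read off a contribution at each $i \in \{1, \dots, m\}$ from the pair $(e^1_i, e^2_i)$. If both entries are $U0$, the dot in $\overline{\LL{w}{e^1}}$ meets the dot in $\LL{w}{e^2}$ and the Barbell Relation~\eqref{eqAlpha} produces $\rt{s_i}$, matching $f_i = \rt{s_i}$. If exactly one entry is $U0$, a dot meets the strand or trivalent vertex produced by the neighbouring gobbling morphism, and the Frobenius Unit~\eqref{eqUnit} collapses the configuration to a single unbroken strand, contributing $1 = f_i$. If neither entry is $U0$, two gobbling pieces meet along strand $i$ and the claim to prove is that their net contribution is $D_{s_i} \in \NH$.

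To turn the local analysis into a global identity in $\NH$, I would exploit the realisation of $\HC$ by Bott--Samelson bimodules: after localising $R$ to its fraction field $Q$, this realisation takes values in a category closely related to $Q * W$, in which polynomials act by multiplication, $2m_{st}$-valent vertices encode the braid relations $D_s D_t D_s \cdots = D_t D_s D_t \cdots$ in $\NH$, and the Nil Hecke Relation~\eqref{eqPolSlide} encodes the commutation $D_s f = (sf) D_s + \partial_s(f)$. Under this translation the strand-wise matching shows that the composition evaluates to $f_1 f_2 \cdots f_m = f(\expr{e^1}, \expr{e^2}) \in \NH$. Passing to $\HCnl{x}$ kills all contributions indexed by elements strictly below $x$, so only the coefficient of $D_x$ survives in $\Endnl[k]{x}^{\bullet}(x)$, and this coefficient is by definition $d(\expr{e^1}, \expr{e^2})$.

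The main obstacle is the third case: verifying that two pieces of gobbling meeting on a single strand contribute exactly $D_{s_i}$ modulo lower Bruhat terms. This requires a careful computation with the inductive construction of the gobbling morphisms, using Two-colour Associativity, the Jones--Wenzl relations, and the Nil Hecke Relation~\eqref{eqPolSlide} in concert. It is precisely the consistency of this local-to-global procedure, built on the uniqueness of the gobbling morphism established in the preceding proposition, that makes the strand-by-strand identification possible and underpins the formula.
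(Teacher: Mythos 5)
The paper itself contains no proof of this theorem: it is quoted verbatim from \cite[Theorem 5.1]{HW}, so there is no in-paper argument to compare against. Your outline does run along the same general lines as the proof in that reference: factor each $D1$-free light leaf as dots on the $U0$-strands followed by a gobbling morphism (which is exactly what the canonicity discussion following the preceding proposition provides), then evaluate the pairing after localising, where Bott--Samelson objects split into standard modules and the diagrammatic generators act through $Q * W$, so that the quotient $\HCnl{x}$ amounts to extracting the coefficient of $D_x$.

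As a proof, however, the proposal has a genuine gap, and you name it yourself: the entire content of the theorem is concentrated in the local steps you defer. The claim that two pieces of gobbling meeting along strand $i$ contribute $D_{s_i}$ modulo terms dying in $\HCnl{x}$ is not a routine check; it is the heart of the statement, and without it nothing is established. The mixed case is also subtler than you assert: when exactly one of $e^1_i, e^2_i$ is $U0$, the dot need not hit a trivalent vertex --- it can run into a $2m_{s,t}$-valent vertex of the gobbling morphism, where the Frobenius unit does not apply and one must instead use the Jones--Wenzl relation, producing extra diagrams that have to be shown to factor through elements $< x$ and hence vanish in the quotient. Finally, the local-to-global step is asserted rather than argued: the factors $f_i$ do not commute (the barbell polynomials $\rt{s_i}$ must be slid through the $D_{s_j}$'s via the nil Hecke relation), so one needs an induction along the expression showing that the diagrammatic evaluation equals the \emph{ordered} product $f_1 f_2 \cdots f_m$ in $Q * W$, together with a justification that passing to $\HCnl{x}$ really does correspond to taking the coefficient of $D_x$. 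These are precisely the points where the argument of \cite{HW} does its work; what you have is a sound plan of attack, not yet a proof.
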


This theorem gives a combinatorial formula for some entries in the intersection 
form. Sometimes one is lucky, and it can be used to calculate the complete 
intersection form, as we will see in examples below.

\section{First Properties of the \texorpdfstring{$p$}{p}-Canonical Basis}
\label{secProp}

The goal of this section is to prove elementary properties of the $p$-canonical
basis and to compare it to the Kazhdan-Lusztig basis. For this we need a 
$p$-modular system. Let $\rO$ be a complete local ring with residue field $k$ of 
characteristic $p > 0$ and quotient field $\K$ of characteristic $0$. Fix the
realization $\pre{\rO}{V}$ of $(W, S)$ and use it to define $\HC$. For $x \in W$ 
we will denote by $B_x$ (resp. $\pre{k}{B}_x$ or $\pre{\K}{B}_x$) the 
indecomposable object in $\HC$ (resp. $\pre{k}{\HC} \defeq \HC \otimes_{\rO} k$ 
or $\pre{\K}{\HC} \defeq \HC \otimes_{\rO} \K$).

The following lemma shows that indecomposable objects in $\HC$ remain
indecomposable when passing to $\pre{k}{\HC}$.
\begin{lem}
   \label{lemModRed}
   We have for all $w \in W$: $B_w \otimes_{\rO} k \cong \pre{k}{B}_w$.
\end{lem}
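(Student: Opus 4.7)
The plan is to argue by induction on the Bruhat order on $W$. The base case $w = e$ is immediate, since both $B_e$ and $\pre{k}{B}_e$ are the monoidal unit (the empty expression), and base change preserves this unit.

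For the inductive step, fix $w \in W$ and assume the claim for all $v < w$. Choose a reduced expression $\expr{w}$ for $w$. By \Cref{thmDiagProps}(2), we may fix a Krull--Schmidt decomposition
\[
\expr{w} \;\cong\; B_w \oplus Q \qquad \text{in } \HC,
\]
where $Q$ is a direct sum of grading shifts of objects $B_v$ with $v < w$. The reduction of scalars $- \otimes_\rO k : \HC \to \pre{k}{\HC}$ is additive and monoidal, and sends each generator $\expr{s}$ to itself; applying it yields
\[
\expr{w} \;\cong\; (B_w \otimes_\rO k) \;\oplus\; (Q \otimes_\rO k) \qquad \text{in } \pre{k}{\HC}.
\]
By the inductive hypothesis, $Q \otimes_\rO k$ is a direct sum of grading shifts of $\pre{k}{B}_v$ with $v<w$. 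Thus it suffices to prove (i) that $B_w \otimes_\rO k$ is indecomposable in $\pre{k}{\HC}$, and (ii) that $B_w \otimes_\rO k$ is not isomorphic to any grading shift of $\pre{k}{B}_v$ with $v < w$. Together with the classification in \Cref{thmDiagProps}(3), these force $B_w \otimes_\rO k \cong \pre{k}{B}_w$.

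For (i), the key input is that $\rO$ is complete local and $\HC$ is Krull--Schmidt (\Cref{thmDiagProps}(1)), so the primitive idempotent $e_w \in \End^0_\HC(\expr{w})$ projecting onto $B_w$ has local endomorphism algebra $e_w \End^0_\HC(\expr{w}) e_w$. A standard idempotent-lifting argument — using that $\End^0_{\pre{k}{\HC}}(\expr{w})$ is obtained from $\End^0_\HC(\expr{w})$ by reducing modulo the maximal ideal $\ideal{m} \subseteq \rO$ (which lies in the Jacobson radical of the local endomorphism algebra of each indecomposable summand) — shows that the image $\bar e_w$ remains primitive, so $B_w \otimes_\rO k$ is indecomposable.

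For (ii), apply the character map. Since $\ch[\expr{w}] = \kl{s_1}\kl{s_2}\cdots \kl{s_n}$ has $\std{w}$-coefficient equal to $1$ (a standard property of the Kazhdan--Lusztig basis over $\Z[v,v^{-1}]$ that holds independently of the coefficient field), and since by induction
\[
\ch[Q \otimes_\rO k] \;\in\; \sum_{v<w} \Z[v,v^{-1}]\,\std{v},
\]
the class $\ch[B_w \otimes_\rO k]$ must have $\std{w}$-coefficient $1$. On the other hand, for any $v < w$, the character $\ch[\pre{k}{B}_v(n)] = v^{-n}\pcan{v}$ lies in $\sum_{u \leqslant v}\Z[v,v^{-1}]\std{u}$ and involves no $\std{w}$-term. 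This rules out (ii) and pins down $B_w \otimes_\rO k \cong \pre{k}{B}_w$ (with no grading shift, by comparing $\std{w}$-coefficients).

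The main obstacle is step (i): we must ensure that the primitivity of $e_w$ is preserved under reduction mod $\ideal{m}$. This ultimately rests on the fact that morphism spaces in $\BS$ are free $R$-modules (hence flat over $\rO$) by the double leaves theorem (\Cref{thmdL}), so that the passage $\HC \rightsquigarrow \pre{k}{\HC}$ is well-behaved on endomorphism algebras and on idempotents. Once this base-change compatibility is in place, the remainder of the argument is a clean inductive bookkeeping exercise with the character map.
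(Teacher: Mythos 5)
Your proposal is correct and its key step is exactly the paper's: lifting idempotents along $\End_{\HC}(B_w) \rightarrow \End_{\HC}(B_w)\otimes_{\rO} k$ (equivalently, inside $\End_{\HC}(\expr{w})$), using that $\rO$ is complete local and that these $\End$-spaces are finitely generated $\rO$-modules because Hom spaces are free over $R$, via \cite[Theorem 21.31]{Lam}. The paper is simply more terse: it assumes $B_w\otimes_{\rO}k$ decomposable, lifts a nontrivial idempotent to the local ring $\End_{\HC}(B_w)$ for a contradiction, and leaves the identification of the resulting indecomposable object with $\pre{k}{B}_w$ implicit, whereas you spell that identification out by induction and a character comparison (where, to be fully precise, you should also note that the character of $B_w\otimes_{\rO}k$ is supported on $\{u \leqslant w\}$, so that the classification leaves only $v=w$ and shift $0$).
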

\begin{proof}
   Assume $B_w \otimes_{\rO} k$ is not indecomposable in $\pre{k}{\HC}$. Then
   there exists a non-trivial idempotent $e \in \End_{\pre{k}{\HC}}(B_w \otimes_{\rO} k)$.
   Since $\End_{\HC}(B_w)$ is a finitely generated $\rO$-module, we can use 
   idempotent lifting techniques for complete local rings (see 
   \cite[Theorem 21.31]{Lam}) and find an idempotent $\tilde{e} \in 
   \End_{\HC}(B_w)$ mapping to $e$ in 
   $\End_{\pre{k}{\HC}}(B_w \otimes_{\rO} k) \cong  \End_{\HC}(B_w) \otimes_{\rO} k$. 
   Since $B_w$ is indecomposable, this idempotent has to be trivial, a 
   contradiction.
\end{proof}

Some of the following properties can also be found in \cite{BW} and \cite{WRedCharVarTypeA}:

\begin{prop}
   \label{propPCanProps}
   For all $x, y \in W$ we have:
   \begin{enumerate}
      \item $\widebar{\pcan{x}} = \pcan{x}$, i.e. $\pcan{x}$ is self-dual,
      \item $\pcan{x} = \std{x} + \sum_{y < x} \p{h}_{y, x} \std{y}$ 
            with $\p{h}_{y, x} \in \Z_{\geqslant 0}[v, v^{-1}]$,
      \item $\pcan{x} = \kl{x} + \sum_{y < x} \p{m}_{y, x} \kl{y}$
            with self-dual $\p{m}_{y, x} \in \Z_{\geqslant 0}[v, v^{-1}]$,
      \item $\p{m}_{y, x} = \p{m}_{y^{-1}, x^{-1}}$ for all $x, y \in W$,
      \item $\p{m}_{y, x} = 0$ unless $\desc{L}(x) \subseteq \desc{L}(y)$
            and $\desc{R}(x) \subseteq \desc{R}(y)$ where $\desc{L}$ and
            $\desc{R}$ denote the left and right descent sets,
      \item $\pcan{x} \pcan{y} = \sum_{z \in W} \p{\mu}^{z}_{x, y} \pcan{z}$
            with self-dual $\p{\mu}^{z}_{x, y} \in \Z_{\geqslant 0}[v, v^{-1}]$,
      \item $\pcan{x} = \kl{x}$ for  $p \gg 0$ (i.e. there are only finitely many primes
            for which $\pcan{x} \neq \kl{x}$).
   \end{enumerate}
\end{prop}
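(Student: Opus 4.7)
The plan is to verify each property by translating the claim about $\pcan{x}$ into a statement about the indecomposable object $B_x \in \HC$ and then appealing to the character formula $\ch([B]) = \sum_w \grk \Homnl{w}^{\bullet}(B,w)\std{w}$ together with \Cref{thmDiagProps}, \Cref{thmSoergelConj} and \Cref{lemModRed}.

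For (1), the contravariant duality $\overline{(-)}$ on $\HC$ categorifies the bar involution and fixes each $B_w$ up to isomorphism by the uniqueness clause in \Cref{thmDiagProps}(2), giving $\overline{\pcan{x}} = \pcan{x}$. For (2), $B_x$ is a direct summand of any reduced expression $\expr{x}$, so a light-leaves/double-leaves analysis (\Cref{thmdL}) forces $\Homnl{w}^{\bullet}(B_x, w) = 0$ for $w \not\leq x$ and $\Homnl{x}^{\bullet}(B_x, x) \cong k$ concentrated in degree zero; the remaining coefficients are graded ranks of free $k$-modules, hence lie in $\Z_{\geqslant 0}[v,v^{-1}]$. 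For (3), combining (1) and (2) and inverting a unitriangular change of basis gives the expansion $\pcan{x} = \kl{x} + \sum_{y<x}\p{m}_{y,x}\kl{y}$ with self-dual $\p{m}_{y,x}$; positivity follows from base change, since by \Cref{lemModRed} the $\Hom$-modules are free $R$-modules whose graded ranks are preserved under $\otimes_{\rO}\K$, so $\pcan{x} = \ch(B_x\otimes_{\rO}\K)$, and the latter object decomposes in $\pre{\K}{\HC}$ as a non-negative combination of shifts of $\pre{\K}{B}_y$, each of character $\kl{y}$ by \Cref{thmSoergelConj}.

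For (4), the anti-involution of $\heck$ exchanging $\std{x}$ and $\std{x^{-1}}$ is categorified by the horizontal flip of diagrams, an anti-equivalence of $\HC$ that sends $B_x$ to $B_{x^{-1}}$ (the flip of a reduced expression for $x$ is a reduced expression for $x^{-1}$; invoke uniqueness in \Cref{thmDiagProps}(2)); applying the flip to the expansion in (3) yields the symmetry. For (5), I will invoke the identity $B_x \otimes B_s \cong B_x(1) \oplus B_x(-1)$ for $xs < x$ (which ultimately rests on $B_s \otimes B_s \cong B_s(1)\oplus B_s(-1)$) to conclude $\pcan{x}\kl{s} = (v+v^{-1})\pcan{x}$; expanding both sides in the Kazhdan-Lusztig basis and applying the standard multiplication formulas for $\kl{y}\kl{s}$ in the two cases $ys>y$ and $ys<y$, a comparison of the coefficient of $\kl{y}$ for $y$ with $ys > y$ forces $\p{m}_{y,x} = 0$; the left-descent statement is symmetric. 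For (6), Krull-Schmidt in $\HC$ gives $B_x\otimes B_y \cong \bigoplus_{z,n} B_z(n)^{\oplus m^n_z}$, so $\pcan{x}\pcan{y}=\sum_z\p{\mu}^z_{x,y}\pcan{z}$ with $\p{\mu}^z_{x,y}\in\Z_{\geqslant 0}[v,v^{-1}]$; self-duality transfers from $\pcan{x},\pcan{y},\pcan{z}$ to the structure constants.

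Part (7) is the main technical point. Fix $x$ together with a reduced expression $\expr{x}$. By \Cref{secIntForm} the multiplicities of each $B_y(d)$ in the decomposition of $\expr{x}$ are graded ranks of intersection forms $I_{\expr{x},y}^d$, and these forms may be represented by integer-valued matrices (for instance via the nil Hecke formulas of \cite{HW} applied to a $\Z$-form of the realization). Only finitely many primes divide an elementary divisor of any of these finitely many matrices, so for all other $p$ the mod-$p$ ranks equal the rational ranks and the $k$-decomposition of $\expr{x}$ specialises to the $\K$-decomposition; combined with \Cref{lemModRed} this forces $\pcan{x} = \kl{x}$ for almost all $p$. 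The main obstacle is really the identity $B_x\otimes B_s \cong B_x(1) \oplus B_x(-1)$ used in (5), which requires an honest diagrammatic argument (or citation to \cite{EW2}); once it is in hand the remaining assertions are elementary manipulations in $\heck$ together with base change.
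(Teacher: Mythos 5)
Your treatment of (2), (3), (4), (6) and (7) follows essentially the same route as the paper: multiplicity one of $B_x$ in a reduced expression via light leaves, non-negativity from graded ranks of free $R$-modules, base change $\ch(\pre{k}{B}_x)=\ch(B_x)=\ch(B_x\otimes_{\rO}\K)$ together with \Cref{thmSoergelConj} for (3), the horizontal flip for (4), Krull--Schmidt for (6), and integrality of the finitely many intersection forms for (7). Two small inaccuracies: in (2) the relevant $\Hom$-spaces are free graded $R$-modules of graded rank given by light leaves (so $\Homnl{x}^{\bullet}(B_x,x)\cong R$, of graded rank $1$, not $\cong k$); and in (1) the assertion that the flip duality ``categorifies the bar involution'' is exactly what has to be checked -- it does follow quickly (the transported map on $\heck$ is a $\Z$-linear, $v\mapsto v^{-1}$-semilinear ring endomorphism fixing every $\kl{s}$, and these generate $\heck$), but you should say this; the paper instead avoids the issue entirely by an induction on $l(x)$ using the self-dual decomposition of $\pre{k}{B}_s\pre{k}{B}_{sx}$.

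The genuine divergence, and the one real gap, is (5). You base it on the isomorphism $B_x\otimes B_s\cong B_x(1)\oplus B_x(-1)$ for $xs<x$, which you do not prove and which does not follow merely from $B_s\otimes B_s\cong B_s(1)\oplus B_s(-1)$: knowing $B_x$ is a summand of $YB_s$ only shows $B_xB_s$ is a summand of $YB_s(1)\oplus YB_s(-1)$, and extracting the claimed decomposition needs a further argument (for bimodules one uses that $B_x$ is induced from $R^s$ on the right; diagrammatically it is not a formal consequence, and \cite{EW2} does not state it in the generality used here). The paper sidesteps this entirely: \Cref{lemMultFormPCan} is proved inside $\heck$, by first computing $\ch(\pre{k}{B}_s\pre{k}{B}_{sx})$ to see $\p{m}_{sx,x}=0$, then applying the classical multiplication formula \eqref{eqnMultForm} to get
$\pcan{s}\pcan{x}\in(v+v^{-1})\pcan{x}+\sum_{y<x}\Z_{\geqslant 0}[v,v^{-1}]\pcan{y}$,
and finally killing the extra terms by evaluating at $v=1$ and acting on the trivial module; positivity from (2)--(3) is what makes this specialization argument work. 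Note the logical order is the reverse of yours: once the lemma is known, the categorical isomorphism you want follows from it via the isomorphism $\ch$ and Krull--Schmidt, so the decategorified statement is the cheaper input. As written, your (5) is conditional on an unestablished categorical fact; either supply its (nontrivial) diagrammatic proof or replace it by the paper's Hecke-algebra argument.
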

\begin{proof}
   \begin{enumerate}
   \item We proceed by induction on $l(x)$. For small $x$ the statement is clear as
         $\pcan{e} = \kl{e}$ and $\pcan{s} = \kl{s}$ for all $s \in S$ and all 
         primes $p$. Assume that we have shown that $\pcan{y}$ is self-dual for 
         $y < x$. Choose $s \in \desc{L}(x)$ and set $y = sx$. The characterization
         of $\pre{k}{B}_x$ in \Cref{thmDiagProps} implies that $\pre{k}{B}_x$ occurs
         with multiplicity one in $\pre{k}{B}_s \pre{k}{B}_y$
         and that $\overline{\pre{k}{B}_x(n)} = \pre{k}{B}_x(-n)$. Thus we can write
         \[ \pre{k}{B}_s  \pre{k}{B}_y = \pre{k}{B}_x \oplus 
            \bigoplus_{\substack{z < x \\ n \in \Z}} (\pre{k}{B}_z(n))^{\oplus a_{z, n}} \]
         with $a_{y, n} \in \Z_{\geqslant 0}$ for all $y < x$ and $n \in \Z$ and
         all but finitely many of the $a_{y, n}$ are zero. Applying the duality 
         $\overline{(-)}$ to both sides and using that the left hand side is self-dual 
         yields $a_{z, n} = a_{z, -n}$ for all $z < x$ and $n \in \Z$. 
         This implies
         \[ \pcan{s} \pcan{y} = \pcan{x} + \sum_{z < x} \p{\mu}_{z, y} \pcan{z} \]
         where $\p{\mu}_{z, y} = \sum_{n\in Z} a_{z, n}v^n \in \Z_{\geqslant 0}[v, v^{-1}]$ 
         is self-dual for all $z < x$. Therefore the self-duality of the
         left-hand side and our induction hypothesis imply the self-duality
         of $\pcan{x}$.
   \item The unicity in the characterization of $\pre{k}{B}_x$ in \Cref{thmDiagProps} 
         implies that it occurs with multiplicity $1$ in any reduced expression
         $\expr{x}$ of $x$. In the quotient $\HCnl{x}$ all other direct summands
         of $x$ are killed. Thus we get $\grk \Homnl{x}(B_x, x) = 1$. Note that
         the Laurent polynomials $\p{h}_{y, x}$ have non-negative 
         coefficients as they are given by graded ranks of free $R$-modules. 
   \item According to (2), $\kl{x}$ occurs precisely with coefficient $1$ in 
         $\pcan{x}$. The self-duality of the Laurent polynomials $\p{m}_{y, x}$
         follows from (1) and the self-duality of the Kazhdan-Lusztig basis.
         Since $\Homnl{w}^{\bullet}(B_x, w)$ is a free $R$-module, we get for
         $F \in \{k, \K\}$:
         \[ \Homnl{w}^{\bullet}(B_x, w) \otimes_{\rO} F \cong 
            \Hom_{\HCnl{w} \otimes_{\rO} F}^{\bullet}(B_x \otimes_{\rO} F, 
            w \otimes_{\rO} F) \text{.} \]
         This implies in particular using \Cref{lemModRed}:
         \[ \ch(\pre{k}{B}_x) = \ch(B_x) = \ch(B_x \otimes_{\rO} \K) \text{.} \]
         Thus the $\p{m}_{y, x}$ have non-negative coefficients as they come
         from decomposing $B_x \otimes_{\rO} \K$ into indecomposable objects of the
         form $\pre{\K}{B}_x$ in $\pre{\K}{\HC}$ whose character is given by $\kl{x}$
         by \cref{thmSoergelConj}.
   \item There is an equivalence $\iota$ on $\pre{k}{\HC}$ viewed as a $k$-linear category
         induced by the horizontal flip of Soergel graphs. It maps 
         $(s_1, s_2, \dots, s_m) \in \pre{k}{\HC}$ to $(s_m, \dots, s_2, s_1)$ and thus 
         $\pre{k}{B}_x$ to $\pre{k}{B}_{x^{-1}}$ for all $x \in W$.
         It is easy to see that $\iota$ descends to a well-known 
         $\Z[v, v^{-1}]$-linear anti-involution on $\heck$ sending $\kl{x}$ to 
         $\kl{x^{-1}}$ as well as $\pcan{x}$ to $\pcan{x^{-1}}$ for all 
         $x \in W$. By slight abuse of notation we will denote this map 
         also by $\iota$. Thus we have: $\ch \circ \iota = \iota \circ \ch$.
         Finally expressing $\pcan{x}$ in the Kazhdan-Lusztig basis and applying
         $\iota$ proves the claim after comparing coefficients.         
   \item The statement for left descent sets follows from \Cref{lemMultFormPCan} below
         and the fact that the Laurent polynomials $\p{m}_{y, x}$ have non-negative 
         coefficients. Using (4) we can reduce the statement about the right
         descent sets to the case we have just proven.
   \item This follows immediately from the analogue of Soergel's categorification
         theorem (part (4) of \Cref{thmDiagProps}). Indeed, in order to express 
         $\pcan{x} \pcan{y}$ in the $p$-canonical basis, we need to decompose
         $\pre{k}{B}_x \pre{k}{B}_y$ into indecomposable
         objects in $\pre{k}{\HC}$ and thus the Laurent polynomial 
         $\p{\mu}^{z}_{x, y}$ encodes the graded multiplicity of
         $\pre{k}{B}_z$ in this tensor product. Therefore $\p{\mu}^{z}_{x, y}$
         has non-negative coefficients. The self-duality of these Laurent polynomials
         follows from (1).
   \item As explained in \Cref{secIntForm} we need to calculate the graded rank
         of finitely many local intersection forms in order to calculate $\pcan{x}$.
         The rank of each of these intersection forms can only decrease for finitely
         many primes. (7) now follows.
   \end{enumerate}
\end{proof}

The \emph{multiplication formula} from \cite[(2.3.a) and (2.3.c)]{KL}) reads 
for $x \in W$ and $s \in S$ as follows:
\begin{equation}
   \label{eqnMultForm}
   \kl{s} \kl{x} =
   \begin{cases}
      (v + v^{-1})\kl{x} & \text{if } sx < x \text{,}\\
      \kl{sx} + \sum_{\substack{y < x \text { s.t.}\\ sy < y}}{\mu(y, x)\kl{y}} 
         & \text{otherwise.}
   \end{cases}
\end{equation}
where $\mu(y, x)$ is the coefficient of $v$ in the Kazhdan-Lusztig polynomial
$\pre{0}{h}_{y, x}$. One remnant of this multiplication formula for the 
$p$-canonical basis is the following result:
\begin{lem}
   \label{lemMultFormPCan}
   For $x \in W$ and $s \in \desc{L}(x)$ we have:
   \[ \pcan{s} \pcan{x} = (v + v^{-1}) \pcan{x} \text{.} \]
\end{lem}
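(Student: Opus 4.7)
The plan is to prove the categorical lifting
\[
\pre{k}{B}_s \otimes \pre{k}{B}_x \;\cong\; \pre{k}{B}_x(1) \oplus \pre{k}{B}_x(-1) \quad \text{in } \pre{k}{\HC},
\]
from which the lemma will follow by applying $\ch$ together with $\ch([\pre{k}{B}_x(n)]) = v^n \pcan{x}$. The idea is to establish this decomposition first in characteristic $0$ and then transport it along the $p$-modular system $(\rO, \K, k)$.

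First I would handle the characteristic $0$ case. In $\pre{\K}{\HC}$, Soergel's conjecture (\Cref{thmSoergelConj}) identifies $\ch([\pre{\K}{B}_w])$ with $\kl{w}$ for all $w \in W$, so the Kazhdan-Lusztig multiplication formula \eqref{eqnMultForm} (applied with $sx < x$) gives
\[
\ch([\pre{\K}{B}_s \otimes \pre{\K}{B}_x]) = (v + v^{-1})\,\kl{x} = \ch([\pre{\K}{B}_x(1) \oplus \pre{\K}{B}_x(-1)]).
\]
Since $\ch$ is a $\Z[v, v^{-1}]$-algebra isomorphism (\Cref{thmDiagProps}(4)) and $\pre{\K}{\HC}$ is Krull-Schmidt with indecomposables (up to shift) classified by $W$, agreement of characters forces $\pre{\K}{B}_s \otimes \pre{\K}{B}_x \cong \pre{\K}{B}_x(1) \oplus \pre{\K}{B}_x(-1)$.

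Next I would lift this decomposition to $\HC$. Set $B := B_s \otimes_\rO B_x$. The two summands above correspond to orthogonal primitive idempotents $e_1, e_2 \in \End_{\pre{\K}{\HC}}(B \otimes_\rO \K) \cong \End_\HC(B) \otimes_\rO \K$, where the latter identification uses that $\Hom$-spaces in $\HC$ are free of finite graded rank over $\rO$ (via the double leaves basis, \Cref{thmdL}). By idempotent lifting over the complete local ring $\rO$ (cf.\ \cite[Theorem 21.31]{Lam}, exactly as in the proof of \Cref{lemModRed}), the $e_i$ lift to orthogonal idempotents in $\End_\HC(B)$, producing $B \cong C_1 \oplus C_2$ in $\HC$ with $C_i \otimes_\rO \K \cong \pre{\K}{B}_x(\epsilon_i)$ for $\epsilon_1 = 1$, $\epsilon_2 = -1$. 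Each $C_i$ is indecomposable in $\HC$, since any nontrivial decomposition would persist after $-\otimes_\rO \K$ and contradict the indecomposability of $\pre{\K}{B}_x(\epsilon_i)$. By \Cref{thmDiagProps}(3) we then have $C_i \cong B_{w_i}(a_i)$; reducing back to $\K$ and matching indecomposables forces $w_i = x$ and $a_i = \epsilon_i$. Thus $B_s \otimes_\rO B_x \cong B_x(1) \oplus B_x(-1)$ in $\HC$; applying $-\otimes_\rO k$ and invoking \Cref{lemModRed} gives the desired isomorphism in $\pre{k}{\HC}$, and $\ch$ then yields the lemma.

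The main technical obstacle will be the base-change compatibility $\End_\HC(B) \otimes_\rO \K \cong \End_{\pre{\K}{\HC}}(B \otimes_\rO \K)$, which is needed to both produce and lift the idempotents. This rests on the freeness of $\Hom$-spaces in $\HC$ as $\rO$-modules, a consequence of the double leaves basis; once this is in hand, everything else reduces to standard idempotent lifting and the classification of indecomposables in $\HC$.
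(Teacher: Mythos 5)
Your reduction of the lemma to the categorical statement $\pre{k}{B}_s \otimes \pre{k}{B}_x \cong \pre{k}{B}_x(1) \oplus \pre{k}{B}_x(-1)$ is legitimate (in a Krull--Schmidt category with $\ch$ an isomorphism on the split Grothendieck group, equality of characters does determine objects up to isomorphism), but the proposed proof of that statement has a fatal gap: you try to transport a decomposition from characteristic $0$ down to $\rO$ and then to $k$, and this direction of transfer is exactly the one that fails. Idempotent lifting over the complete local ring $\rO$ (\cite[Theorem 21.31]{Lam}, as used in \Cref{lemModRed}) applies to the surjection $\End_{\HC}(B) \twoheadrightarrow \End_{\HC}(B)\otimes_{\rO} k$, whose kernel lies in the radical; it does not apply to the localization map $\End_{\HC}(B) \hookrightarrow \End_{\HC}(B)\otimes_{\rO}\K$, and an idempotent of the $\K$-algebra need not come from, nor be conjugate to, one defined over $\rO$. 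If decompositions over $\K$ descended to $\rO$ (hence to $k$), then $B_w\otimes_{\rO}\K$ and $\pre{k}{B}_w$ would always have the same decomposition pattern and the $p$-canonical basis would coincide with the Kazhdan--Lusztig basis, contradicting the examples in \Cref{secEx}. Moreover your intermediate claim $B_s\otimes_{\rO}B_x\otimes_{\rO}\K \cong \pre{\K}{B}_x(1)\oplus\pre{\K}{B}_x(-1)$ is already false in general, since $B_x\otimes_{\rO}\K$ is usually decomposable: in type $B_2$ with $p=2$ and $x=sts$ one has $B_{sts}\otimes_{\rO}\K\cong \pre{\K}{B}_{sts}\oplus\pre{\K}{B}_s$, so $B_s\otimes_{\rO}B_{sts}\otimes_{\rO}\K$ contains shifted copies of $\pre{\K}{B}_s$ as well.

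The correct argument cannot be a pure characteristic-zero computation pushed down; it has to control the characteristic-$p$ object directly. The paper does this inside the Hecke algebra: since $\pre{k}{B}_x$ is a summand of $\pre{k}{B}_s\,\pre{k}{B}_{sx}$, comparing $\ch(\pre{k}{B}_s\,\pre{k}{B}_{sx}) = \kl{s}\,\ch(\pre{k}{B}_{sx})$ with the triangular positive expansion of \Cref{propPCanProps}(3) forces $\p{m}_{sx,x}=0$; then expanding $\pcan{s}\pcan{x}=\kl{s}\bigl(\kl{x}+\sum_{y<x,\,y\neq sx}\p{m}_{y,x}\kl{y}\bigr)$ via the multiplication formula \eqref{eqnMultForm} shows the coefficient of $\pcan{x}$ is exactly $v+v^{-1}$, and positivity together with evaluation at $v=1$ (acting on the trivial module) rules out any further terms. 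If you want a categorical route, the usable input is that $\pre{k}{B}_s\otimes\pre{k}{B}_s\cong\pre{k}{B}_s(1)\oplus\pre{k}{B}_s(-1)$ and that $\pre{k}{B}_x$ is a summand of $\pre{k}{B}_s\otimes\pre{k}{B}_{sx}$, all in characteristic $p$ — not Soergel's conjecture over $\K$.
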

\begin{proof}
%

   Since $sx < x$, we can write using \Cref{propPCanProps} (2) for 
   $\ch(\pre{k}{B}_{sx})$:
   \[ \ch(\pre{k}{B}_s \pre{k}{B}_{sx}) = \kl{s} \ch(\pre{k}{B}_{sx}) 
      = H_{x} + vH_{sx} + \sum_{\substack{y < x \\ y \ne sx}} h_y H_y
      = \kl{x} + \sum_{\substack{y < x \\ y \ne sx}} m_y \kl{y}.
   \]
   As $\pre{k}{B}_x$ is a summand of $\pre{k}{B}_s \pre{k}{B}_{sx}$ we can 
   apply \Cref{propPCanProps} (3) to deduce
   \[ \ch(\pre{k}{B}_x) = \kl{x} + \sum_{\substack{y < x \\ y \ne sx}} 
      \pre{p}{m}_{y,x} \kl{y} \text{.} \]
   This implies $\pre{p}{m}_{sx, x} = 0$. Next, we calculate as follows:
   \begin{alignat*}{1}
      \pcan{s} \pcan{x} &= \kl{s} \pcan{x} \\
                        &= \kl{s} (\kl{x} + \sum_{\substack{y < x \\ y \ne sx}} 
                           \p{m}_{y, x} \kl{y}) \\
                        &\in (v+ v^{-1}) \pcan{x} + \sum_{y < x} 
                             \Z_{\geqslant 0}[v, v^{-1}] \pcan{y}
   \end{alignat*}
   where in the last equality we used \Cref{eqnMultForm} and the observation
   $\p{m}_{sx, x} = 0$ to determine the coefficient in front of $\pcan{x}$.
   After evaluating at $v = 1$ and acting on the trivial module we see
   that no other terms besides $(v+ v^{-1}) \pcan{x}$ can occur on the right hand side.
\end{proof}

\subsection{The Geometric Satake and the \texorpdfstring{$p$}{p}-canonical Basis}
\label{secGeomSatake}

In this section, we will explain the consequences of results about parity 
sheaves on the affine Grassmannian for the $p$-canonical basis. 

A very interesting question is under which hypothesis the indecomposable 
parity sheaves on the affine Grassmannian are perverse. This is 
equivalent to the $p$-canonical basis being a $\Z$-linear combination of 
Kazhdan-Lusztig basis elements.

To shorten our notation, write $\mathcal{K} \defeq \C((t))$ and $\mathcal{O} \defeq
\C[[t]]$. Define the affine Grassmannian $\affGr$ to be the $\Z$-functor given by 
$R \mapsto LG(R) / L^+G(R)$. Its complex points coincide with 
$G(\mathcal{K})/G(\mathcal{O})$. For $\lambda \in \cocharlat$ denote by 
$t^{\lambda} \in G(\mathcal{K})$ the following composition
\[ \Spec(\mathcal{K}) \longrightarrow \Spec(\mathcal{O}) = \mathbb{G}_m  
   \overset{\lambda}{\longrightarrow} T \hookrightarrow G \]
where the first morphism comes from the inclusion $\mathcal{O} \hookrightarrow 
\mathcal{K}$. The Cartan decomposition 
\[ \affGr(\C) = \bigcup_{\lambda \in \cocharlat_+} \underbrace{G(\mathcal{O}) t^{\lambda} 
   G(\mathcal{O}) / G(\mathcal{O})}_{\affGr^{\lambda} \defeq} \]
gives a stratification of $\affGr(\C)$ where each stratum 
$\affGr^{\lambda}$ is a vector bundle over a partial flag variety 
$G(\C) t^{\lambda} G(\mathcal{O}) / G(\mathcal{O})$. Since the 
$G(\mathcal{O})$-orbits are all simply-connected, the indecomposable
parity sheaves are all parametrized by $\lambda \in \cocharlat_+$ (see
\cite[Theorem 4.6]{JMW1}):
\begin{thm}
   Assume that the characteristic of $k$ is not a torsion prime\footnote{See 
   \cite[\S 2.6]{JMW1} for the definition of torsion primes. This 
   restriction can be removed by working in the non-equivariant setting.} for $G$.
   For each $\lambda \in \cocharlat_+$ there exists up to isomorphism a unique
   indecomposable parity complex $\mathcal{E}(\lambda)$ such that 
   $\supp(\mathcal{E}(\lambda)) = \overline{\affGr^{\lambda}}$ and 
   $\mathcal{E}(\lambda) \vert_{\affGr^{\lambda}} = \underline{k}_{\affGr^{\lambda}}
   [\dim \affGr^{\lambda}]$. Every indecomposable parity complex is isomorphic
   to $\mathcal{E}(\lambda)$ for some $\lambda \in \cocharlat_+$.
\end{thm}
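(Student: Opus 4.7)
The plan is to deduce the result from the general existence and uniqueness theorem for parity sheaves on a stratified variety (JMW1 Theorem 2.12). Applied to $\affGr$ with its $G(\mathcal{O})$-action and Cartan stratification, that theorem reduces the present statement to two verifications: (i) for each $\lambda \in \cocharlat_+$, the only irreducible $G(\mathcal{O})$-equivariant local system on the stratum $\affGr^{\lambda}$ is the constant one, so that indecomposable parity sheaves are indexed precisely by $\cocharlat_+$; and (ii) existence, for each such $\lambda$, of at least one parity complex supported on $\overline{\affGr^{\lambda}}$ whose restriction to $\affGr^{\lambda}$ is $\underline{k}_{\affGr^{\lambda}}[\dim \affGr^{\lambda}]$.

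For (i), each stratum $\affGr^{\lambda}$ is a $G(\mathcal{O})$-equivariant vector bundle over the partial flag variety $G/P_{\lambda}$, where $P_{\lambda} \subseteq G$ is the stabilizer of $t^{\lambda} G(\mathcal{O})$. Partial flag varieties admit Bruhat cell decompositions, hence are simply connected with cohomology free and concentrated in even degrees; the same follows for $\affGr^{\lambda}$. Consequently only the constant local system occurs, and candidate parity sheaves are indexed exactly by $\lambda \in \cocharlat_+$.

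For (ii) I would exhibit a concrete parity complex by convolution. Writing $\lambda = \mu_1 + \cdots + \mu_k$ as a sum of dominant coweights (e.g. fundamental or minuscule whenever available), form the twisted convolution variety $\widetilde{\affGr}^{\underline{\mu}}$ built from the closures $\overline{\affGr^{\mu_i}}$, together with its proper convolution morphism $m \colon \widetilde{\affGr}^{\underline{\mu}} \to \overline{\affGr^{\lambda}}$. The source is an iterated fibration with smooth base and smooth fibers, so the shifted constant sheaf on it is parity. One then verifies that $m$ is an even stratified map; by stability of parity complexes under pushforward along such maps, $m_{\ast} \underline{k}[\dim]$ is parity. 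Its indecomposable summand whose support is $\overline{\affGr^{\lambda}}$ and whose restriction to $\affGr^{\lambda}$ is the constant sheaf (up to shift) then defines $\mathcal{E}(\lambda)$.

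The main obstacle is the evenness of $m$, which amounts to a parity vanishing statement for the cohomology of the fibers of the iterated projection $\widetilde{\affGr}^{\underline{\mu}} \to \overline{\affGr^{\mu_1 + \cdots + \mu_{k-1}}}$, and thereby for the cohomology of the partial flag varieties $G/P_{\mu_i}$ with $k$-coefficients. By the classical results of Borel and Demazure, these cohomology groups are free and concentrated in even degrees precisely when $\mathrm{char}(k)$ is not a torsion prime for $G$, which is exactly our hypothesis. Once this is in hand, uniqueness of $\mathcal{E}(\lambda)$ is formal from parity vanishing, and the classification statement (that every indecomposable parity complex arises this way) follows by applying the construction to the open stratum of the support of an arbitrary indecomposable parity complex.
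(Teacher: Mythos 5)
The paper itself does not prove this statement; it quotes it from \cite[Theorem 4.6]{JMW1}, so your sketch has to be measured against the argument given there, whose overall skeleton (general existence/uniqueness criterion for parity sheaves, simple connectivity of the strata, existence via pushforward along proper even maps) you do follow. However, there is a genuine error in where you locate the torsion-prime hypothesis. The ordinary cohomology of a partial flag variety $G/P$ is free and concentrated in even degrees for \emph{any} coefficients, by the Bruhat cell decomposition; it is not true that this holds ``precisely when $\mathrm{char}(k)$ is not a torsion prime'', and consequently the fibres of your convolution morphism never see torsion primes. The hypothesis is an equivariant phenomenon: for the $G(\mathcal{O})$-equivariant theory one needs the equivariant cohomology of each stratum, $H^{\bullet}_{G(\mathcal{O})}(\affGr^{\lambda};k)\cong H^{\bullet}(BL_{\lambda};k)$ with $L_{\lambda}$ the Levi quotient of the stabilizer (and likewise $H^{\bullet}(BG;k)$), to be even and free, and this fails exactly at torsion primes. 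This is consistent with the footnote in the statement, which says the restriction disappears in the non-equivariant setting --- something your argument would not predict, since your constraint sits in a non-equivariant fibre computation.

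There is also a gap in the existence step. The twisted convolution space built from the closures $\overline{\affGr^{\mu_i}}$ is smooth only when each $\overline{\affGr^{\mu_i}}$ is smooth, i.e.\ when each $\mu_i$ is minuscule (or zero); not every dominant coweight is a sum of minuscule coweights (there are none in types $E_8$, $F_4$, $G_2$), so the shifted constant sheaf on your source need not be parity and the construction does not produce $\mathcal{E}(\lambda)$ for all $\lambda \in \cocharlat_+$. The standard repair --- and essentially the route of \cite{JMW1} --- is to build parity sheaves from Bott--Samelson-type resolutions attached to reduced expressions (iterated $\mathbb{P}^1$-bundles, where evenness of the map is proved via affine pavings of all spaces involved, in particular using the Iwahori orbits), and then to pass to the $G(\mathcal{O})$-orbit stratification of $\affGr$, e.g.\ via the smooth proper projection $\affFl \rightarrow \affGr$ with fibre the finite flag variety; alternatively one can use quasi-minuscule convolutions, but then the source is no longer smooth and an extra argument is required. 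So both the evenness of $m$ and the indexing of where the hypothesis on $\mathrm{char}(k)$ is used need to be redone before your sketch becomes a proof.
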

Denote by $P_{G(\mathcal{O})}(\affGr(\C), k)$ the $G(\mathcal{O})$-equivariant
perverse sheaves on $\affGr(\C)$ with coefficients in $k$. It comes equipped with
a monoidal structure induced by the convolution product $\ast$.

Recall that we denote by $\Psi = (\charlat, \rts, \cocharlat, \corts)$ the 
root datum of $G$. Define $G^{\vee}$ to be the split connected reductive group scheme 
over $k$ with root datum (Langlands) dual to that of $G$. To each dominant coweight 
$\lambda \in \cocharlat_+$ we can associate an induced representation 
$\nabla(\lambda) \defeq \ind_{B^{\vee}}^{G^{\vee}} k_{\lambda}$, its dual $\Delta(\lambda)$ called 
the \emph{Weyl module} and a simple module $L(\lambda)$ sitting in the 
following sequence $\Delta(\lambda) \twoheadrightarrow L(\lambda) 
\hookrightarrow \nabla(\lambda)$ given by projection to the head and inclusion 
of the socle. Moreover, the set $\{ L(\lambda) \; \vert \; \lambda \in \cocharlat_+ \}$
gives a complete set of representatives for the isomorphism classes of simple, 
rational $G^{\vee}$-modules. The category $\Rep(G^{\vee})$ of rational 
representations of $G^{\vee}$ forms a highest-weight category with Weyl 
modules as standard objects and induced modules as costandard objects. A 
rational representation of $G^{\vee}$ is called \emph{tilting} if it admits two 
filtrations, one with successive quotients isomorphic to Weyl modules and the 
other one with successive quotients isomorphic to induced modules. In 
\cite[Theorem 1.1]{DoTiltingMods} Donkin classifies indecomposable tilting 
modules for algebraic groups giving in our setting:

\begin{thm}
   For each $\lambda \in \cocharlat_+$ there exists up to isomorphism a unique 
   indecomposable tilting module $T(\lambda)$ of highest weight $\lambda$. Moreover,
   $\lambda$ occurs with multiplicity one as a weight of $T(\lambda)$. Every indecomposable
   tilting module is isomorphic to $T(\lambda)$ for some $\lambda \in \cocharlat_+$.
\end{thm}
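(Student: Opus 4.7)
The plan is to follow Ringel's general existence/uniqueness argument for tilting modules in a highest weight category, using the two Ext-vanishing facts that make $\Rep(G^{\vee})$ quasi-hereditary: namely $\Ext^{>0}(\Delta(\mu),\nabla(\nu))=0$ for all $\mu,\nu\in\cocharlat_+$, and $\Hom(\Delta(\mu),\nabla(\nu))=k$ if $\mu=\nu$ and $0$ otherwise. These are the categorical inputs that turn what looks like a miracle (existence of a module with both a Weyl and a dual Weyl filtration) into a routine inductive construction.

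First I would prove existence of $T(\lambda)$ by induction on $\lambda$ with respect to the dominance order. The base case is a minimal $\lambda\in\cocharlat_+$, for which $\Delta(\lambda)=\nabla(\lambda)=L(\lambda)$ and we take $T(\lambda)$ to equal this common module; here $\lambda$ trivially occurs with multiplicity one. For the inductive step, start with $M_0\defeq\Delta(\lambda)$, which has a Weyl filtration by definition but may fail to have a good filtration. If $M_0$ is already tilting, stop. Otherwise, pick $\mu<\lambda$ with $\Ext^1(\Delta(\mu),M_0)\neq 0$ and form a universal extension
\[
0\longrightarrow M_0\longrightarrow M_1\longrightarrow \Delta(\mu)^{\oplus n_\mu}\longrightarrow 0
\]
for $n_\mu=\dim\Ext^1(\Delta(\mu),M_0)$. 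The long exact sequence together with the Ext-vanishing above forces $\Ext^1(\Delta(\mu),M_1)=0$, and $M_1$ still has a Weyl filtration (top piece $\Delta(\lambda)$, so multiplicity of $\lambda$ unchanged). Iterating over the finitely many $\mu<\lambda$ in $\cocharlat_+$ that can appear yields a module $M$ with a Weyl filtration and $\Ext^1(\Delta(\mu),M)=0$ for all $\mu$; the standard criterion then says $M$ has a good filtration as well, so $M$ is tilting. Picking the indecomposable summand containing the unique copy of $\Delta(\lambda)$ gives $T(\lambda)$, with $\lambda$ as highest weight of multiplicity one.

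For uniqueness, suppose $T$ and $T'$ are two indecomposable tiltings in which $\lambda$ is a highest weight of multiplicity one. Since $T'$ has a Weyl filtration starting with $\Delta(\lambda)$ and $T$ has a good filtration, Ext-vanishing gives a lift of the inclusion $\Delta(\lambda)\hookrightarrow T$ to a map $\varphi\colon T'\to T$; symmetrically one produces $\psi\colon T\to T'$. The composition $\psi\varphi$ is the identity on the weight space $T'_\lambda$ (which is one-dimensional), hence is not nilpotent, hence is an automorphism by Fitting's lemma on the local endomorphism ring of the indecomposable $T'$. Thus $\varphi$ is split injective, and by comparing formal characters (which agree because both modules are determined in the Grothendieck group by their Weyl multiplicities, and these are forced by the recursive Ext computation above), $\varphi$ is an isomorphism. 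Finally, any indecomposable tilting $T$ has some highest weight $\lambda$ occurring with multiplicity one (pick a maximal weight of $T$; its multiplicity equals the number of $\Delta(\lambda)$ in any Weyl filtration, and if this were $\geq 2$ one could split off a summand using $\End(T)$ local together with the Ext-vanishing), so $T\cong T(\lambda)$ by the uniqueness just proved.

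The main obstacle is the existence step: one has to trust that the inductive extension procedure terminates and that the resulting module still has a good filtration. The termination uses that only finitely many weights $\mu<\lambda$ support nonzero $\Ext^1(\Delta(\mu),-)$ when one works inside a fixed poset ideal, and the ``good filtration'' half of the tilting property for the final $M$ relies on the standard Ext-criterion ($N$ has a good filtration iff $\Ext^1(\Delta(\mu),N)=0$ for all $\mu$), which is itself a nontrivial fact about $\Rep(G^{\vee})$ requiring the quasi-hereditary structure. Everything else is bookkeeping with the Krull–Schmidt property and Fitting's lemma.
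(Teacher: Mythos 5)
The paper does not actually prove this statement: it is quoted directly from Donkin's classification of indecomposable tilting modules (\cite[Theorem 1.1]{DoTiltingMods}), so there is no internal proof to compare against, and your task was effectively to reconstruct the literature proof. Your argument is indeed the standard Ringel--Donkin construction: build $T(\lambda)$ from $\Delta(\lambda)$ by iterated universal extensions by Weyl modules, invoke the cohomological criterion that $N$ has a good filtration if and only if $\Ext^1(\Delta(\mu),N)=0$ for all dominant $\mu$, and obtain uniqueness by lifting maps along filtrations and applying Fitting's lemma to the local endomorphism ring. The skeleton is correct, but a few points need tightening. First, the two Ext-facts you list at the outset do not by themselves give $\Ext^1(\Delta(\mu),M_1)=0$ after the universal extension: you also need the directedness $\Ext^1(\Delta(\mu),\Delta(\nu))=0$ unless $\mu<\nu$, and the same fact is what guarantees that processing the finitely many dominant $\mu\leq\lambda$ \emph{in decreasing order} never reintroduces extension groups killed earlier; your ``iterate over the $\mu$ that appear'' phrasing hides both uses. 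Second, in the uniqueness step the appeal to formal characters is unnecessary (and, as justified, close to circular): once $\psi\varphi$ is an automorphism, $\varphi$ is split injective, and indecomposability of $T$ alone forces $\varphi$ to be an isomorphism. Third, the closing claim that a maximal weight of an arbitrary indecomposable tilting module occurs with multiplicity one is most cleanly handled by exhibiting the already-constructed $T(\lambda)$ as a direct summand via the same pair of lifted maps, rather than the compressed ``split off a summand'' remark. None of these is a fatal gap; with those repairs your proof is essentially the one on which the cited theorem of Donkin rests.
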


The Geometric Satake equivalence (see \cite{MV}) gives a monoidal equivalence
\[ (P_{G(\mathcal{O})}(\affGr(\C), k), \ast) \overset{\cong}{\longrightarrow}
   (\Rep(G^{\vee}), \otimes) \text{.} \]
In \cite{JMW3}  Juteau, Mautner and the second author show that if the characteristic
$p$ of $k$ is larger than an explicit bound depending on the root system $\Phi$ of $G$,
then the indecomposable $k$-parity sheaves on the affine Grassmannian are 
perverse. More precisely, they show that for $\lambda 
\in \cocharlat_+$ the indecomposable tilting module $T(\lambda)$ is mapped to 
$\mathcal{E}(\lambda)$ under the geometric Satake equivalence for $p > b(\Phi)$ 
(see below).

From now on let us assume that $G$ is adjoint, so $G^{\vee}$ is simply connected.
In particular, the cocharacter lattice $\cocharlat$ coincides with the coweight
lattice. Denote by $W_a \defeq W \ltimes \Z \corts$ the \emph{affine Weyl group}
and by $W_{ext} \defeq W \ltimes \cocharlat$ the \emph{extended affine Weyl group}.
View  $W_a$ as a Coxeter group $(W_a, S_a)$. We can write
\[ W_{ext} = \Omega \ltimes W_a \]
where $\Omega$ is a finite subgroup of ``length zero elements'' which acts by
automorphisms of the Coxeter system $(W_a, S_a)$ (see \cite[\S 2]{LuQAna}). The
action of $W_{ext}$ on $\cocharlat$ gives bijections:
\begin{alignat*}{3}
   &\cocharlat \  &&\overset{\sim}{\longleftrightarrow} \ &&W_{ext}/W \text{,}\\
    w&(0) &&\longleftrightarrow && w \text{,}\\
   &\cocharlat_+ \  &&\overset{\sim}{\longleftrightarrow} \  W \backslash &&W_{ext} /W
      = \bigcup_{\sigma \in \Omega} \sigma(W) \backslash W_a / W \text{.}
\end{alignat*}
Given $\lambda \in \cocharlat_+$ we denote by $w_{\lambda} \in W_a$ the maximal 
element in the double coset in $\bigcup_{\sigma \in \Omega} \sigma(W) 
\backslash W_a / W$ that corresponds to $\lambda$ under the bijection above.
In this section we will work with the Hecke algebra $\heck[a]$ associated to 
$(W_a, S_a)$.

The next results follows because the projection from the affine flag variety
to the affine Grassmannian is a stratified fiber bundle with fibers isomorphic 
to the finite flag variety $\Fl$: \\

\begin{thm}
   Suppose that all parity sheaves on the affine Grassmannian are perverse.
   Then $\pre{p}{m}_{w_{\lambda}, w_{\nu}} \in \Z$ for all $\lambda, \nu \in 
   \cocharlat_+$.
\end{thm}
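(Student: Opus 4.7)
The plan is to pull back the hypothesis along $\pi : \affFl \to \affGr$, upgrade the perverse conclusion to the generic fibre, and then extract integrality via the decomposition theorem over $\K$. By the identification $\ch(\mathcal{E}_k(w)) = \pcan{w}$ of \cite{RW}, the $p$-canonical basis is realized by parity sheaves on $\affFl$. The projection $\pi$ is a Zariski-locally trivial fibration with smooth fiber $\Fl = G/B$ of relative dimension $d \defeq \dim \Fl$, so smooth pullback preserves parity, and the preimage $\pi^{-1}(\overline{\affGr^{\nu}})$ is the closure of the $I$-orbit indexed by $w_{\nu}$, the maximum element of the $W$-double coset attached to $\nu$. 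Comparing supports and restrictions to open strata therefore identifies
\[ \pi^{\ast} \mathcal{E}_k(\nu)[d] \cong \mathcal{E}_k(w_{\nu}). \]
Because $\pi$ is smooth of relative dimension $d$, the functor $\pi^{\ast}[d]$ is perverse t-exact, so the hypothesis that $\mathcal{E}_k(\nu)$ is perverse on $\affGr$ forces $\mathcal{E}_k(w_{\nu})$ to be a perverse sheaf on $\affFl$.

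Next I would lift to an integral parity complex $\mathcal{E}_{\rO}(w_{\nu})$ over $\rO$. The crucial feature is that stalks and costalks of integral parity sheaves are free $\rO$-modules, so for each $x \in \affFl$ the cohomology of the stalk commutes with base change to both $k$ and $\K$. The perversity conditions are vanishing statements on certain $\mathcal{H}^i$ of stalks and costalks; combined with freeness, vanishing after $-\otimes_{\rO} k$ forces the integral version to vanish, and hence so does its tensor product with $\K$. This transfers perversity from $\mathcal{E}_k(w_{\nu})$ to $\mathcal{E}_{\K}(w_{\nu})$.

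Over $\K$ the decomposition theorem applies and parity sheaves split as direct sums of shifted IC sheaves:
\[ \mathcal{E}_{\K}(w_{\nu}) \cong \bigoplus_{y \leqslant w_{\nu}} \mathcal{IC}_{\K}(y)^{\oplus f_y(v)}, \]
where $f_y(v) \in \Z_{\geqslant 0}[v, v^{-1}]$ records multiplicities with grading shifts. Since $\mathcal{IC}_{\K}(y)[n]$ is perverse only for $n = 0$, perversity of the left-hand side forces each $f_y(v)$ to be a non-negative integer. Freeness of stalks also yields $\ch(\mathcal{E}_{\K}(w_{\nu})) = \ch(\mathcal{E}_k(w_{\nu})) = \pcan{w_{\nu}}$, and in characteristic $0$ one has $\ch(\mathcal{IC}_{\K}(y)) = \kl{y}$. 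Matching coefficients in the two expansions gives $\p{m}_{y, w_{\nu}} = f_y(v) \in \Z$ for every $y \leqslant w_{\nu}$, in particular for $y = w_{\lambda}$.

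The main technical obstacle is the perversity transfer between $k$ and $\K$, which really boils down to the statement that stalks and costalks of an integral parity sheaf are free $\rO$-modules; granted this, the argument is a clean application of the decomposition theorem in the characteristic-zero fibre combined with the fact that smooth pullback is perverse t-exact.
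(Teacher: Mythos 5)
Your argument is correct and is exactly the expansion of the paper's one-line proof, which likewise transfers perversity along the projection $\pi : \affFl \rightarrow \affGr$ (a stratified fibre bundle with fibre $\Fl$) and then reads off integrality of the multiplicities from the characteristic-zero decomposition. The only imprecision is the asserted isomorphism $\pi^{\ast}\mathcal{E}_k(\nu)[d] \cong \mathcal{E}_k(w_{\nu})$: comparing supports and restrictions to the open stratum only shows that $\mathcal{E}_k(w_{\nu})$ occurs as a direct summand (with multiplicity one) of the parity complex $\pi^{\ast}\mathcal{E}_k(\nu)[d]$, but this weaker statement is all you need, since perversity passes to direct summands and the rest of your argument goes through unchanged.
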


The explicit bound has later been improved by Mautner and Riche in \cite{MR}.
They show that the parity sheaves on the affine Grassmannian are perverse 
whenever the characteristic $p$ of $k$ 
is good for $G$, thus giving the following bounds $p > b(\Phi)$ if the root system
$\Phi$ of $G$ is irreducible:\\
\begin{center}
   \begin{tabular}{c || c | c | c | c }
      Type of $\Phi$ & $A_n$ & $B_n$, $C_n$, $D_n$ & $E_6$, $E_7$, $F_4$, $G_2$ & $E_8$ \\
      \hline
      $b(\Phi)$ & $1$ & $2$ & $3$ & $5$ 
   \end{tabular}
\end{center}

The following result gives an interpretation of the multiplicities in the
$p$-canonical basis in terms of $\Rep(G^{\vee})$ (see 
\cite[Corollary 4.1]{JMW3} for the first part):

\begin{lem}
   \label{lemPCanTilt}
   Suppose that the characteristic $p$ of $k$ satisfies $p > b(\Phi)$. Then we 
   have for $\lambda, \mu \in \cocharlat_+$:
   \begin{enumerate}
      \item $\pre{p}{h}_{w_{\mu}, w_{\lambda}}(1) = \dim T(\lambda)_{\mu}$,
      \item $\pre{p}{m}_{w_{\mu}, w_{\lambda}} = 
            [T(\lambda) : \Delta(\mu)] = [T(\lambda) : \nabla(\mu)]$ where 
            $[T(\lambda) : \Delta(\mu)]$ denotes the multiplicity of $\Delta(\mu)$
            in a $\Delta$-flag on $T(\lambda)$.
   \end{enumerate}
\end{lem}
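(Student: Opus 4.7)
The plan is to use the projection $\pi \colon \affFl \to \affGr$ --- a Zariski-locally trivial fibre bundle with fibre $\Fl = G/B$ --- as a bridge between the parity-sheaf realization of the $p$-canonical basis of $\heck[a]$ on $\affFl$ and the Geometric Satake equivalence on $\affGr$. Under the hypothesis $p > b(\Phi)$ the Mautner-Riche theorem stated just above identifies $\mathcal{E}(\lambda)$ with $T(\lambda)$, and this is what will let us translate stalks of Satake parity sheaves into weight spaces of tilting modules.

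For part (1), my first step is to observe that since $w_\lambda$ is the maximal element in its double coset, $\affFl_{w_\lambda}$ is the open dense $I$-orbit in $\pi^{-1}(\affGr^\lambda)$. Smoothness of $\pi$ together with the support, top-stratum and parity characterization of $\mathcal{E}(w_\lambda)$ then yields
\[
   \mathcal{E}(w_\lambda) \;\cong\; \pi^{*} \mathcal{E}(\lambda)\bigl[\dim \Fl\bigr].
\]
For any $\mu \in \cocharlat_+$ and $x \in \affFl_{w_\mu}$ one has $\pi(x) \in \affGr^\mu$ (again by maximality), so the graded Poincaré polynomial of the stalk $\mathcal{E}(w_\lambda)_x$ agrees, up to an invertible factor in $\Z[v,v^{-1}]$, with that of $\mathcal{E}(\lambda)_{\pi(x)}$. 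Setting $v=1$ collapses this polynomial to the total dimension of the stalk; the standard compatibility of stalks of Satake parity sheaves with weight spaces of the corresponding tilting module (the content of \cite{JMW3,MR}) equates this total dimension with $\dim T(\lambda)_\mu$.

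I would then deduce part (2) from (1) by a little linear algebra. A $\Delta$-filtration of $T(\lambda)$ yields
\[
   \dim T(\lambda)_\mu \;=\; \sum_{\nu \in \cocharlat_+} [T(\lambda) : \Delta(\nu)]\cdot \dim \Delta(\nu)_\mu,
\]
and applying part (1) in characteristic $0$ --- where $\mathcal{E}(\nu)$ is the IC sheaf and Weyl modules are simple --- gives $\dim \Delta(\nu)_\mu = \pre{0}{h}_{w_\mu, w_\nu}(1)$. Expanding $\pcan{w_\lambda}$ in the Kazhdan-Lusztig basis produces
\[
   \pre{p}{h}_{w_\mu, w_\lambda}(1) \;=\; \sum_{y \leqslant w_\lambda} \pre{p}{m}_{y, w_\lambda}(1)\cdot \pre{0}{h}_{w_\mu, y}(1).
\]
\Cref{propPCanProps}(5), combined with the maximality of both descent sets of $w_\lambda$, restricts the non-zero summands on the right to those with $y = w_\nu$ for some $\nu \in \cocharlat_+$; the preceding theorem then forces these $\pre{p}{m}_{w_\nu, w_\lambda}$ to lie in $\Z$. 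The matrix $\bigl(\pre{0}{h}_{w_\mu, w_\nu}(1)\bigr)_{\mu,\nu}$ is unitriangular in the dominance order, so inversion extracts $\pre{p}{m}_{w_\nu, w_\lambda} = [T(\lambda) : \Delta(\nu)]$. The second equality $[T(\lambda) : \Delta(\mu)] = [T(\lambda) : \nabla(\mu)]$ follows from $\ch(\Delta(\mu)) = \ch(\nabla(\mu))$ together with the uniqueness of each filtration type.

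The main obstacle is the stalk/weight-space identification invoked at the end of part (1). The Geometric Satake fibre functor is hypercohomology, which decomposes naturally via semi-infinite orbits rather than via stalks; passing between the two descriptions depends crucially on the parity condition and a hyperbolic-restriction argument. The bound $b(\Phi)$ in the hypothesis is precisely what makes parity sheaves behave well enough for this identification to go through, so this step is a direct appeal to \cite{JMW3,MR} rather than something to prove from scratch.
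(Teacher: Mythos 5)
Your proposal is correct and follows essentially the route the paper intends: the paper gives no written argument beyond citing \cite[Corollary 4.1]{JMW3} for part (1) and the remark that $\pi\colon \affFl \to \affGr$ is a stratified fibre bundle with fibre $\Fl$, and your proof simply unpacks this --- identifying $\mathcal{E}(w_\lambda)$ with $\pi^{*}\mathcal{E}(\lambda)[\dim\Fl]$, appealing to \cite{JMW3,MR} for the stalk/weight-space comparison, and then deducing (2) by the character linear algebra made legitimate by the preceding integrality theorem and \Cref{propPCanProps}(5). The one step you assert rather than justify is the indecomposability of $\pi^{*}\mathcal{E}(\lambda)[\dim\Fl]$ (without which the ``support, top-stratum and parity'' characterization does not apply); it follows from $\pi_{*}\pi^{*}\mathcal{E}(\lambda)\cong \mathcal{E}(\lambda)\otimes H^{\bullet}(\Fl)$ and the locality of $\End(\mathcal{E}(\lambda))$ (using perversity, hence vanishing of negative self-Exts, under $p>b(\Phi)$), a point the paper likewise leaves implicit.
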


The first part of the last result shows that $\pre{p}{h}_{w_{\mu}, w_{\lambda}}$
gives a $q$-analogue of the weight multiplicity of $\mu$ in $T(\lambda)$. In
characteristic $0$ (where $T(\lambda) = L(\lambda)$) this result can be found in
\cite[Theorem 6.1]{LuQAna}. In particular, knowledge of the characters of
indecomposable tilting modules for $G^{\vee}$ is equivalent to the knowledge
of the $p$-canonical basis elements $\{ \pcan{w_\lambda} \; \vert \; \lambda \in
\cocharlat_+\}$.

\section{Examples}
\label{secEx}

According to the classification of root systems and connected semi-simple algebraic groups,
a Dynkin diagram fixes a semi-simple, adjoint algebraic group $G$
together with a maximal torus $T \subseteq G$ such that the root system determined
by $(G, T)$ corresponds to the given Dynkin diagram. In this section we will only
give the Dynkin diagram and consider the corresponding root datum of this pair
$(G, T)$ together with an arbitrary basis labelled by the nodes of the Dynkin diagram
as input.

\subsection{Type \texorpdfstring{$B_2$}{B2}}
We label the simple reflections as follows:
\[\begin{tikzpicture}[auto, baseline=(current  bounding  box.center)]
      \draw (0,\edgeShift) -- (-1,\edgeShift);
      \draw (0,-\edgeShift) -- (-1,-\edgeShift);
      \path (0,0) to node[Greater] (mid) {} (-1,0);
      \draw (mid.center) to +(30:\wingLen);
      \draw (mid.center) to +(330:\wingLen);
      \node [DynNode] (1) at (-1,0) {$s$};
      \node [DynNode] (2) at (0,0) {$t$}; 
\end{tikzpicture} \]
That means for the pairing between the simple coroots and roots:
\begin{align*}
   \langle \rt{t}, \cort{s} \rangle &= -2 \text{,}\\
   \langle \rt{s}, \cort{t} \rangle &= -1 \text{.}
\end{align*}

Because the Schubert varieties associated to $e, s, t, st, ts$ and $stst$ are 
smooth, we have $\pcan{x} = \kl{x}$ for $x \in \{e, s, t, st, ts, stst\}$
and all primes $p$. (This can also be checked directly.) The remaining two 
elements are $sts$ and $tst$. The two subsequences of $(s, t, s)$ expressing $s$ and 
corresponding light leaves are:
\begin{align*}
   (U1, U0, D0) \  &\text{of defect } 0 
      \ \rightsquigarrow \  L_1 = 
      \begin{tikzpicture}[baseline=(current  bounding  box.center)]
         \draw[sline] (-\xdist,0) -- (-\xdist,\ydist) to[out=90, in=225] (0,2*\ydist) -- (0,3*\ydist);
         \draw[sline] (\xdist,0) -- (\xdist,\ydist) to[out=90, in=315] (0,2*\ydist);
         \draw[tline] (0,0) -- (0,\ydist);
         \draw[tdot] (0,\ydist) circle (\circSize);
      \end{tikzpicture} \text{,} \\
   (U0, U0, U1) \  &\text{of defect } 2 
      \ \rightsquigarrow L_2 = 
      \begin{tikzpicture}[baseline=(current  bounding  box.center)]
         \draw[sline] (-\xdist,0) -- (-\xdist,\ydist);
         \draw[sdot] (-\xdist,\ydist) circle (\circSize);
         \draw[sline] (\xdist,0) -- (\xdist,\ydist) to[out=90, in=270] (0,2*\ydist) -- (0,3*\ydist);
         \draw[tline] (0,0) -- (0,\ydist);
         \draw[tdot] (0,\ydist) circle (\circSize);
      \end{tikzpicture} \text{.}
\end{align*}

Thus the local intersection form of $(s,t,s)$ at $s$ is given by
\[ I_{sts, s} = (L_i \circ \overline{L_j})_{i, j \in \{1, 2\}} = \left (
   \begin{matrix}
      \langle \rt{t}, \cort{s} \rangle & \rt{t} \\
      \rt{t} & \rt{s} \rt{t} \\
   \end{matrix} \right ) \]
where the top left entry comes from the following calculation:
\[ \begin{tikzpicture}[baseline=(current  bounding  box.center)]
      \draw[sline] (0, -2*\ydist) to (0, -\ydist) to[out=135, in=270] (-\xdist,0) 
                   to (-\xdist,\ydist) to[out=90, in=225] (0,2*\ydist) to (0,3*\ydist);
      \draw[sline] (0, -\ydist) to[out=45, in=270] (\xdist, 0) 
                   to (\xdist,\ydist) to[out=90, in=315] (0,2*\ydist);
      \draw[tline] (0,0) to (0,\ydist);
      \draw[tdot] (0,\ydist) circle (\circSize);
      \draw[tdot] (0,0) circle (\circSize);
   \end{tikzpicture} \quad = \quad
   \begin{tikzpicture}[baseline=(current  bounding  box.center), 
                       inner sep=0mm, outer sep=0mm]
      \draw[sline] (0, -2*\ydist) to (0, -\ydist) to[out=135, in=270] (-\xdist,0) 
                   to (-\xdist,\ydist) to[out=90, in=225] (0,2*\ydist) to (0,3*\ydist);
      \draw[sline] (0, -\ydist) to[out=45, in=270] (\xdist, 0) 
                   to (\xdist,\ydist) to[out=90, in=315] (0,2*\ydist);
      \node (v) at (0,\ydist / 2)
{$\rt{\colt}$};
   \end{tikzpicture} \quad = \quad
   \begin{tikzpicture}[baseline=(current  bounding  box.center), 
                       inner sep=0mm, outer sep=0mm]
      \draw[sline] (0, -2*\ydist) to (0, -\ydist) to +(135:\armLen);
      \draw[sdot] ($ (0, -\ydist) + (135:\armLen) $) circle (\circSize);
      \draw[sline] (0,3*\ydist) to (0,2*\ydist) to +(225:\armLen);
      \draw[sdot] ($ (0,2*\ydist) +(225:\armLen) $) circle (\circSize);
      \draw[sline] (0, -\ydist) to[out=45, in=270] (\xdist, 0) 
                   to (\xdist,\ydist) to[out=90, in=315] (0,2*\ydist);
      \node (v) at (-\xdist,\ydist / 2)
{$\partial_{\cols}\rt{\colt}$};
   \end{tikzpicture} \enspace + \enspace
   \begin{tikzpicture}[baseline=(current  bounding  box.center), 
                       inner sep=0mm, outer sep=0mm]
      \draw[sline] (0, -2*\ydist) to (0, -\ydist) to[out=135, in=270] (-\xdist,0) 
                   to (-\xdist,\ydist) to[out=90, in=225] (0,2*\ydist) to (0,3*\ydist);
      \draw[sline] (0, -\ydist) to[out=45, in=270] (\xdist, 0) 
                   to (\xdist,\ydist) to[out=90, in=315] (0,2*\ydist);
      \node (v) at (-2*\xdist,\ydist / 2)
{$\rt{\colt}$};
   \end{tikzpicture} \quad = \quad
   \begin{tikzpicture}[baseline=(current  bounding  box.center), 
                       inner sep=0mm, outer sep=0mm]
      \node (v) at (-1.5*\xdist,\ydist / 2)
{$\partial_{\cols}\rt{\colt}$};
      \draw[sline] (0, -2*\ydist) to (0,3*\ydist);
   \end{tikzpicture}
\]

This shows that if $p=2$, then $\pre{k}{B}_s \pre{k}{B}_t \pre{k}{B}_s$ does not decompose
as $\pre{k}{B}_{sts} \oplus \pre{k}{B}_s$, but remains indecomposable. 
We get
\[ \pcan{sts} = 
   \begin{cases}
      \kl{sts} + \kl{s} & \text{if } p = 2 \text{,} \\
      \kl{sts} & \text{otherwise.}
   \end{cases}
\]

Swapping the roles of $s$ and $t$, the same calculation yields
\[ \pcan{tst} = \kl{tst} \]
for all primes $p$ as $\langle \rt{s}, \cort{t} \rangle = -1$.

Observe that in this case the whole local intersection form of $(s,t,s)$ at $s$ can 
be calculated using the formula in the nil Hecke ring which we explained in
\Cref{secNilHeck}.

\subsection{Type \texorpdfstring{$G_2$}{G2}}
We label the simple reflections as follows:
\[\begin{tikzpicture}[auto, baseline=(current  bounding  box.center)]
      \draw (0,2*\edgeShift) -- (-1,2*\edgeShift);
      \draw (0,0) -- (-1,0);
      \draw (0,-2*\edgeShift) -- (-1,-2*\edgeShift);
      \path (0,0) to node[Greater] (mid) {} (-1,0);
      \draw (mid.center) to +(30:\wingLen);
      \draw (mid.center) to +(330:\wingLen);
      \node [DynNode] (1) at (-1,0) {$s$};
      \node [DynNode] (2) at (0,0) {$t$};
   \end{tikzpicture} \]
That means for the pairing between the simple coroots and roots:
\begin{align*}
   \langle \rt{t}, \cort{s} \rangle &= -3 \text{,}\\
   \langle \rt{s}, \cort{t} \rangle &= -1 \text{.}
\end{align*}

For all primes $p > 3$ the Kazhdan-Lusztig basis coincides with the $p$-canonical
basis. Since the Cartan matrix is symmetric modulo $2$, the $2$-canonical basis
is stable under swapping $s$ and $t$. Here is a summary of the results for 
$p \in \{2, 3\}$:\\
\begin{align*}
   \pcan[2]{stst} &= \kl{stst} + \kl{st}  & \pcan[3]{sts} &= \kl{sts} + \kl{s} \\
   \pcan[2]{ststs} &= \kl{ststs} + \kl{s} & \pcan[3]{ststs} &= \kl{ststs} + \kl{sts} \\[0.5cm]
   \pcan[2]{x} &= \kl{x} \quad \text{for } & \pcan[3]{x} &= \kl{x} \quad \text{for } \\
   x &\nin \{stst, tsts, ststs, tstst\} & x &\nin \{sts, ststs\}
\end{align*}

In this example, all the calculations needed to determine $\pcan{x}$ for 
$x \notin \{ ststs, tstst \}$ can be carried out using the formula in the
nil Hecke ring from \Cref{secNilHeck}. For $(s,t,s,t,s)$ (resp. $(t,s,t,s,t)$) there
is a subexpression of defect $0$ expressing $s$ (resp. $t$) that contains a $D1$:
\[ (U1, U1, U0, D1, D0) \text{.} \]

We will illustrate how useful the formula in the nil Hecke ring is by calculating
the intersection form of $(s,t,s,t)$ at $st$. For the Kazhdan-Lusztig basis we know:
\[ \kl{s} \kl{t} \kl{s} \kl{t} = \kl{stst} + 2 \kl{st} \text{.} \]
There are two subexpressions of $(s,t,s,t)$ of defect $0$ expressing $st$:
\begin{align*}
   \expr{e}^1 \defeq &(U1, U0, D0, U1) \\
   \expr{e}^2 \defeq &(U1, U1, U0, D0) 
\end{align*}
We need to calculate the coefficient of $D_{st}$ in the following
elements of the nil Hecke ring:
\begin{align*}
   d(\expr{e}^1, \expr{e}^1): \quad &D_s \rt{t} D_s D_t = \partial_s(\rt{t}) D_{st}\\
   d(\expr{e}^1, \expr{e}^2): \quad &D_s 1 1 D_t =D_{st} \\
   d(\expr{e}^2, \expr{e}^2): \quad &D_s D_t \rt{s} D_t = \partial_t(\rt{s}) D_{st} 
\end{align*}
Therefore the local intersection form of $stst$ at $st$ is given by
\[
\left ( 
   \begin{matrix} 
      -3 & 1 \\
      1 & 1
   \end{matrix} \right )
\]
which implies the result stated above.

\subsection{Type \texorpdfstring{$\tilde{A}_1$}{A1~}}
We label the simple reflections in $S_a$ as follows:
\[\begin{tikzpicture}[auto, baseline=(current  bounding  box.center)]
      \draw (0,0) to node[labeling, label=above:$\infty$] {} (-1,0);
      \node [DynNode] (1) at (-1,0) {$s$};
      \node [DynNode] (2) at (0,0) {$t$};
   \end{tikzpicture} \]
That means for the pairing between the simple coroots and roots:
\begin{align*}
   \langle \rt{t}, \cort{s} \rangle &= -2 \text{,}\\
   \langle \rt{s}, \cort{t} \rangle &= -2 \text{.}
\end{align*}

As we discussed in \Cref{secGeomSatake}, knowledge of the characters of the
tilting modules for $SL_2$ in characteristic $p$ gives us part of the $p$-canonical
basis for the Weyl group of type $\widetilde{A}_1$. Actually in this case
two miracles occur:
\begin{enumerate}
   \item Donkin's tilting tensor product theorem (\cite[Proposition 2.1]{DoTiltingMods}) together
         with the knowledge of the characters of the fundamental tilting modules
         (e.g. see \cite[Lemma 1.1]{DH}) allow us to determine the characters of
         all indecomposable tilting modules. (This is the only semi-simple group 
         for which all tilting characters are known).
   \item The set $\{ w_{\lambda} \; \vert \; \lambda \in \cocharlat_+ \}$ together
         with its image under the automorphism $s \leftrightarrow t$ yields the 
         set $W_a \setminus \{ \id \}$. In particular, knowledge of the tilting 
         characters gives the whole $p$-canonical basis for $SL_2$.
\end{enumerate}

To bypass the calculations, the following result (see \cite[Lemma 6]{EH}) is useful
because it gives a combinatorial description of the $\Delta$-multiplicities of an
indecomposable tilting module.

\begin{lem}
   Let $n \in \Z_{\geqslant 0}$. Write $n$ uniquely as $\sum_{i = 0}^l n_i p^i$ 
   with $p-1 \leqslant n_i \leqslant 2p-2$ for $i < l$ and $0 \leqslant n_l 
   \leqslant p-1$.
   
   Then $\Delta(m)$ occurs in a $\Delta$-flag of $T(n)$ if and only if 
   $m = \sum_{i = 0}^l m_i p^i$ where $m_j = n_j$ or $m_j = 2p - 2 - n_j$ for
   $j < l$ and $m_l = n_l$. Moreover, the multiplicity $[T(n) : \Delta(m)]$ is
   at most one.
\end{lem}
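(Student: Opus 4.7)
The plan is to induct on $l$ using Donkin's tilting tensor product theorem (\cite[Proposition 2.1]{DoTiltingMods}): if $n = n_0 + p n'$ with $p-1 \leq n_0 \leq 2p-2$, then $T(n) \cong T(n_0) \otimes T(n')^{[1]}$. Since $T(n)$ is tilting it has a $\Delta$-filtration, and its $\Delta$-multiplicities are determined by its character $\chi(T(n))$. It therefore suffices to show at the character level that $\chi(T(n)) = \sum_m \chi(\Delta(m))$, with the sum ranging over the valid $m$'s and each summand appearing exactly once.

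The base case $l = 0$ is immediate, since for $n \in [0, p-1]$ we have $\Delta(n) = L(n) = T(n)$ and the only valid $m$ is $n$ itself. For the inductive step, recall the $\Delta$-structure of the fundamental tiltings: $T(p-1) = \Delta(p-1)$ is simple, while for $n_0 \in [p, 2p-2]$ there is a nonsplit short exact sequence $0 \to \Delta(2p-2-n_0) \to T(n_0) \to \Delta(n_0) \to 0$. Combining Donkin's theorem with the inductive hypothesis $\chi(T(n')) = \sum_{m'} \chi(\Delta(m'))$, the lemma reduces to the character identity
\[ \bigl(\chi(\Delta(n_0)) + \chi(\Delta(2p-2-n_0))\bigr) \cdot F(\chi(\Delta(m'))) = \chi(\Delta(n_0 + pm')) + \chi(\Delta(2p-2-n_0 + pm')) \]
for each valid $m'$, where $F$ denotes the Frobenius substitution $v \mapsto v^p$, and where both sides collapse to one term when $n_0 = p-1$.

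This character identity is the main obstacle, because each individual product $\chi(\Delta(a)) \cdot F(\chi(\Delta(m')))$ is typically \emph{not} a nonnegative combination of Weyl characters; only the paired sum on the left simplifies, reflecting the linkage principle. The plan is to clear the obstruction by multiplying both sides by $\chi(\Delta(p-1))$. On the left, one uses the clean auxiliary identity $F(\chi(\Delta(m'))) \cdot \chi(\Delta(p-1)) = \chi(\Delta((m'+1)p - 1))$, which follows from a direct expansion together with the Euclidean-division bijection $k \leftrightarrow (i,j)$ where $k = i + jp$, $0 \leq i \leq p-1$, $0 \leq j \leq m'$. On the right, one applies the Pieri rule $\chi(\Delta(r))\,\chi(\Delta(p-1)) = \sum_{k=0}^{p-1} \chi(\Delta(r+p-1-2k))$ (valid here since both $n_0 + pm'$ and $2p-2-n_0 + pm'$ lie in $[p-1, \infty)$ for $m' \geq 1$; the case $m' = 0$ is trivial). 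A short multiset-matching of the resulting indices, using the symmetry $a \leftrightarrow 2p-2-a$ together with the fact that all indices share the parity of $a$, then shows both sides agree. Since $\chi(\Delta(p-1))$ is a nonzero element of the integral domain $\Z[v, v^{-1}]$, it may be cancelled, yielding the desired identity.

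The multiplicity-one statement is then automatic: two digit choices $(m_j)$ and $(m_j')$ yield the same $m = \sum m_j p^j$ only when they agree digit-by-digit (by uniqueness of the base-$p$ expansion), and at a position with $n_j = p-1$ the two allowed values $m_j \in \{n_j, 2p-2-n_j\}$ already coincide, so each valid $m$ is counted exactly once in the sum.
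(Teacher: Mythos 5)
The paper itself contains no proof of this lemma: it is quoted from Erdmann--Henke \cite[Lemma 6]{EH} (with \cite[Proposition 2.1]{DoTiltingMods} and \cite[Lemma 1.1]{DH} as the standard inputs), so there is no internal argument to compare against. Your proof is the expected one and is essentially correct: Donkin's theorem $T(n_0+pn')\cong T(n_0)\otimes T(n')^{[1]}$ for $p-1\leqslant n_0\leqslant 2p-2$, the $\Delta$-factors $\Delta(n_0),\Delta(2p-2-n_0)$ of $T(n_0)$ in that range (with $T(p-1)=\Delta(p-1)$), and the fact that $\Delta$-multiplicities of a tilting module are read off from its character reduce everything to the two-term character identity you display, which is indeed true. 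Your route to that identity --- multiply by $\chi(\Delta(p-1))$, use $F(\chi(\Delta(m')))\,\chi(\Delta(p-1))=\chi(\Delta((m'+1)p-1))$ and the Clebsch--Gordan rule, match multisets, then cancel in the integral domain $\Z[v+v^{-1}]$ --- is valid but roundabout: writing $\chi(\Delta(m))=(v^{m+1}-v^{-m-1})/(v-v^{-1})$ verifies the identity directly in two lines.

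Two repairs are needed. The substantive one: justifying injectivity of $(m_j)\mapsto\sum_j m_jp^j$ ``by uniqueness of the base-$p$ expansion'' does not apply, because the admissible digits go up to $2p-2$; for instance $4\cdot 3^0+0\cdot 3=1\cdot 3^0+1\cdot 3$ for $p=3$, so expansions with such digits are not unique in general. The claim is nevertheless true for admissible strings: if two of them gave the same $m$, the place-wise differences lie in $\{0,\pm 2(n_j-p+1)\}$, hence are even of absolute value at most $2p-2$; halving the relation $\sum_j(m_j-m_j')p^j=0$ yields a vanishing expansion with digits of absolute value at most $p-1$, which forces all differences to vanish. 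This injectivity is exactly what is needed both for ``multiplicity at most one'' and for the inductive bookkeeping (distinct pairs consisting of a choice of first digit $n_0$ or $2p-2-n_0$ and an admissible $m'$ must give distinct $m$; the only danger is $p=2$, $n_0=2$, which this argument rules out), so it must be proved rather than asserted. A cosmetic slip: for $p\leqslant n_0\leqslant 2p-2$ the Weyl submodule of $T(n_0)$ is $\Delta(n_0)$ and the quotient is $\Delta(2p-2-n_0)$, the reverse of the extension you wrote; this is immaterial for your character-level argument.
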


Observe that in the last lemma for $m = \sum_{i = 0}^l m_i p^i$ the coefficients
are not required to satisfy $p-1 \leqslant m_i \leqslant 2p-2$ for 
$i < m$. For $n = \sum_{i = 0}^l n_i p^i$ as in the lemma there
are precisely $2^r$ natural numbers $m$ such that $\Delta(m)$ occurs
in a $\Delta$-flag of $T(n)$ where $r = \vert \{ 0 \leqslant i < l \; \vert \; 
n_i \neq p-1 \}$. 

To give an example of how this works, consider $n=15$. This is
uniquely written as $15 = 3\cdot 3^0 + 4 \cdot 3 + 0 \cdot 3^2$ (as described in
the last lemma; the last $0$ matters!) and thus $4$ standard modules occur 
(with multiplicity one) in a $\Delta$-flag of $T(15)$, namely $\Delta(m)$ for $m$ among the 
following:
\begin{align*}
   1 &= 1 \cdot 3^0 + 0 \cdot 3^1 + 0 \cdot 3^2\text{,} \\
   3 &= 3 \cdot 3^0 + 0 \cdot 3^1 + 0 \cdot 3^2\text{,} \\
   13 &= 1 \cdot 3^0 + 4 \cdot 3^1 + 0 \cdot 3^2\text{,} \\
   15 &= 3 \cdot 3^0 + 4 \cdot 3^1 + 0 \cdot 3^2\text{.} \\
\end{align*}
See \Cref{figMultTilt} at the end of the paper for the multiplicities 
of $\Delta(m)$ in $T(n)$ for $p = 3$ where each black box represents a one. 
Using \Cref{lemPCanTilt} we get for example:
\begin{alignat*}{9}
   &\pcan[3]{s}        &&= \kl{s} && && && && && && &&\\
   &\pcan[3]{st}       &&=        &&\kl{st} && && && && && &&\\
   &\pcan[3]{sts}      &&=        &&        &&\kl{sts} && && && && &&\\
   &\pcan[3]{stst}     &&=        &&\kl{st} &&\ \  +   &&\kl{stst} && && && &&\\
   &\pcan[3]{ststs}    &&= \kl{s} &&        &&\ \  +   &&          &&\kl{ststs} && && &&\\
   &\pcan[3]{ststst}   &&=        &&        &&         &&          &&           &&\kl{ststst} && &&\\
   &\pcan[3]{stststs}  &&=        &&        &&         &&          &&\kl{ststs} &&\ \ \ +       &&\kl{stststs} &&\\
   &\pcan[3]{stststst} &&=        &&        &&         &&\kl{stst} &&           &&\ \ \ +       &&             &&\kl{stststst} \\
\end{alignat*}

\def \xspace{0.5}
\def \yspace{0.5}
\def \num{41}

\begin{figure}[p]
   \centering
   \resizebox{\textwidth}{!}{
   \begin{tikzpicture}[auto, baseline=(current  bounding  box.center)]
      \pgfmathsetmacro{\max}{\num-1};
      \foreach \i in {0, 1, ..., \max} {
         \pgfmathsetmacro{\x}{\i*\xspace + \xspace/2};
         \node[labeling, label=above:\footnotesize$\i$] (\i) at (\x,0) {};
         \node[labeling, label=left:\footnotesize$\i$] (T\i) at (0,-\x) {};
         \draw[color=black!20] (\i*\xspace,\yspace) to (\i*\xspace, -\num*\yspace);
         \draw[color=black!20] (-\xspace, -\i*\yspace) to (\num*\xspace,-\i*\yspace);
      }
      
      \foreach \x/\y in {1/3, 0/4, 4/6, 3/7, 7/9, 6/10, 5/11, 4/12, 6/12, 3/13, 7/13, 
                         2/14, 1/15, 3/15, 0/16, 4/16, 10/12, 9/13, 13/15, 12/16,
                         16/18, 15/19, 14/20, 13/21,15/21, 12/22, 16/22, 11/23, 10/24, 12/24, 9/25, 13/25,
                         19/21, 18/22, 22/24, 21/25, 25/27, 24/28, 23/29, 22/30, 24/30, 21/31, 25/31,
                         20/32, 19/33, 21/33, 18/34, 22/34, 17/35, 16/36, 18/36, 15/37, 19/37, 14/38, 20/38, 13/39, 15/39, 19/39, 21/39,
                         28/30, 27/31, 31/33, 30/34, 34/36, 33/37, 32/38, 31/39, 33/39, 37/39,
                         12/40, 16/40, 18/40, 22/40, 30/40, 34/40, 36/40} {
         \pgfmathsetmacro{\Px}{\x*\xspace}
         \pgfmathsetmacro{\Py}{-\y*\yspace}
         \pgfmathsetmacro{\Qx}{(\x + 1)*\xspace}
         \pgfmathsetmacro{\Qy}{-(\y + 1)*\yspace}
         \draw[fill=black] (\Px, \Py) rectangle (\Qx, \Qy);
      }
      
      \foreach \i in {0, 1, ..., \max} {
         \pgfmathsetmacro{\Px}{\i*\xspace}
         \pgfmathsetmacro{\Py}{-\i*\yspace}
         \pgfmathsetmacro{\Qx}{(\i + 1)*\xspace}
         \pgfmathsetmacro{\Qy}{-(\i + 1)*\yspace}
         \draw[fill=black] (\Px, \Py) rectangle (\Qx, \Qy);
      }
      \draw (-\xspace,0) to (\num*\xspace,0);
      \draw (0,\yspace) to (0, -\num*\yspace);
      \draw (0,0) to (-\xspace, \yspace);
      \node[labeling, label={center:\footnotesize$n$}] (n) at (-\xspace, 0.2cm) {};
      \node[labeling, label={center:\footnotesize$m$}] (m) at (-0.2cm, \yspace) {};
   \end{tikzpicture}}
   \caption{The multiplicities of $\Delta(m)$ in $T(n)$ for $p=3$.}
   \label{figMultTilt}
\end{figure}
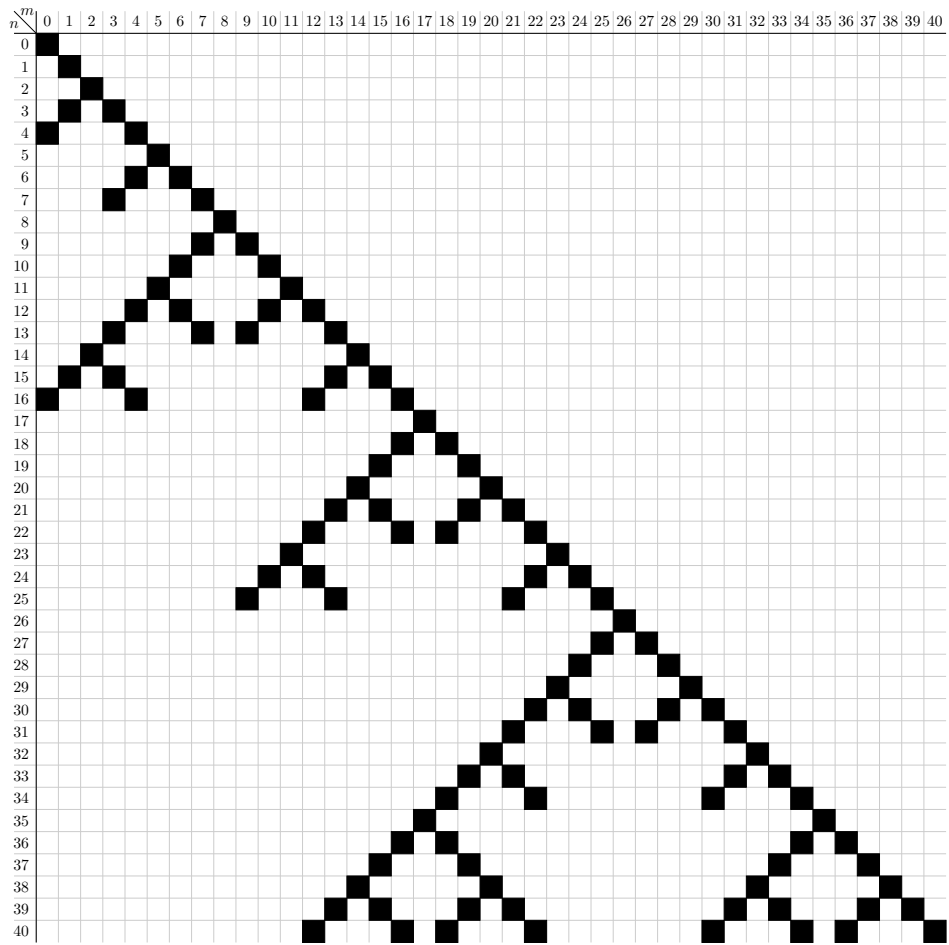

Note that even in this relatively simple example one sees a beautiful fractal-like
structure emerging!

\subsection{Types \texorpdfstring{$B_3$}{B3} and \texorpdfstring{$C_3$}{C3}}
In the Dynkin diagrams of types $B_3$ and $C_3$ we label the simple
reflections as follows:
\begin{gather*}
   B_3 :
   \begin{array}{c}
   \begin{tikzpicture}[auto, baseline=(current  bounding  box.center)]
      \draw (1,0) -- (0,0);
      \draw (0,\edgeShift) -- (-1,\edgeShift);
      \draw (0,-\edgeShift) -- (-1,-\edgeShift);
      \path (0,0) to node[Greater] (mid) {} (-1,0);
      \draw (mid.center) to +(30:\wingLen);
      \draw (mid.center) to +(330:\wingLen);
      \node [DynNode] (1) at (-1,0) {$1$};
      \node [DynNode] (2) at (0,0) {$2$};
      \node [DynNode] (3) at (1,0) {$3$};
   \end{tikzpicture} \end{array} \\
   C_3: 
   \begin{array}{c} 
      \begin{tikzpicture}[auto, baseline=(current  bounding  box.center)]
         \draw (1,0) -- (0,0);
         \draw (0,\edgeShift) -- (-1,\edgeShift);
         \draw (0,-\edgeShift) -- (-1,-\edgeShift);
         \path (-1,0) to node[Greater] (mid) {} (0,0);
         \draw (mid.center) to +(150:\wingLen);
         \draw (mid.center) to +(210:\wingLen);
         \node [DynNode] (1) at (-1,0) {$1$};
         \node [DynNode] (2) at (0,0) {$2$};
         \node [DynNode] (3) at (1,0) {$3$};
      \end{tikzpicture}
   \end{array}
\end{gather*}

The only interesting case is $p=2$. The following table 
gives an overview over all the Weyl group elements for which the $2$-canonical
basis differs from the Kazhdan-Lusztig basis. It illustrates the dependence
of the $2$-canonical basis on the type of the root system. Even though 
the combinatorics in types $B_3$ and $C_3$ are the same, the corresponding 
$2$-canonical bases are quite different.\\

\vspace*{0.2cm}
\noindent
\resizebox{\linewidth}{!}{
   \begin{tabular}{|l|c|c|}
      \hline
      & $B_3$ & $C_3$ \T \B \\
      \hline
      $\pcan[2]{212}$ & $\kl{212}$ & $\kl{212} + \kl{2}$ \T \\ 
      $\pcan[2]{121}$ & $\kl{121} + \kl{1}$ & $\kl{121}$ \\ 
      $\pcan[2]{3212}$ & $\kl{3212}$ & $\kl{3212} + \kl{32}$ \\ 
      $\pcan[2]{2123}$ & $\kl{2123}$ & $\kl{2123} + \kl{23}$ \\ 
      $\pcan[2]{1321}$ & $\kl{1321} + \kl{13}$ & $\kl{1321}$ \\ 
      $\pcan[2]{1213}$ & $\kl{1213} + \kl{13}$ & $\kl{1213}$ \\ 
      $\pcan[2]{32123}$ & $\kl{32123}$ & $\kl{32123} + \kl{232} + \kl{3}$ \\ 
      $\pcan[2]{21232}$ & $\kl{21232}$ & $\kl{21232} + \kl{232}$ \\ 
      $\pcan[2]{23212}$ & $\kl{23212}$ & $\kl{23212} + \kl{232}$ \\ 
      $\pcan[2]{21321}$ & $\kl{21321} + \kl{213}$ & $\kl{21321}$ \\ 
      $\pcan[2]{12132}$ & $\kl{12132} + \kl{132}$ & $\kl{12132}$ \\ 
      $\pcan[2]{232123}$ & $\kl{232123}$ & $\kl{232123} + (v+v^{-1}) \kl{232}$ \\ 
      $\pcan[2]{212321}$ & $\kl{212321}$ & $\kl{212321} + \kl{2321}$ \\ 
      $\pcan[2]{121321}$ & $\kl{121321} + \kl{1212} + \kl{1321} +  \kl{1213} + \kl{13}$ 
                         & $\kl{121321}$ \\ 
      $\pcan[2]{123212}$ & $\kl{123212}$ & $\kl{123212} + \kl{1232}$ \\ 
      $\pcan[2]{2123212}$ & $\kl{2123212}$ 
                          & $\kl{2123212} + \kl{21232} + \kl{23212} + \kl{232}$ \\ 
      $\pcan[2]{1212321}$ & $\kl{1212321} + \kl{12123}$ & $\kl{1212321}$ \\ 
      $\pcan[2]{1213212}$ & $\kl{1213212} + \kl{13212}$ & $\kl{1213212}$ \\ 
      $\pcan[2]{21232123}$ & $\kl{21232123}$ & $\kl{21232123} + \kl{232123}$ \\ 
      $\pcan[2]{12123212}$ & $\kl{12123212} + \kl{1212}$ & $\kl{12123212}$ \\ 
      $\pcan[2]{12132123}$ & $\kl{12132123} + \kl{132123}$ & $\kl{12132123}$ \B \\
      \hline
   \end{tabular}}
\vspace*{0.2cm}
   
The most interesting entry in the whole table occurs for type $C_3$ and the
element $232123 \in W$ where we have
\[ \pcan[2]{232123} = \kl{232123} + (v + v^{-1}) \kl{232} \]

This means that, in the decomposition of $B_{232123} \otimes_{\rO} \K$ into
indecomposable objects in $\pre{\K}{\HC}$, non-self-dual summands (i.e.
with a non-trivial grading shift) occur.

\subsection{Type \texorpdfstring{$D_4$}{D4}}
We label the simple reflections as follows:
\[ \begin{tikzpicture}[auto, baseline=(current  bounding  box.center)]
      \node [DynNode] (t) at (0,0) {$t$};
      \node [DynNode] (s) at (180:1) {$s$};
      \node [DynNode] (u) at (60:1) {$u$};
      \node [DynNode] (v) at (-60:1) {$v$};

      \draw (t) to (s);
      \draw (t) to (u);
      \draw (t) to (v);
   \end{tikzpicture} \]

It turns out that the $p$-canonical basis and the Kazhdan-Lusztig basis coincide 
for all primes except for $p = 2$. There are four elements $x \in W$ with 
$\pcan[2]{x} \ne \kl{x}$. If $x = suvtsuv$ then we have
\[
   \pcan[2]{t_1xt_2} = \kl{t_1xt_2} + \kl{t_1suvt_2}
\]
for $t_1, t_2 \in \langle t \rangle$. We will give some more details on how to
calculate $\pcan[2]{x}$. We start out by decomposing the corresponding Bott-Samelson
object into indecomposable objects in $\pre{\K}{\HC}$ to get
\[ \std{s} \std{u} \std{v} \std{t} \std{s} \std{u} \std{v}
   = \kl{x} + (v^{-2} + 3 + v^2) \kl{suv} \text{.}
\]
As subexpressions of $(s,u,v,t,s,u,v)$ expressing $suv$ we get
\begin{alignat*}{2}
   &(U1,U1,U1,U0,D0,D0,D0) \quad && \text{of defect } -2 \text{,} \\
   &(U1,U1,U0,U0,D0,D0,U1) && \text{of defect } 0 \text{,} \\
   &(U1,U0,U1,U0,D0,U1,D0) && \text{of defect } 0 \text{,} \\
   &(U0,U1,U1,U0,U1,D0,D0) && \text{of defect } 0 \text{,} \\
   &(U1,U0,U0,U0,D0,U1,U1) && \text{of defect } 2 \text{,} \\
   &(U0,U1,U0,U0,U1,D0,U1) && \text{of defect } 2 \text{,} \\
   &(U0,U0,U1,U0,U1,U1,D0) && \text{of defect } 2 \text{,} \\
   &(U0,U0,U0,U0,U1,U1,U1) && \text{of defect } 4 \text{.}
\end{alignat*}

The light leaf morphism of degree $-2$ pairs with the three light leaf morphisms
of degree $2$ to give the matrix
\[
\left ( 
   \begin{matrix} 
      -1 \\ 
      -1 \\ 
      -1 

   \end{matrix} \right ) \text{.}
\]

The light leaf morphisms corresponding to the subexpressions of defect $0$
are the following:
\[
   \begin{tikzpicture}[auto]
      \draw[sline] (\xdist,0) -- (\xdist,\ydist) to[out=90, in=300]  (-\xdist,3*\ydist) -- (-\xdist,4*\ydist);

      \draw[sline] (-3*\xdist,0) -- (-3*\xdist,\ydist) to[out=90, in=240] (-\xdist,3*\ydist) -- (-\xdist,4*\ydist);

      \draw[uline] (2*\xdist,0) -- (2*\xdist,\ydist) to[out=90, in=300]  (0,3*\ydist) -- (0,4*\ydist);

      \draw[uline] (-2*\xdist,0) -- (-2*\xdist,\ydist) to[out=90, in=240] (0,3*\ydist) -- (0,4*\ydist);

      \draw[line=black] (-\xdist,0) -- (-\xdist,\ydist);

      \draw[dot=black] (-\xdist,\ydist) circle (\circSize);

      \draw[line=black] (3*\xdist,0) -- (3*\xdist,\ydist) to[out=90, in=300] (\xdist,3*\ydist) -- (\xdist,4*\ydist);

      \draw[tline] (0,0) -- (0,\ydist);

      \draw[tdot] (0,\ydist) circle (\circSize);

   \end{tikzpicture} \qquad
   \begin{tikzpicture}[auto]
      \draw[sline] (\xdist,0) -- (\xdist,\ydist) to[out=90, in=300]  (-\xdist,3*\ydist) -- (-\xdist,4*\ydist);

      \draw[sline] (-3*\xdist,0) -- (-3*\xdist,\ydist) to[out=90, in=240] (-\xdist,3*\ydist) -- (-\xdist,4*\ydist);

      \draw[uline] (2*\xdist,0) -- (2*\xdist,\ydist) to[out=90, in=300]  (0,3*\ydist) -- (0,4*\ydist);

      \draw[uline] (-2*\xdist,0) -- (-2*\xdist,\ydist);
      \draw[udot] (-2*\xdist,\ydist) circle (\circSize);
      \draw[line=black] (-\xdist,0) -- (-\xdist,\ydist) to[out=90, in=240] (\xdist, 3*\ydist);
      \draw[line=black] (3*\xdist,0) -- (3*\xdist,\ydist) to[out=90, in=300] (\xdist,3*\ydist) -- (\xdist,4*\ydist);

      \draw[tline] (0,0) -- (0,\ydist);

      \draw[tdot] (0,\ydist) circle (\circSize);

   \end{tikzpicture} \qquad
   \begin{tikzpicture}[auto]
      \draw[sline] (\xdist,0) -- (\xdist,\ydist) to[out=90, in=300]  (-\xdist,3*\ydist) -- (-\xdist,4*\ydist);

      \draw[sline] (-3*\xdist,0) -- (-3*\xdist,\ydist);
      \draw[sdot] (-3*\xdist,\ydist) circle (\circSize);
      \draw[uline] (2*\xdist,0) -- (2*\xdist,\ydist) to[out=90, in=300]  (0,3*\ydist) -- (0,4*\ydist);

      \draw[uline] (-2*\xdist,0) -- (-2*\xdist,\ydist) to[out=90, in=240] (0,3*\ydist) -- (0,4*\ydist);
      \draw[line=black] (-\xdist,0) -- (-\xdist,\ydist) to[out=90, in=240] (\xdist, 3*\ydist);
      \draw[line=black] (3*\xdist,0) -- (3*\xdist,\ydist) to[out=90, in=300] (\xdist,3*\ydist) -- (\xdist,4*\ydist);

      \draw[tline] (0,0) -- (0,\ydist);

      \draw[tdot] (0,\ydist) circle (\circSize);

   \end{tikzpicture}
\]

Pairing them gives the following degree $0$ piece of the intersection form:
\[
\left( 
   \begin{matrix} 
      0 & -1 & -1 \\ 
      -1 & 0 & -1 \\ 
      -1 & -1 & 0

   \end{matrix} \right) \text{.}
\]
Note that the determinant of this matrix is $-2$ and its rank in characteristic $2$
is $2$. Therefore $\pre{k}B_{suvtsuv} \otimes_{\rO} \K$ decomposes as
\[ \pre{\K}B_{suvtsuv} \oplus \pre{\K}B_{suv} \]
giving the result we stated above. The geometry of this example is discussed
in the appendix of \cite{BW}. Note that all calculations presented in this section
can also be carried out using the formula in the nil Hecke ring 
(see \cite[\S 6.2]{HW}).

\subsection{Type \texorpdfstring{$A_n$}{An}}

According to \cite{BW}, the $p$-canonical basis and the Kazhdan-Lusztig basis
coincide for all primes $p$ for $n < 7$. Thus, we will describe the case $n = 7$
where the situation is quite remarkable.

For all primes $p \neq 2$ the $p$-canonical basis and the Kazhdan-Lusztig basis 
agree. We have $\pcan[2]{x} \ne \kl{x}$ for exactly 38 out of 40320
elements in $S_8$ and these examples fall into four classes. 

In the following we will denote for a subset $I \subseteq S$ the corresponding 
parabolic subgroup by $W_I = \langle s \in I \rangle \subseteq W$. If $W_I$ is 
finite, its longest element will be denoted by $w_I$. A permutation $\phi \in S_8$
will be displayed as a string $\phi(1) \phi(2) \dots \phi(8)$.

\emph{The Kashiwara-Saito singularity} (\cite{KaSa}): This corresponds to the
permutation $w = 62845173.$ We have
\[
   \pcan[2]{w} = \kl{w} + \kl{w_I}
\]
with $I = \{1,3,4,5,7\}$. There is a cluster of $16$ elements around the 
Kashiwara-Saito singularity described as follows. If we let $J = \{ 2,6 \}$, 
then we have
\[
   \pcan[2]{uwv} = \kl{uwv} + \kl{uw_Iv}.
\]
for all $u, v \in W_J$ unless $u = v = w_J$ in which case
\[
   \pcan[2]{w_J w w_J} = \kl{w_J w w_J} + \kl{w_J w_I w_J} + \kl{w_K}
\]
where $K = \{ 1,2,3,5,6,7\}$.

\emph{The Hexagon singularity} (Braden's example in the appendix of \cite{BW}): 
Consider the permutation $w = 46718235$. We have
\[
   \pcan[2]{w} = \kl{w} + \kl{w_I}
\]
with $I = \{ 2, 3, 5, 6\}$. In this case we get a cluster of size $4$. 
For any $u, v \in \langle s_4 \rangle$ we have
\[
   \pcan[2]{uwv} = \kl{uwv} + \kl{uw_Iv}
\]
unless $u = v = s_4$ in which case
\[
   \pcan[2]{s_4ws_4} = \kl{s_4ws_4} + \kl{s_4w_Is_4} + \kl{w_K}
\]
where $K = \{1,3,4,5,7\}$. 

For the Kashiwara-Saito singularity and the Hexagon singularity
the calculation of the the local intersection form of $\expr{w}$ at $w_I$ using
the formula in the nil Hecke ring can be found in \cite[\S6.1 and \S 6.2]{HW}.
In both cases one can find $\expr{w}$, a reduced expression for $w$,
such that this local intersection form is a $1 \times 1$ matrix.

\emph{The waterfall:} Consider the permutation $w_1 = 67283415$.
Then we have
\[
   \pcan[2]{w_1u} = \kl{w_1u} + \kl{w_Iu}.
\]
for $I = \{1,2,4,5,6\}$ and $w_J \ne u \in W_J$ with $J = \{ 3,7\}$.

Similarly, if we let $w_2 = 57813462$, then we have
\[
   \pcan[2]{vw_2} = \kl{vw_2} + \kl{vw_I'}
\]
where $I' = \{ 2,3,4,6,7\}$ and $w_{J'} \ne v \in W_J$ with $J = \{1,5\}$.

The situation becomes more complicated due to the fact that for $u = w_J$ and $v =
w_{J'}$ we have $w_1u = vw_2 =: w$. In this case we get
\[
   \pcan[2]{w} = \kl{w} + \kl{w_Iu} + \kl{vw_I'}.
\]

Note that, unlike the Kashiwara-Saito and hexagon permutations
discussed above, the clusters containing $w_1$ and $w_2$ are neither
swapped nor fixed by the automorphism $s_i \mapsto s_{8-i}$ and thus
by applying the graph automorphism one obtains another seven elements
for which $\pcan[2]{x} \ne \kl{x}$. Hence the two ``waterfall'' clusters 
contain $14$ elements in total.

\emph{The basket:} Consider the permutation
\[
w = 84627351.
\]
Then for all $u, v \in \langle s_4 \rangle$ one has
\[
{}^2 \kl{uwv} = \kl{uwv} + \kl{uw_Iv}
\]
where $I = \{1,2,3,5,6,7\}$.


\begin{thebibliography}{JMW14b}

\bibitem[AR13]{ARi}
Pramod~N. Achar and Laura Rider, \emph{Parity sheaves on the affine
  {G}rassmannian and the {M}irkovi\'c-{V}ilonen conjecture}, 2013,
  arXiv:1305.1684.

\bibitem[AR14]{ARModPervShI}
Pramod~N. Achar and Simon Riche, \emph{Modular perverse sheaves on flag
  varieties {I}: tilting and parity sheaves}, 2014, arXiv:1401.7245.

\bibitem[Ari96]{Ar}
Susumu Ariki, \emph{On the decomposition numbers of the {H}ecke algebra of
  {$G(m,1,n)$}}, J. Math. Kyoto Univ. \textbf{36} (1996), no.~4, 789--808.

\bibitem[BBD82]{BBD}
A.~A. Be{\u\i}linson, J.~Bernstein, and P.~Deligne, \emph{Faisceaux pervers},
  Analysis and topology on singular spaces, {I} ({L}uminy, 1981), Ast\'erisque,
  vol. 100, Soc. Math. France, Paris, 1982, pp.~5--171.

\bibitem[BL94]{BL}
Joseph Bernstein and Valery Lunts, \emph{Equivariant sheaves and functors},
  Lecture Notes in Mathematics, vol. 1578, Springer-Verlag, Berlin, 1994.

\bibitem[dCM09]{dCMDecompThmSurvey}
Mark Andrea~A. de~Cataldo and Luca Migliorini, \emph{The decomposition theorem,
  perverse sheaves and the topology of algebraic maps}, Bull. Amer. Math. Soc.
  (N.S.) \textbf{46} (2009), no.~4, 535--633.

\bibitem[Del80]{DWeil2}
Pierre Deligne, \emph{La conjecture de {W}eil. {II}}, Inst. Hautes \'Etudes
  Sci. Publ. Math. (1980), no.~52, 137--252.

\bibitem[DH05]{DH}
Stephen Doty and Anne Henke, \emph{Decomposition of tensor products of modular
  irreducibles for {${\rm SL}\sb 2$}}, Q. J. Math. \textbf{56} (2005), no.~2,
  189--207.

\bibitem[Don93]{DoTiltingMods}
Stephen Donkin, \emph{On tilting modules for algebraic groups}, Math. Z.
  \textbf{212} (1993), no.~1, 39--60.

\bibitem[EH02]{EH}
Karin Erdmann and Anne Henke, \emph{On {R}ingel duality for {S}chur algebras},
  Math. Proc. Cambridge Philos. Soc. \textbf{132} (2002), no.~1, 97--116.

\bibitem[EK10]{EKh}
Ben Elias and Mikhail Khovanov, \emph{Diagrammatics for {S}oergel categories},
  Int. J. Math. Math. Sci. (2010), Art. ID 978635, 58.

\bibitem[Eli13]{E3}
B.~Elias, \emph{The two-color {S}oergel calculus}, 2013, arXiv:1308.6611.

\bibitem[EW12]{EW3}
B.~Elias and G.~Williamson, \emph{Diagrammatics for {C}oxeter groups and their
  braid groups}, 2012, to appear in Quantum Topology, arXiv:1405.4928.

\bibitem[EW13]{EW2}
B.~Elias and G.~Williamson, \emph{{S}oergel calculus}, 2013, arXiv:1309.0865.

\bibitem[EW14]{EW1}
Ben Elias and Geordie Williamson, \emph{The {H}odge theory of {S}oergel
  bimodules}, Ann. of Math. (2) \textbf{180} (2014), no.~3, 1089--1136.

\bibitem[FW14]{FW}
Peter Fiebig and Geordie Williamson, \emph{Parity sheaves, moment graphs and
  the {$p$}-smooth locus of {S}chubert varieties}, Ann. Inst. Fourier
  (Grenoble) \textbf{64} (2014), no.~2, 489--536.

\bibitem[G{\"o}r10]{Goer}
Ulrich G{\"o}rtz, \emph{Affine {S}pringer fibers and affine {D}eligne-{L}usztig
  varieties}, Affine flag manifolds and principal bundles, Trends Math.,
  Birkh\"auser/Springer Basel AG, Basel, 2010, pp.~1--50.

\bibitem[Gro99]{GrojAffineSLp}
I.~Grojnowski, \emph{Affine $\mathfrak{sl}_p$ controls the representation
  theory of the symmetric group and related {H}ecke algebras}, 1999,
  arXiv:math/9907129v1.

\bibitem[Hum90]{Hum}
James~E. Humphreys, \emph{Reflection groups and {C}oxeter groups}, Cambridge
  Studies in Advanced Mathematics, vol.~29, Cambridge University Press,
  Cambridge, 1990.

\bibitem[HW15]{HW}
Xuhua He and Geordie Williamson, \emph{{S}oergel calculus and {S}chubert
  calculus}, 2015, arXiv:1502.04914.

\bibitem[Jam90]{JaConj}
Gordon James, \emph{The decomposition matrices of {${\rm GL}\sb n(q)$} for
  {$n\le 10$}}, Proc. London Math. Soc. (3) \textbf{60} (1990), no.~2,
  225--265.

\bibitem[JMW14a]{JMW1}
Daniel Juteau, Carl Mautner, and Geordie Williamson, \emph{Parity sheaves}, J.
  Amer. Math. Soc. \textbf{27} (2014), no.~4, 1169--1212.

\bibitem[JMW14b]{JMW3}
Daniel Juteau, Carl Mautner, and Geordie Williamson, \emph{Parity sheaves and
  tilting modules}, 2014, to appear in Annales scientifiques de l'\'Ecole
  normale sup\'erieure, arXiv:1403.1647.

\bibitem[KL79]{KL}
David Kazhdan and George Lusztig, \emph{Representations of {C}oxeter groups and
  {H}ecke algebras}, Invent. Math. \textbf{53} (1979), no.~2, 165--184.

\bibitem[KL80]{KL2}
David Kazhdan and George Lusztig, \emph{Schubert varieties and {P}oincar\'e
  duality}, Geometry of the {L}aplace operator ({P}roc. {S}ympos. {P}ure
  {M}ath., {U}niv. {H}awaii, {H}onolulu, {H}awaii, 1979), Proc. Sympos. Pure
  Math., XXXVI, Amer. Math. Soc., Providence, R.I., 1980, pp.~185--203.

\bibitem[KS97]{KaSa}
Masaki Kashiwara and Yoshihisa Saito, \emph{Geometric construction of crystal
  bases}, Duke Math. J. \textbf{89} (1997), no.~1, 9--36.

\bibitem[Kum02]{KuKMGrps}
Shrawan Kumar, \emph{Kac-{M}oody groups, their flag varieties and
  representation theory}, Progress in Mathematics, vol. 204, Birkh\"auser
  Boston, Inc., Boston, MA, 2002.

\bibitem[Lam01]{Lam}
T.~Y. Lam, \emph{A first course in noncommutative rings}, second ed., Graduate
  Texts in Mathematics, vol. 131, Springer-Verlag, New York, 2001.

\bibitem[Lib08]{Li1}
Nicolas Libedinsky, \emph{Sur la cat\'egorie des bimodules de {S}oergel}, J.
  Algebra \textbf{320} (2008), no.~7, 2675--2694.

\bibitem[Lib10]{Li3}
Nicolas Libedinsky, \emph{Presentation of right-angled {S}oergel categories by
  generators and relations}, J. Pure Appl. Algebra \textbf{214} (2010), no.~12,
  2265--2278.

\bibitem[Lib15]{LiLLLusConj}
Nicolas Libedinsky, \emph{Light leaves and {L}usztig's conjecture}, Adv. Math.
  \textbf{280} (2015), 772--807.

\bibitem[LLT96]{LLT}
Alain Lascoux, Bernard Leclerc, and Jean-Yves Thibon, \emph{Hecke algebras at
  roots of unity and crystal bases of quantum affine algebras}, Comm. Math.
  Phys. \textbf{181} (1996), no.~1, 205--263.

\bibitem[Lus80]{LuConj}
George Lusztig, \emph{Some problems in the representation theory of finite
  {C}hevalley groups}, The {S}anta {C}ruz {C}onference on {F}inite {G}roups
  ({U}niv. {C}alifornia, {S}anta {C}ruz, {C}alif., 1979), Proc. Sympos. Pure
  Math., vol.~37, Amer. Math. Soc., Providence, R.I., 1980, pp.~313--317.

\bibitem[Lus83]{LuQAna}
George Lusztig, \emph{Singularities, character formulas, and a {$q$}-analog of
  weight multiplicities}, Analysis and topology on singular spaces, {II}, {III}
  ({L}uminy, 1981), Ast\'erisque, vol. 101, Soc. Math. France, Paris, 1983,
  pp.~208--229.

\bibitem[Mak15]{Mak}
Ruslan Maksimau, \emph{Canonical basis, {KLR} algebras and parity sheaves}, J.
  Algebra \textbf{422} (2015), 563--610.

\bibitem[MR15]{MR}
Carl Mautner and Simon Riche, \emph{Exotic tilting sheaves, parity sheaves on
  affine {G}rassmannians, and the {M}irkovic-{V}ilonen conjecture}, 2015,
  arXiv:1501.07369v2.

\bibitem[MV07]{MV}
I.~Mirkovi{\'c} and K.~Vilonen, \emph{Geometric {L}anglands duality and
  representations of algebraic groups over commutative rings}, Ann. of Math.
  (2) \textbf{166} (2007), no.~1, 95--143.

\bibitem[Rou12]{RoKLR}
Rapha{\"e}l Rouquier, \emph{Quiver {H}ecke algebras and 2-{L}ie algebras},
  Algebra Colloq. \textbf{19} (2012), no.~2, 359--410.

\bibitem[RW15]{RW}
Simon Riche and Geordie Williamson, \emph{Tilting modules and the $p$-canonical
  basis}, 2015, in preparation.
  
\bibitem[Sai90]{SaDecompThm}
Morihiko Saito, \emph{Decomposition theorem for proper {K}\"ahler morphisms},
  Tohoku Math. J. (2) \textbf{42} (1990), no.~2, 127--147.

\bibitem[SGA70]{SGA3}
\emph{Sch\'emas en groupes. {III}: {S}tructure des sch\'emas en groupes
  r\'eductifs}, S\'eminaire de G\'eom\'etrie Alg\'ebrique du Bois Marie 1962/64
  (SGA 3). Dirig\'e par M. Demazure et A. Grothendieck. Lecture Notes in
  Mathematics, Vol. 153, Springer-Verlag, Berlin-New York, 1970.

\bibitem[Soe97]{S2}
Wolfgang Soergel, \emph{Kazhdan-{L}usztig-{P}olynome und eine {K}ombinatorik
  f\"ur {K}ipp-{M}oduln}, Represent. Theory \textbf{1} (1997), 37--68
  (electronic).

\bibitem[Soe98]{S3}
Wolfgang Soergel, \emph{Combinatorics of {H}arish-{C}handra modules},
  Representation theories and algebraic geometry ({M}ontreal, {PQ}, 1997), NATO
  Adv. Sci. Inst. Ser. C Math. Phys. Sci., vol. 514, Kluwer Acad. Publ.,
  Dordrecht, 1998, pp.~401--412.

\bibitem[Soe00]{SIntCohPosChar}
Wolfgang Soergel, \emph{On the relation between intersection cohomology and
  representation theory in positive characteristic}, J. Pure Appl. Algebra
  \textbf{152} (2000), no.~1-3, 311--335, Commutative algebra, homological
  algebra and representation theory (Catania/Genoa/Rome, 1998).

\bibitem[Soe01]{SLanglands}
Wolfgang Soergel, \emph{{L}anglands' philosophy and {K}oszul duality},
  Algebra---representation theory ({C}onstanta, 2000), NATO Sci. Ser. II Math.
  Phys. Chem., vol.~28, Kluwer Acad. Publ., Dordrecht, 2001, pp.~379--414.

\bibitem[Soe07]{S4}
Wolfgang Soergel, \emph{Kazhdan-{L}usztig-{P}olynome und unzerlegbare
  {B}imoduln \"uber {P}olynomringen}, J. Inst. Math. Jussieu \textbf{6} (2007),
  no.~3, 501--525.

\bibitem[Spr82]{SpIntCoh}
T.~A. Springer, \emph{Quelques applications de la cohomologie d'intersection},
  Bourbaki {S}eminar, {V}ol. 1981/1982, Ast\'erisque, vol.~92, Soc. Math.
  France, Paris, 1982, pp.~249--273.

\bibitem[Spr89]{SpLinAlgGrps}
T.~A. Springer, \emph{Linear algebraic groups}, Algebraic geometry, 4
  ({R}ussian), Itogi Nauki i Tekhniki, Akad. Nauk SSSR, Vsesoyuz. Inst. Nauchn.
  i Tekhn. Inform., Moscow, 1989, Translated from the English, pp.~5--136,
  310--314, 315.

\bibitem[VV11]{VV}
M.~Varagnolo and E.~Vasserot, \emph{Canonical bases and {KLR}-algebras}, J.
  Reine Angew. Math. \textbf{659} (2011), 67--100.

\bibitem[VW13]{VW}
Kari Vilonen and Geordie Williamson, \emph{Characteristic cycles and
  decomposition numbers}, Math. Res. Lett. \textbf{20} (2013), no.~2, 359--366.

\bibitem[WB12]{BW}
Geordie Williamson and Tom Braden, \emph{Modular intersection cohomology
  complexes on flag varieties}, Math. Z. \textbf{272} (2012), no.~3-4,
  697--727.

\bibitem[Wil12]{WParityOberwolfach}
Geordie Williamson, \emph{Some examples of parity sheaves}, March 2012,
  Oberwolfach report for the meeting ``Enveloping Algebras and Geometric Representation Theory'',
	available under: http://people.mpim-bonn.mpg.de/geordie/GWilliamson-EnvAlg.pdf.

\bibitem[Wil13]{WTorsion}
Geordie Williamson, \emph{Schubert calculus and torsion explosion}, 2013,
  arXiv:1309.5055.

\bibitem[Wil14]{WRedCharVarTypeA}
Geordie Williamson, \emph{A reducible characteristic variety in type {$A$}},
  2014, to appear in the proceedings of Vogan's birthday conference,
  arXiv:1405.3479.

\end{thebibliography}

\def\cprime{$'$}
\providecommand{\bysame}{\leavevmode\hbox to3em{\hrulefill}\thinspace}

\end{document}